\newcommand{\noun}[1]{\textsc{#1}}
\numberwithin{equation}{section}
\numberwithin{figure}{section}
\theoremstyle{plain}
\newtheorem{thm}{\protect\theoremname}
\newtheorem{thm}{\protect\theoremname}[chapter]
  \theoremstyle{definition}
  \newtheorem{defn}[thm]{\protect\definitionname}
  \theoremstyle{remark}
  \newtheorem{rem}[thm]{\protect\remarkname}
  \theoremstyle{remark}
  \theoremstyle{plain}
  \newtheorem{prop}[thm]{\protect\propositionname}
  \theoremstyle{plain}
  \newtheorem{fact}[thm]{\protect\factname}
  \theoremstyle{plain}
  \newtheorem{lem}[thm]{\protect\lemmaname}
  \theoremstyle{plain}
  \newtheorem{cor}[thm]{\protect\corollaryname}
  \theoremstyle{remark}
  \newtheorem*{claim*}{\protect\claimname}
  \theoremstyle{remark}
  \providecommand{\claimname}{Claim}
  \providecommand{\corollaryname}{Corollary}
  \providecommand{\definitionname}{Definition}
  \providecommand{\factname}{Fact}
  \providecommand{\lemmaname}{Lemma}
  \providecommand{\propositionname}{Proposition}
  \providecommand{\remarkname}{Remark}
  \providecommand{\examplename}{Example}
\providecommand{\theoremname}{Theorem}
\providecommand{\examplename}{Example}
\begin{document}
\global\long\def\Rr{\mathbb{R}}
 \global\long\def\dim#1{\mathrm{d}\left(#1\right)}
 \global\long\def\cl#1{\mathrm{cl}\left(#1\right)}
 \global\long\def\gcl#1{\mathrm{gcl}\left(#1\right)}
 \global\long\def\Aut{\mathrm{Aut}}
 \global\long\def\Autf{\mathrm{Autf}}
 \global\long\def\acleq{\mbox{\ensuremath{\mathrm{acl}}\ensuremath{\ensuremath{^{eq}}}}}
 \global\long\def\bdd{\mathrm{Bdd}}
 \global\long\def\FK#1#2{\left\langle {#1\atop #2}\right\rangle }
 \global\long\def\PK#1#2{\left({#1\atop #2}\right)}
\global\long\def\Age{\mathrm{Age}}

\title[AUT-Groups of generic structures: (extreme) amenability]{Automorphism Groups of Generic Structures: Extreme Amenability and Amenability}
\authors{
\author{Zaniar Ghadernezhad}
\address{ School of Mathematics\\ Institute for Research in Fundamental Sciences (IPM)\\ P.O. Box 19395-5746\\ Tehran, Iran.}
\email{zaniar.gh@gmail.com}
\author{Hamed Khalilian}
\address{ Department of Mathematical Science\\ Tarbiat Modares University\\ Jalal Ale Ahmad Highway\\ P.O.Box 14115-111\\ Tehran, Iran.}
\email{hamedkhalilian2004@gmail.com}
\author{Massoud Pourmahdian}
\address{ School of Mathematics\\ Institute for Research in Fundamental Sciences (IPM)\\ P.O. Box 19395-5746\\ Tehran, Iran\\ and \\ Department of Mathematics and CS\\ Amirkabir University of Technology\\ P.O.Box 15875-4413 \\ Tehran, Iran.   }
\email{pourmahd@ipm.ir}
}
\maketitle

\begin{abstract}
We investigate correspondences \sloppy between extreme amenability and amenability of automorphism groups
of Fra\"iss\'e-Hrushovski generic structures that are obtained from
smooth classes, and their Ramsey type properties of their smooth classes,
similar to \cite{KPT,TatchMoore2011}. In particular, we focus on
some Fra\"iss\'e-Hrushovski generic structures that are obtained
from pre-dimension functions. Using these correspondences, we prove that automorphism groups
of ordered Hrushovski generic graphs are not extremely amenable in
both cases of collapsed and uncollapsed. Moreover, we prove that automorphism
groups of Fra\"iss\'e-Hrushovski generic structures that are obtained
from pre-dimension functions with rational coefficients are not amenable.
\end{abstract}

\maketitle
\tableofcontents{}

\section{Introduction}

An extensive research has been devoted to studying dynamical properties
of automorphism groups of the Fra\"iss\'e-limit of a class of finite
structures satisfying joint embedding and amalgamation properties.
Suppose $\mathcal{K}$ is a class of finite structures in a relational
language $\mathfrak{L}$ with the joint embedding (JEP), the amalgamation
(AP) and the hereditary (HP) properties. It is well-known that there
exits a unique ultra-homogeneous countable structure $\mathbf{M}$ whose class of finite
substructures up to isomorphism is $\mathcal{K}$
(see \cite{MR1462612,KPT} for more information). The structure $\mathbf{M}$
is called the Fra\"iss\'e-limit of $\mathcal{K}$.

A rich model theoretic studies has been developed for understanding
the first-order theory of $\mathbf{M}$. In particular, the automorphism
groups of Fra\"iss\'e-limit structures have been recently of central
attention. It is well-known that the
automorphism group of $\mathbf{M}$, denoted by $\Aut\left(\mathbf{M}\right)$,
is a Polish closed subgroup of the permutation group of its underlying
set i.e. it can be seen as closed subgroup of $S_{\omega}$. A good survey for various kind of questions and results in the topic can be found in \cite{MacS}.

In the seminal paper \cite{KPT} of Kechris, Pestov and Todorcevic
a close correspondence between extreme amenability of $\Aut\left(\mathbf{M}\right)$
and certain combinatorial property of the class $\mathcal{K}$, called
the \emph{Ramsey property}, has been discovered. Let $G$ be a topological
group. A continuous action $\Gamma$ of $G$ on a compact Hausdorff
space $X$ is called a \emph{$G$-flow}. Group $G$ is called \emph{extremely
amenable} if every $G$-flow $\left(G,\Gamma,X\right)$ has a fix
point in $X$.

In \cite{KPT}, they have shown that the automorphism group of an
ordered Fra\"iss\'e-limit structure $\mathbf{M}$ is extremely amenable
if and only if its ordered Fra\"iss\'e-class has the Ramsey property.
Later in \cite{TatchMoore2011}, a connection has been found between
amenability of $\Aut\left(\mathbf{M}\right)$ and another combinatorial
property called the \emph{convex Ramsey property.} A Hausdorff topological
group $G$ is \emph{amenable} if every $G$-flow $\left(G,\Gamma,X\right)$
supports an $G$-invariant Borel probability measure on $X$. It has
been proved in \cite{TatchMoore2011} that $\Aut\left(\mathbf{M}\right)$
is amenable if and only if $\mathcal{K}$ has the convex Ramsey property.

\sloppy Our paper follows similar paths for adopting their line of research for Fra\"iss\'e-Hrushovski limits of smooth classes. The
Fra\"iss\'e-Hrushovski limits of smooth classes includes the original
construction of Hrushovski of CM-trivial strongly minimal sets \cite{Hrunew},
which are very important structures model-theoretically, as well as
the original Fra\"iss\'e-limits of finite structures. A class of
(finite) structures $\mathcal{K}$ together with a partial ordering
$\leqslant$ is called a \emph{smooth class} if for every $A,A_{1},A_{2}\in\mathcal{K}$
with $A_{1},A_{2}\subseteq A$ whenever $A_{1}\leqslant A$, it follows
that $A_{1}\cap A_{2}\leqslant A_{2}$ (see Definition $\ref{def:smooth}$).
It is worth noting that the notion of substructure satisfies this
condition. Similar to the Fra\"iss\'e-limit case, one can show that
for a smooth class $\left(\mathcal{K},\leqslant\right)$ with HP and the adopted
JEP and AP, there is a $\left(\mathcal{K},\leqslant\right)$-\emph{generic structure} (see Proposition $\ref{prop:sgen}$). A natural question is
to verify (extreme) amenability of the automorphism group of a $\left(\mathcal{K},\leqslant\right)$-generic
structure.

In this paper in Section $\ref{sec:2}$, firstly we show that indeed
a similar correspondence of \cite{KPT} between extreme amenability
of the automorphism group of a $\left(\mathcal{K},\leqslant\right)$-generic
structure, and a modified definition of Ramsey property for $\left(\mathcal{K},\leqslant\right)$
is valid. Later, this correspondence enables us to show that for each $\alpha\geq1$ the automorphism
groups of ordered ab-initio generic graphs $\mathbf{M}_{\alpha}$, ordered collapsed generic graphs $\mathbf{M}_\alpha^\mu$ and ordered $\omega$-categorical generics $\mathbf{M}_\alpha^{\mathsf f}$, are not extremely amenable.

In Section $\ref{sec:3}$, we prove a similar correspondence of \cite{TatchMoore2011}
between amenability of the automorphism group of a $\left(\mathcal{K},\leqslant\right)$-generic
structure, and again a modified version of convex Ramsey property for
$\left(\mathcal{K},\leqslant\right)$. This helps us, in Section $\ref{sec:4}$,
to rule out the amenability of the automorphism group of the ab-initio
generic structures that are obtained from pre-dimension functions
with rational coefficients. However, the amenability question of the
ab-initio generic structures that are obtained from pre-dimension
functions with irrational coefficients and $\omega$-categorical Hrushovski
generic structures remain unanswered in this manuscript\footnote{ After the earlier version of this paper was uploaded in ArXiv, David M. Evans in an email correspondence informed us that he can show, using a different method, the automorphism groups of generic structures that are obtained from pre-dimension
functions with irrational coefficients and the $\omega$-categorical generic structures are not amenable.}.

\noindent\textit{Acknowledgement:\/} The authors would like to thank the anonymous referee for the encouraging comments and the thoughtful suggestions.

\section{Extreme amenability of automorphism groups of generic structures}

\label{sec:2}

In \cite{KPT}, the general correspondence between the extreme amenability
of the automorphism group of an ordered Fra\"iss\'e structure and
the Ramsey property of its finite substructures has been discovered.
In this section, in Theorem $\ref{rams}$, we prove that indeed a
similar correspondence for the automorphism group of a generic structure
and the Ramsey property of its $\leqslant$-closed finite substructures
holds. Below some backgrounds about the smooth classes and Fra\"iss\'e-Hrushovski
limits is presented.

\subsection{Background}

\subsubsection{Smooth class}
\begin{defn}
\label{def:smooth} Let $\mathcal{\mathfrak{L}}$ be a finite relational
language and $\mathcal{K}$ be a class of $\mathfrak{L}$-structures
which is closed under isomorphism and substructure. Let $\leqslant$
be a reflexive and transitive relation on elements of $A\subseteq B$
of $\mathcal{K}$ and moreover, invariant under $\mathfrak{L}$-embeddings
such that it has the following properties:
\begin{enumerate}
\item $\emptyset\in\mathcal{K}$, and  $\emptyset\leqslant A$ for all $A\in\mathcal{\mathcal{K}}$;
\item If $A\subseteq B^{\prime}\subseteq B$, then $A\leqslant B$ implies
that $A\leqslant B^{\prime}$;
\item If $A,A_{1},A_{2}\in\mathcal{K}$ and $A_{1},A_{2}\subseteq A$, then
$A_{1}\leqslant A$ implies $A_{1}\cap A_{2}\leqslant A_{2}$.
\end{enumerate}
The class $\mathcal{K}$ together with the relation $\leqslant$ is
called a \emph{smooth class}. For $A,B\in\mathcal{\mathcal{K}}$ if $A\leqslant B$, then we say
that $A$ is\emph{ $\leqslant$-closed substructure} of $B$, or simply
$A$ is\emph{ $\leqslant$-closed }in $B$. Moreover, if $N$
is an infinite $\mathfrak{L}$-structure such that $A\subseteq N$,
we denote $A\leqslant N$ whenever $A\leqslant B$ for every
finite substructure $B$ of $N$ that contains $A$. We say an embedding
$\Gamma$ of $A$ into $N$ is \emph{$\leqslant$-embedding}
if $\Gamma\left[A\right]\leqslant N$.
\end{defn}
\noun{Notation. }Suppose $A,B,C$ are $\mathfrak{L}$-structures with
$A,B\subseteq C$. We denote $AB$ for the $\mathfrak{L}$-substructure
of $C$ with domain $A\cup B$. For an $\mathfrak{L}$-structure $ N$,
denote $\mbox{Age}\left( N\right)$ for the set of all finite
substructures of $ N$; up to isomorphism.
\begin{defn}
Let $\left(\mathcal{K},\leqslant\right)$ be a smooth class.
\begin{enumerate}
\item We say $\left(\mathcal{K},\leqslant\right)$ has the hereditary property
(HP) if $A\in\mathcal{K}$ and $B\subseteq A$, then $B\in\mathcal{K}$.
\item Suppose $A,B$ and $C$ are elements of $\mathcal{K}$ such that $A\leqslant B,C$.
The \emph{free-amalgam} of $B$ and $C$ over $A$ is a structure
with domain $BC$ whose only relations are those from $B$ and $C$
such that $B\cap C=A$. We denote it by $B\otimes_{A}C$.
\item We say $\left(\mathcal{K},\leqslant\right)$ has the \emph{$\leqslant$-amalgamation
property} (AP) if for every $A,B,C\in \mathcal K$ and $\leqslant$-embeddings $\gamma_1:A\rightarrow B$ and $\gamma_2:A\rightarrow C$, there are  $D$ and $\leqslant$-embeddings $\lambda_1:B\rightarrow D$ and $\lambda_2:C\rightarrow D$ such that $\lambda_1\circ \gamma_1=\lambda_2\circ \gamma_2$ (equivalently; as we assume  $\mathcal K$ is closed under isomorphism,  for every $B,C\in \mathcal K$ that have a common substructure $A$ with $A\leqslant B,C$, there is $D\in\mathcal{K}$ such that $B\leqslant D$
and $C\leqslant D$).
\item We say $\left(\mathcal{K},\leqslant\right)$ has the \emph{free-amalgamation
property} if for $B,C\in \mathcal K$ that have a common substructure $A$
with $A\leqslant B,C$, then $B\otimes_{A}C\in\mathcal{K}$.
\end{enumerate}
\end{defn}
\begin{rem}
We included $\emptyset$ in the class $\mathcal{K}$ in order to consider
the joint embedding property (JEP) as a special case of the $\leqslant$-amalgamation
property.\end{rem}
\begin{prop}
\label{prop:sgen} If $\left(\mathcal{K},\leqslant\right)$ is a smooth
class with the $\leqslant$-amalgamation property, then there is a
unique countable structure $\mathbf{M}$, up to isomorphism, satisfying:
\begin{enumerate}
\item $\Age\left(\mathbf{M}\right)=\mathcal{K}$;
\item $\mathbf{M}=\bigcup_{i\in \omega}A_i$ where $A_i\in \mathcal K$ and $A_i\leqslant A_{i+1}$ for every $i\in \omega$;
\item If $A\leqslant\mathbf{M}$ and $A\leqslant B\in\mathcal{K}$, then
there is an embedding $\Gamma :B\longrightarrow\mathbf{M}$ with $\Gamma\upharpoonright_{A}=\mbox{id}_{A}$
and $\Gamma\left[B\right]\leqslant\mathbf{M}$.
\end{enumerate}
\end{prop}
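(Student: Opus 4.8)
The plan is to establish existence by a one-sided chain construction with amalgamation and bookkeeping, and uniqueness by a back-and-forth argument; both rely only on the $\leqslant$-amalgamation property together with the coherence axioms of a smooth class. For existence, I would first note that since $\mathfrak{L}$ is finite relational and the members of $\mathcal{K}$ are finite, $\mathcal{K}$ has only countably many isomorphism types; hence there are only countably many \emph{extension tasks}, i.e. pairs $(A,B)$ with $A\leqslant B$ in $\mathcal{K}$. I would then build an increasing chain $A_0\leqslant A_1\leqslant\cdots$ in $\mathcal{K}$ by recursion, starting with $A_0=\emptyset$. At each stage, a fair bookkeeping presents a finite $\leqslant$-closed substructure $A\leqslant A_s$ (there are finitely many, as $A_s$ is finite) together with an extension $A\leqslant B\in\mathcal{K}$; taking a copy of $B$ meeting $A_s$ exactly in $A$ and applying the $\leqslant$-amalgamation property to $B$ and $A_s$ over $A$, I obtain $A_{s+1}\in\mathcal{K}$ with $A_s\leqslant A_{s+1}$ and with the copy of $B$ satisfying $B\leqslant A_{s+1}$. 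Dovetailing over $s$ ensures every such task, relative to every $A_s$, is eventually handled. Setting $\mathbf{M}=\bigcup_s A_s$ gives (2) immediately.

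Next I would verify (1) and (3). The crucial coherence step is that each $A_s\leqslant\mathbf{M}$: given any finite substructure $C$ of $\mathbf{M}$ containing $A_s$, pick $A_t\supseteq C$; transitivity of the chain gives $A_s\leqslant A_t$, and axiom (2) of Definition \ref{def:smooth} (applied to $A_s\subseteq C\subseteq A_t$) yields $A_s\leqslant C$, whence $A_s\leqslant\mathbf{M}$. For (1), any finite substructure of $\mathbf{M}$ lies in some $A_s\in\mathcal{K}$ and so belongs to $\mathcal{K}$ by closure under substructure, while conversely any $B\in\mathcal{K}$ embeds over $\emptyset\leqslant\mathbf{M}$ using (3); thus $\Age(\mathbf{M})=\mathcal{K}$. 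For (3), if $A\leqslant\mathbf{M}$ is finite and $A\leqslant B\in\mathcal{K}$, then $A\subseteq A_s$ for some $s$ and $A\leqslant A_s$ by the definition of $A\leqslant\mathbf{M}$; this task is realized at some later stage, producing a copy $\Gamma[B]\leqslant A_{s+1}\leqslant\mathbf{M}$ with $\Gamma\upharpoonright_{A}=\mathrm{id}_A$, and transitivity gives $\Gamma[B]\leqslant\mathbf{M}$.

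For uniqueness, suppose $\mathbf{M}$ and $\mathbf{M}'$ both satisfy (1)--(3). I would run a back-and-forth along enumerations of their universes, building an increasing union of finite partial isomorphisms $f:A\to A'$ with $A\leqslant\mathbf{M}$ and $A'\leqslant\mathbf{M}'$, starting from the empty map (using $\emptyset\leqslant\mathbf{M},\mathbf{M}'$). The key extension step: given such $f$ and a target $b\in\mathbf{M}$, choose $s$ with $A\cup\{b\}\subseteq A_s$ where $\mathbf{M}=\bigcup A_s$; then $A_s\leqslant\mathbf{M}$ is finite, $A\leqslant A_s$, and transporting $A_s$ across $f$ yields an extension $A'\leqslant B'\in\mathcal{K}$ (an isomorphic copy of $A_s$ over $A'$), to which property (3) for $\mathbf{M}'$ applies, extending $f$ to an isomorphism with domain $A_s\ni b$ and $\leqslant$-closed image. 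The back step is symmetric, and the union of the chain of partial maps is the desired isomorphism.

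The main obstacle, and the place where the smooth-class axioms do the real work, is maintaining and transferring $\leqslant$-closedness: both the limit coherence $A_s\leqslant\mathbf{M}$ and the requirement that the partial maps in the back-and-forth have $\leqslant$-closed domain and range. These depend essentially on axiom (2) of Definition \ref{def:smooth} (localizing $\leqslant$ to intermediate substructures) and on transitivity and $\mathfrak{L}$-embedding invariance of $\leqslant$; note that axiom (3) is not directly invoked for this proposition, though it underlies the well-behavedness of $\leqslant$-closures exploited in later sections. Care is also required so the bookkeeping is genuinely fair, since new $\leqslant$-closed substructures keep appearing as the chain grows—this is handled by a standard dovetailing over stages together with the finitely many tasks available at each stage.
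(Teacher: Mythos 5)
Your proof is correct; the paper gives no argument of its own here (it simply cites \cite{KueLas}), and your existence proof by an increasing $\leqslant$-chain with dovetailed amalgamation tasks, together with the back-and-forth for uniqueness, is exactly the standard argument from that reference. The point you rightly isolate as the crux --- that $A_s\leqslant\mathbf{M}$ follows from transitivity along the chain plus axiom (2) of Definition \ref{def:smooth}, and that this is what keeps domains and ranges $\leqslant$-closed throughout --- is handled correctly.
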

\begin{proof}
See \cite{KueLas}. \end{proof}
\begin{defn}
The structure $\mathbf{M}$, that is obtained in the above proposition,
is called the \emph{Fra\"iss\'e-Hrushovski }$\left(\mathcal{K},\leqslant\right)$\emph{-generic
structure} or simply $\left(\mathcal{K},\leqslant\right)$\emph{-generic
structure.}\end{defn}
\begin{fact}
\label{f5} (See \cite{KueLas}) Suppose $A\subseteq_{fin}\mathbf{M}$.
Then, there is a unique smallest finite closed set that contains $A$
in $\mathbf{M}$. It is called $\leqslant$-closure of $A$ in $\mathbf{M}$
that is denoted by $\cl A$.
\end{fact}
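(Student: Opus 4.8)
The plan is to isolate the key closure property---that the intersection of two finite $\leqslant$-closed subsets of $\mathbf{M}$ is again $\leqslant$-closed---and then to assemble $\cl A$ as a minimal intersection. First I would prove the auxiliary claim: if $B_{1}\leqslant\mathbf{M}$ and $B_{2}\leqslant\mathbf{M}$ are finite, then $B_{1}\cap B_{2}\leqslant\mathbf{M}$. To see this, fix an arbitrary finite substructure $D$ of $\mathbf{M}$ with $B_{1}\cap B_{2}\subseteq D$, and pass to the finite substructure $D'$ of $\mathbf{M}$ with domain $D\cup B_{1}\cup B_{2}$. Since $B_{1}\leqslant\mathbf{M}$, the definition of $\leqslant$ relative to $\mathbf{M}$ gives $B_{1}\leqslant D'$; applying property (3) of Definition \ref{def:smooth} inside $D'$ (with the roles $A_{1}=B_{1}$, $A_{2}=B_{2}$) yields $B_{1}\cap B_{2}\leqslant B_{2}$. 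Combining this with $B_{2}\leqslant D'$ (again from $B_{2}\leqslant\mathbf{M}$) and transitivity of $\leqslant$ along the chain $B_{1}\cap B_{2}\subseteq B_{2}\subseteq D'$ gives $B_{1}\cap B_{2}\leqslant D'$. Finally, property (2) applied to $B_{1}\cap B_{2}\subseteq D\subseteq D'$ gives $B_{1}\cap B_{2}\leqslant D$; as $D$ was arbitrary, $B_{1}\cap B_{2}\leqslant\mathbf{M}$.

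Second, I would establish that at least one finite $\leqslant$-closed set contains $A$. By Proposition \ref{prop:sgen}(2) we may write $\mathbf{M}=\bigcup_{i\in\omega}A_{i}$ with $A_{i}\leqslant A_{i+1}$, and since $A$ is finite we have $A\subseteq A_{n}$ for some $n$. Transitivity gives $A_{n}\leqslant A_{m}$ for all $m\geq n$, and for any finite $B$ with $A_{n}\subseteq B\subseteq\mathbf{M}$ one has $B\subseteq A_{m}$ for some $m\geq n$, whence property (2) gives $A_{n}\leqslant B$; thus $A_{n}\leqslant\mathbf{M}$.

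Finally I would assemble the closure. Let $\mathcal{G}$ be the collection of all finite $\leqslant$-closed sets $B$ with $A\subseteq B\subseteq A_{n}$. This family is finite (every member is a subset of the finite set $A_{n}$) and nonempty (it contains $A_{n}$), so by iterating the auxiliary claim the set $C:=\bigcap\mathcal{G}$ is again $\leqslant$-closed, finite, and contains $A$. To see $C$ is smallest among \emph{all} finite closed sets containing $A$, take any finite $B\leqslant\mathbf{M}$ with $A\subseteq B$; the auxiliary claim shows $B\cap A_{n}\leqslant\mathbf{M}$, and since $A\subseteq B\cap A_{n}\subseteq A_{n}$ we have $B\cap A_{n}\in\mathcal{G}$, whence $C\subseteq B\cap A_{n}\subseteq B$. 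Thus $C$ is the unique smallest finite $\leqslant$-closed set containing $A$, and we set $\cl A:=C$.

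I expect the only genuinely delicate step to be the auxiliary claim, where one must respect that $\leqslant\mathbf{M}$ is defined \emph{through all} finite substructures of $\mathbf{M}$: the reduction to a single finite ambient structure $D'$ (so that property (3), which lives purely inside $\mathcal{K}$, becomes applicable) together with the descent back to the arbitrary witness $D$ via property (2) is exactly what makes the infinitary definition of closure interact correctly with the finitary smoothness axioms. Everything else is bookkeeping with transitivity and the chain decomposition.
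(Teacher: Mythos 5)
Your proof is correct. The paper itself gives no argument for this fact---it simply defers to the reference \cite{KueLas}---and your proof is the standard one: the reduction of the intersection claim to a single finite ambient structure $D'$ so that axiom (3) applies, followed by descent to the arbitrary witness $D$ via axiom (2), is exactly the right way to handle the infinitary definition of $\leqslant\mathbf{M}$, and the chain $\mathbf{M}=\bigcup_i A_i$ correctly supplies the finite closed superset needed to make the intersection defining $\cl{A}$ a finite one.
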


\subsubsection{Ab-initio classes of graphs}

\label{sub:ab}

Let $\mathcal{\mathfrak{L}}=\left\{ \mathfrak{R}\right\} $ consist
of a binary relation $\mathfrak{R}$ and let $\mathcal{K}$ be the
class of all finite graphs. For $\alpha\geq1$, define $\delta_{\alpha}:\mathcal{K}\longrightarrow\mathbb{R}$
as $\delta_{\alpha}\left(A\right)=\alpha\cdot\left|V\left(A\right)\right|-\left|E\left(A\right)\right|$
where $V\left(A\right)$ is the set of vertices of $A$ and $E\left(A\right)$
the set of edges of $A$. For every $A\subseteq B\in\mathcal{K}$,
define $A\leqslant_{\alpha}B$ if
\[
\delta_{\alpha}\left(C/A\right):=\delta_{\alpha}\left(C\right)-\delta_{\alpha}\left(A\right)\geq0,
\]
for every $C$ with $A\subseteq C\subseteq B$. Finally put $\mathcal{K}_{\alpha}^{+}:=\left\{ A\in\mathcal{K}:\delta_{\alpha}(B)\geq0,\mbox{ for every }B\subseteq A\right\} $.
\begin{fact}
$\left(\mathcal{K}_{\alpha}^{+},\leqslant_{\alpha}\right)$ is a smooth
class with the free-amalgamation property and HP.
\end{fact}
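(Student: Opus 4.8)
The plan is to reduce everything to a single structural inequality — the submodularity of $\delta_{\alpha}$ — and then read off each required property from it. Several of the clauses are immediate and I would dispatch them first. The hereditary property holds because a subgraph of a subgraph of $A$ is a subgraph of $A$, so $\mathcal{K}_{\alpha}^{+}$ is closed downward; axiom (1) of Definition \ref{def:smooth} holds because membership $A\in\mathcal{K}_{\alpha}^{+}$ says exactly that $\delta_{\alpha}(C)\geq 0=\delta_{\alpha}(\emptyset)$ for every $C\subseteq A$, which is $\emptyset\leqslant_{\alpha}A$; axiom (2) holds since shrinking $B$ to $B'$ only discards constraints of the form $\delta_{\alpha}(C/A)\geq 0$. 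Reflexivity of $\leqslant_{\alpha}$ is trivial, and invariance under embeddings is clear since $\delta_{\alpha}$ depends only on the vertex and edge counts. The substantive clauses — transitivity, axiom (3), and free amalgamation — all flow from submodularity.

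First I would establish: for induced subgraphs $X,Y$ of a common graph,
\[
\delta_{\alpha}(X\cup Y)+\delta_{\alpha}(X\cap Y)\leq\delta_{\alpha}(X)+\delta_{\alpha}(Y),
\]
where $X\cup Y$ denotes the induced subgraph on $V(X)\cup V(Y)$. This rests on the exact vertex identity $|V(X\cup Y)|+|V(X\cap Y)|=|V(X)|+|V(Y)|$ together with the edge inequality $|E(X\cup Y)|\geq|E(X)|+|E(Y)|-|E(X\cap Y)|$, the latter holding because every edge counted on the right lies in $X\cup Y$, while the union may carry additional edges joining $V(X)\setminus V(Y)$ to $V(Y)\setminus V(X)$.

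With submodularity available, transitivity and axiom (3) are short. For transitivity, given $A\leqslant_{\alpha}B\leqslant_{\alpha}C$ and $A\subseteq D\subseteq C$, I would apply the inequality to $D$ and $B$: since $A\subseteq D\cap B\subseteq B$ yields $\delta_{\alpha}(D\cap B)\geq\delta_{\alpha}(A)$ and $B\subseteq D\cup B\subseteq C$ yields $\delta_{\alpha}(D\cup B)\geq\delta_{\alpha}(B)$, one obtains $\delta_{\alpha}(D)\geq\delta_{\alpha}(D\cup B)+\delta_{\alpha}(D\cap B)-\delta_{\alpha}(B)\geq\delta_{\alpha}(A)$. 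For axiom (3), with $A_{1}\leqslant_{\alpha}A$ and $A_{1}\cap A_{2}\subseteq D\subseteq A_{2}$, the key observation is $D\cap A_{1}=A_{1}\cap A_{2}$; submodularity applied to $D$ and $A_{1}$, combined with $\delta_{\alpha}(D\cup A_{1})\geq\delta_{\alpha}(A_{1})$ (from $A_{1}\subseteq D\cup A_{1}\subseteq A$), then gives $\delta_{\alpha}(D)\geq\delta_{\alpha}(A_{1}\cap A_{2})$, which is $A_{1}\cap A_{2}\leqslant_{\alpha}A_{2}$.

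The hard part will be the free-amalgamation property. Let $D=B\otimes_{A}C$ and take any subgraph $X\subseteq D$; I would set $X_{B}=X\cap B$, $X_{C}=X\cap C$, and $X_{A}=X\cap A$. The decisive feature of the free amalgam is that $D$ carries \emph{no} edges between $V(B)\setminus V(A)$ and $V(C)\setminus V(A)$, so edges split exactly and
\[
\delta_{\alpha}(X)=\delta_{\alpha}(X_{B})+\delta_{\alpha}(X_{C})-\delta_{\alpha}(X_{A}),
\]
an equality rather than merely the submodular inequality. Since $X_{A}=X_{B}\cap A$, the hypothesis $A\leqslant_{\alpha}B$ gives (via axiom (3), or directly by applying submodularity to $X_{B}$ and $A$ inside $B$) that $\delta_{\alpha}(X_{B})\geq\delta_{\alpha}(X_{A})$, while $X_{C}\subseteq C\in\mathcal{K}_{\alpha}^{+}$ forces $\delta_{\alpha}(X_{C})\geq 0$. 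Hence $\delta_{\alpha}(X)=\bigl(\delta_{\alpha}(X_{B})-\delta_{\alpha}(X_{A})\bigr)+\delta_{\alpha}(X_{C})\geq 0$, and so $D\in\mathcal{K}_{\alpha}^{+}$. The care required here — correctly accounting for the absence of cross-edges in order to upgrade the inequality to an equality, and invoking the $\leqslant_{\alpha}$-closedness of $A$ rather than mere membership in $\mathcal{K}_{\alpha}^{+}$ — is what makes this step the crux of the argument.
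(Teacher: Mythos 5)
Your proof is correct, and it is the standard argument (the paper states this Fact without proof): everything is reduced to the submodularity of $\delta_{\alpha}$, with the free-amalgamation step upgraded to the exact identity $\delta_{\alpha}(X)=\delta_{\alpha}(X_{B})+\delta_{\alpha}(X_{C})-\delta_{\alpha}(X_{A})$ because the free amalgam has no cross-edges. All the individual steps check out, including the correct use of $A\leqslant_{\alpha}B$ (rather than mere membership of $A$ in $\mathcal{K}_{\alpha}^{+}$) to get $\delta_{\alpha}(X_{B})\geq\delta_{\alpha}(X_{A})$.
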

Hence, there is the unique countable $\left(\mathcal{K}_{\alpha}^{+},\leqslant_{\alpha}\right)$-generic structure $\mathbf{M}_\alpha$.
When the coefficient $\alpha$ is rational, using a finite-to-one function $\mu$ over the $0$-minimally algebraic elements (see Definition \ref{zero-min}), one can restrict the ab-initio class $\mathcal{K}_{\alpha}^{+}$ to $\mathcal{K}_{\alpha}^{\mu}$ such that $\left(\mathcal{K}_{\alpha}^{\mu},\leqslant_{\alpha}\right)$ has AP (see \cite{Balshi}).
\begin{defn}
\label{zero-min} Suppose $\alpha \geq 1$ is a rational number and $\mathbf{M}_\alpha$ is the $\left(\mathcal{K}_{\alpha}^{+},\leqslant_{\alpha}\right)$-generic structure:
\begin{enumerate}
 \item Suppose $A$ and $B$ are two disjoint finite sets in $\mathbf{M}_{\alpha}$.
B is called \emph{$0$-algebraic over} A if $\delta\left(B/A\right)=0$
and $\delta\left(B_{0}/A\right)>0$ for all proper subset $\emptyset\neq B_{0}\subsetneqq B$.
$B$ is called \emph{$0$-minimally algebraic over} $A$ if there
is no proper subset $A_{0}$ of $A$ such that $B$ is $0$-algebraic
over $A_{0}$.
 \item Let $\mathcal E\in \mathcal{K}_{\alpha}^{+}\times \mathcal{K}_{\alpha}^{+}$ be the set of all $(A,B)$ such that $B$ is $0$-minimally algebraic over $A$ and $A\neq \emptyset$. Define a function $\mu:\mathcal E\rightarrow \mathbb{N}$	such that $\mu $ is finite-to-one, and $\mu(A,B)\geq \delta(A)$ for every $(A,B)\in \mathcal E$.
 \item Let $\mathcal{K}_{\alpha}^{\mu}\subseteq \mathcal{K}_{\alpha}^{+}$ be such that $A\in \mathcal{K}_{\alpha}^{\mu}$ if for every $A'\subseteq A$ and $B'$, a 0-minimally algebraic set over $A'$, the number of pairwise disjoint isomorphic copies of $B'$ over $A'$ in $A$ is bounded by $\mu(A',B')$.
\end{enumerate}
 \end{defn}

\begin{fact}
(cf.	\cite{Hrunew}) The class $\left(\mathcal{K}_{\alpha}^{\mu},\leqslant_{\alpha}\right)$ is a smooth
class with AP and HP.
\end{fact}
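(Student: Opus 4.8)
The plan is to treat the three requirements separately, handling smoothness and HP quickly and reserving the real work for the $\leqslant_\alpha$-amalgamation property. For smoothness, I would note that the relation $\leqslant_\alpha$ is defined purely through the pre-dimension $\delta_\alpha$ and is therefore insensitive to the passage from $\mathcal{K}_\alpha^{+}$ to the subclass $\mathcal{K}_\alpha^{\mu}$. Consequently each of the three axioms of Definition \ref{def:smooth} for $(\mathcal{K}_\alpha^{\mu},\leqslant_\alpha)$ is simply the restriction of the corresponding statement already known for $(\mathcal{K}_\alpha^{+},\leqslant_\alpha)$; in particular the submodular axiom (3), $A_1\cap A_2\leqslant_\alpha A_2$, transfers verbatim. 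For HP, given $A\in\mathcal{K}_\alpha^{\mu}$ and $B\subseteq A$, first $B\in\mathcal{K}_\alpha^{+}$ since $\mathcal{K}_\alpha^{+}$ has HP, and then the $\mu$-bound for $B$ is immediate: for any $A'\subseteq B$ and any $B'$ that is $0$-minimally algebraic over $A'$, a family of pairwise disjoint copies of $B'$ over $A'$ inside $B$ is also such a family inside $A$, so its size is at most $\mu(A',B')$. Thus HP is routine.

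The substance is AP. First I would reduce, using transitivity of $\leqslant_\alpha$ and the standard decomposition of a $\leqslant_\alpha$-extension into minimal steps, to amalgamating $B$ over $A$ with a \emph{minimal} extension $A\leqslant_\alpha C$; the only steps that interact with $\mu$ are those in which $C=AB'$ is obtained by attaching a single configuration $B'$ that is $0$-minimally algebraic over some $A'\subseteq A$ (so $\delta_\alpha(B'/A')=0$). The natural first candidate is the free amalgam $D_0=B\otimes_A C$, which by the free-amalgamation Fact for $(\mathcal{K}_\alpha^{+},\leqslant_\alpha)$ lies in $\mathcal{K}_\alpha^{+}$ and satisfies $B\leqslant_\alpha D_0$ and $C\leqslant_\alpha D_0$.

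The obstacle is that $D_0$ need not lie in $\mathcal{K}_\alpha^{\mu}$: attaching $B'$ freely raises the number of copies of $B'$ over $A'$ by one, which is fatal exactly when $B$ already carries the maximal allowed number $\mu(A',B')$ of such copies. To confirm that nothing worse occurs, I would prove a connectivity lemma: because the amalgam is free (no edges between $B\setminus A$ and $C\setminus A$) and $B'$ is \emph{minimal} $0$-algebraic, submodularity of $\delta_\alpha$ forces every copy of a $0$-minimally algebraic configuration in $D_0$ to lie entirely inside $B$ or entirely inside $C$, with no copy straddling the amalgam. This localizes every potential violation to a base $A'\subseteq A$ carrying a new copy on the $C$-side, and reduces the whole matter to the single minimal step above.

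It remains to resolve that step, and this is where I expect the main difficulty and where the hypothesis $\mu(A',B')\geq\delta_\alpha(A')$ is used. If $B$ contains fewer than $\mu(A',B')$ copies of $B'$ over $A'$, I keep $D=D_0$, which stays in $\mathcal{K}_\alpha^{\mu}$. If $B$ is already saturated, I instead embed $C$ into $D=B$ by identifying its new configuration $B'$ with a copy already present in $B$, chosen to be free from $A$ over $A'$ and $\leqslant_\alpha$-closed, so that $C=AB'$ becomes isomorphic to an existing substructure of $B$ and the map is a genuine $\leqslant_\alpha$-embedding. The role of $\mu(A',B')\geq\delta_\alpha(A')$ is precisely to guarantee that enough such copies exist for a suitable one to be available, and the hard part will be the bookkeeping: verifying that after the identification $D$ still satisfies $B\leqslant_\alpha D$ and $C\leqslant_\alpha D$ and that \emph{all} $\mu$-bounds, not merely the one being repaired, continue to hold. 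Reassembling the minimal steps then yields the general amalgam and completes AP.
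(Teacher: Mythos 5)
The paper itself offers no proof of this Fact: it is stated with a citation to Hrushovski's paper, so there is no internal argument to compare against. Your outline reproduces, in structure, the standard argument from that source. Smoothness and HP are indeed immediate, since $\leqslant_{\alpha}$ is defined by $\delta_{\alpha}$ alone and the $\mu$-bounds pass to substructures; and AP is proved by decomposing a $\leqslant_{\alpha}$-extension into minimal steps, taking free amalgams, localizing any failure of a $\mu$-bound to a base contained in $A$ (your ``connectivity lemma'' is the standard observation that a $0$-minimally algebraic set is adjacent to every point of its base and cannot be split into two nonempty pieces with no edges between them, hence cannot straddle a free amalgam), and repairing the one problematic case by identifying the incoming configuration with a copy already present in $B$.

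The genuine gap is that the repair step --- which is the entire mathematical content of the Fact --- is asserted rather than proved. When $B$ already carries $\mu(A',B')$ pairwise disjoint copies of $B'$ over $A'$, you need one copy $B''$ with $AB''\cong_{A}AB'=C$. But the copies counted by the $\mu$-bound are only required to be pairwise disjoint: a priori a given copy may meet $A\setminus A'$, or carry edges into $A\setminus A'$ or into another copy, in which case it does not realize the isomorphism type of $C$ over $A$ and cannot serve as the image of the embedding. The required statement is that not all $\mu(A',B')$ copies can be bad in this sense; proving it is a pre-dimension count (using $A\leqslant_{\alpha}B$ to show the union of the copies has non-negative $\delta_{\alpha}$ over $A$, and bounding the number of bad copies against $\delta_{\alpha}(A')$), and this count is exactly where the hypothesis $\mu(A',B')\geq\delta_{\alpha}(A')$ is consumed. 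Your proposal names the hypothesis but never performs the count; you also do not verify that $AB''\leqslant_{\alpha}B$, which is needed for the identification to be a $\leqslant_{\alpha}$-embedding, nor that minimal steps with $\delta_{\alpha}>0$ create no new $0$-minimally algebraic configurations over bases inside $A$. Until that lemma is carried out, AP is not established.
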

Let $\mathbf{M}_\alpha^\mu$ for  the $\left(\mathcal{K}_{\alpha}^{\mu},\leqslant_{\alpha}\right)$-generic structure.\\
To obtain an $\omega$-categorical generic structure one needs further restrictions: Suppose $\mathsf f:\mathbb R^{\geq 0}\rightarrow \mathbb R^{\geq 0}$ is an increasing unbounded function. Then let
$$\mathcal K_\alpha^{\mathsf f}:=\left\{A\in \mathcal K_\alpha^{+}:\delta_\alpha\left(A'\right)\geq \mathsf f\left(\left|A'\right|\right) \forall A'\subseteq A\right\}.$$
\begin{fact}
For suitable choice of $\mathsf f$ (called \emph{good}) the class $\left(\mathcal K_\alpha^{\mathsf f},<_\alpha\right)$ a smooth
class with the free-amalgamation property and HP, where $A<_\alpha B$ iff $\delta_\alpha\left(A\right)<\delta_\alpha\left(B'\right)$ for every $A\subsetneqq B'\subseteq B$.
\end{fact}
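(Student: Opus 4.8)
The plan is to fix, once and for all, the meaning of \emph{good}: a function $\mathsf f:\mathbb R^{\geq 0}\to\mathbb R^{\geq 0}$ is good if it is increasing, unbounded, (weakly) concave, satisfies $\mathsf f(0)=0<\mathsf f(1)$, and has $\mathsf f(1)$ no larger than the least \emph{positive} value attained by $\delta_\alpha$ on a one-point extension (the least positive number of the form $\alpha-e$, $e\in\mathbb N$). With this fixed, several clauses are immediate. HP holds because the defining condition ``$\delta_\alpha(A')\geq\mathsf f(|A'|)$ for all $A'\subseteq A$'' is inherited by every substructure, and $A\in\mathcal K_\alpha^{+}$ already forces all substructures into $\mathcal K_\alpha^{+}$. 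Axiom (1) of Definition \ref{def:smooth} holds since $\mathsf f(0)=0$ puts $\emptyset$ in the class while every nonempty $B'$ in the class has $\delta_\alpha(B')\geq\mathsf f(1)>0=\delta_\alpha(\emptyset)$, whence $\emptyset<_\alpha A$; axiom (2) is purely formal, the defining clauses of $A<_\alpha B'$ being a subfamily of those of $A<_\alpha B$. Reflexivity, transitivity and embedding-invariance of $<_\alpha$ are routine (transitivity follows from submodularity in the standard way). Finally, since $A<_\alpha B$ implies $A\leqslant_\alpha B$, the membership $B\otimes_A C\in\mathcal K_\alpha^{+}$ is exactly the free-amalgamation Fact already recorded for $(\mathcal K_\alpha^{+},\leqslant_\alpha)$; so the only genuinely new content is preservation of the $\mathsf f$-bound and of $<_\alpha$.

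The workhorse is submodularity of $\delta_\alpha$: for $U,W$ inside a common graph, $\delta_\alpha(U\cup W)+\delta_\alpha(U\cap W)\leq\delta_\alpha(U)+\delta_\alpha(W)$, with equality exactly when there are no edges between $U\setminus W$ and $W\setminus U$. I would first use this for smoothness axiom (3). Given $A_1<_\alpha A$ with $A_1,A_2\subseteq A$, and any $F$ with $A_1\cap A_2\subsetneq F\subseteq A_2$, one checks that $F\cap A_1=A_1\cap A_2$ and $A_1\subsetneq F\cup A_1\subseteq A$; submodularity gives $\delta_\alpha(F)+\delta_\alpha(A_1)\geq\delta_\alpha(F\cup A_1)+\delta_\alpha(A_1\cap A_2)$, and $A_1<_\alpha A$ yields $\delta_\alpha(F\cup A_1)>\delta_\alpha(A_1)$, so $\delta_\alpha(F)>\delta_\alpha(A_1\cap A_2)$, i.e. $A_1\cap A_2<_\alpha A_2$. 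The same inequality, applied with $U=B'$, $W=A$ inside $D=B\otimes_A C$, shows that each factor stays $<_\alpha$-closed in the amalgam, so that the amalgamating maps are $\leqslant$-embeddings and AP is delivered by free amalgamation.

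For the $\mathsf f$-bound on $D=B\otimes_A C$ the starting point is that free amalgamation makes $\delta_\alpha$ additive over the overlap: for $D'\subseteq D$, writing $B'=D'\cap B$, $C'=D'\cap C$ and $A'=D'\cap A=B'\cap C'$, the absence of $B$--$C$ edges gives $\delta_\alpha(D')=\delta_\alpha(B')+\delta_\alpha(C')-\delta_\alpha(A')$, while $|D'|=|B'|+|C'|-|A'|$. Writing $\mathrm{ex}(Y):=\delta_\alpha(Y)-\mathsf f(|Y|)\geq 0$ for the excess of a structure in the class, and using the supermodular form of concavity $\mathsf f(|B'|)+\mathsf f(|C'|)\geq\mathsf f(|B'|+|C'|-|A'|)+\mathsf f(|A'|)$ (valid as $|A'|\leq|B'|,|C'|$), the target $\delta_\alpha(D')\geq\mathsf f(|D'|)$ reduces cleanly to the single excess inequality
\[
\mathrm{ex}(B')+\mathrm{ex}(C')\ \geq\ \mathrm{ex}(A').
\]
The membership hypotheses, the submodular surplus $\delta_\alpha(B')-\delta_\alpha(A')\geq\delta_\alpha(B'A/A)>0$ forced by $A<_\alpha B$ (and symmetrically for $C'$), and concavity all feed into this one inequality.

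The hard part is precisely establishing the displayed excess inequality, and this is where I expect the real fight. The naive route — trying to prove the stronger monotonicity $\mathrm{ex}(A')\leq\mathrm{ex}(B')$, equivalently $\delta_\alpha(B'/A')\geq\mathsf f(|B'\setminus A'|)$ — genuinely fails, because the overlap can carry $\delta_\alpha$-value strictly above $\mathsf f(|A'|)$ and a single cheap edge back into the base is not controlled vertex-by-vertex; one must instead exploit that the two factor-excesses \emph{together} (reinforced by the concavity gap, which must not be discarded) dominate the overlap-excess. The argument I would pursue is a minimal-counterexample induction on $|D'\setminus A|$: peel off a vertex $v$ of $D'\cap(C\setminus A)$ of least degree into $D'\cap C$, invoke minimality for the $\mathsf f$-bound on the remainder, use $A<_\alpha C$ to bound the edges from $v$ back into $A'$, and invoke the goodness clauses — the slope bound $\mathsf f(n)-\mathsf f(n-1)\leq\mathsf f(1)$ from concavity together with $\mathsf f(1)$ not exceeding the least positive one-point $\delta_\alpha$-cost — to ensure the removed vertex pays for the increment of $\mathsf f$, with the membership (density) constraint guaranteeing a suitable low-degree $v$ exists. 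The delicate step, and the main obstacle, is exactly this quantitative vertex selection and the bookkeeping of concavity gaps; everything else is the routine submodular computation sketched above.
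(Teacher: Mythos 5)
The paper offers no proof of this Fact at all: it is imported from the literature on $\omega$-categorical Hrushovski constructions (the reference following the statement), where essentially the entire content lies in pinning down what \emph{good} means. Your proposal has a genuine gap at exactly that point, in two ways. First, the decisive step is missing: you reduce everything to the excess inequality $\mathrm{ex}(B')+\mathrm{ex}(C')\geq \mathrm{ex}(A')$ and then only describe an induction you would like to run, candidly listing the obstacles (negative one-vertex increments, the vertex-selection problem) without resolving them. Second, and more seriously, the definition of good that you fix --- increasing, unbounded, weakly concave, $\mathsf f(0)=0<\mathsf f(1)$, and $\mathsf f(1)$ at most the least positive one-point value of $\delta_\alpha$ --- is demonstrably insufficient, so no argument can close the gap from those hypotheses. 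Take $\alpha=2$ and $\mathsf f(n)=n/2$, which satisfies every clause you impose (the least positive value of $2-e$ is $1\geq \mathsf f(1)=\tfrac12$, and a linear function is weakly concave). Let $A=\{a\}$, let $B=K_{3,3}$ with $a$ among its vertices, and let $C=\{a,c_1,c_2,c_3\}$ where $c_1c_2c_3$ is a triangle and the only further edges are $ac_1,ac_2$. One checks directly that $B,C\in\mathcal K_2^{\mathsf f}$ and $A<_2 B$, $A<_2 C$ (every proper induced subgraph of $K_{3,3}$ has a vertex of degree at most $2$, and every intermediate set of $C$ containing $a$ has $\delta_2$-value at least $3>2=\delta_2(A)$). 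Yet $D=B\otimes_A C$ has $|D|=9$ and $\delta_2(D)=3+3-2=4<\tfrac92=\mathsf f(9)$, so $D\notin\mathcal K_2^{\mathsf f}$ and the free-amalgamation property fails.

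What your conditions miss is that the increments $\mathsf f(n+k)-\mathsf f(n)$ must be dominated by the least positive value of $\delta_\alpha(C/A)$ over $k$-point self-sufficient extensions with $C$ still in the class (in the example $\delta_2(C/A)=1$ with $k=3$, while $\mathsf f(9)-\mathsf f(6)=\tfrac32$), not merely over one-point extensions; it is the membership constraint $\delta_\alpha(C)\geq\mathsf f(|C|)$ that bounds $k$ in terms of $\delta_\alpha(C/A)$ and makes such an $\mathsf f$ exist at all. Your verifications of HP, of the smoothness axioms via submodularity, and the reduction of the amalgamation clause to the excess inequality via concavity are all fine; the failure is concentrated entirely in the choice of goodness condition and the unproven excess inequality.
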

Hence, for a good $\mathsf f$, there is the countable $\left(\mathcal{K}_{\alpha}^{\mathsf f},<_{\alpha}\right)$-generic structure $\mathbf{M}_\alpha^{\mathsf f}$. Moreover, the generic structure $\mathbf{M}_\alpha^{\mathsf f}$ is an $\omega$-categorical structure (see \cite{E:pre} for more details).

\subsection{$\leqslant$-Ramsey property and its correspondence with extreme
amenability }

Denote $S_{\omega}$ for the set of all permutations of $\mathbb{N}$.
It is a well-known fact that $S_{\omega}$ with the point-wise convergence
topology forms a Polish group. From now on, we consider the point-wise
convergence topology on $S_{\omega}$.
\label{sub:rams}
\begin{defn}
For a topological group $G$ and a subgroup $H$ of $G$ by a \emph{$k$-coloring}
$c$ of $G/H$ with $k\in\mathbb{N}\backslash\left\{ 0\right\} $,
we mean a map $c:\left\{ hH:h\in G\right\} \longrightarrow\left\{ 0,1,2,\cdots,k-1\right\} $,
from the set of left cosets of $H$ into $\left\{ 0,1,\cdots,k-1\right\} $.
\end{defn}

\begin{fact}
\label{2} (See \cite{KPT} Proposition 4.2) Let $G$ be a closed
subgroup of $S_{\omega}$. Then, the followings are equivalent:
\begin{enumerate}
\item $G$ is extremely amenable;
\item For any open subgroup $H$ of $G$, any $k$-coloring $c:G/H\longrightarrow\left\lbrace 0,1,\cdots,k-1\right\rbrace $
and any finite $A\subseteq_{\text{fin}}G/H$, there are $g\in G$
and $i\in\left\{ 0,1,\cdots,k-1\right\} $ such that $c\left(g.a\right)=i$,
for all $a\in A$.
\end{enumerate}
\end{fact}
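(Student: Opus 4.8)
The statement is the standard Kechris--Pestov--Todorcevic characterisation of extreme amenability for closed subgroups of $S_{\omega}$; as it is quoted as a fact one may simply cite \cite{KPT}, but here is the line of argument I would follow. The plan is to route both implications through the greatest $G$-ambit. I would first recall that a topological group $G$ is extremely amenable precisely when its greatest ambit admits a $G$-fixed point, since every $G$-flow is a factor of the greatest ambit and factor maps carry fixed points to fixed points. For $G$ a closed subgroup of $S_{\omega}$ the open subgroups $H$ are exactly those containing the pointwise stabiliser of some finite subset of $\mathbb{N}$, so the coset spaces $G/H$ are countable discrete $G$-sets, the left uniformity on $G$ is generated by the partitions of $G$ into cosets of open subgroups, and the greatest ambit is the Samuel compactification $\operatorname{Sa}(G)=\varprojlim_{H}\beta(G/H)$, the inverse limit over open $H$ of the Stone--\v{C}ech compactifications equipped with the induced shift action.

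For $(1)\Rightarrow(2)$, fix an open $H$, a $k$-colouring $c$ and a finite $A\subseteq G/H$. I would view $c$ as a point of the compact space $\{0,1,\dots,k-1\}^{G/H}$ on which $G$ acts by the shift $(g\cdot c)(xH)=c(g^{-1}xH)$, and let $X=\overline{G\cdot c}$ be its orbit closure, a $G$-flow. Extreme amenability yields a fixed point $p\in X$; since $G$ acts transitively on $G/H$, invariance of $p$ forces it to be a constant function, say with value $i$. As $p$ lies in the orbit closure and $A$ is finite, there is $g\in G$ with $(g\cdot c)\upharpoonright_{A}=p\upharpoonright_{A}\equiv i$; unwinding the shift and renaming $g^{-1}$ as $g$ gives $c(g\cdot a)=i$ for every $a\in A$, which is exactly $(2)$.

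For $(2)\Rightarrow(1)$, I would manufacture a $G$-fixed point of $\operatorname{Sa}(G)=\varprojlim_{H}\beta(G/H)$. The colouring condition $(2)$ is precisely a finite-intersection statement: for each open $H$ it guarantees that the family of subsets of $G/H$ of the form ``a $G$-translate making a prescribed finite set monochromatic'' is consistent across all finite colourings and all finite sets. I would use this, together with a compactness argument (Tychonoff applied to the product of the $\beta(G/H)$), to select coherent ultrafilters $\mathcal{U}_{H}\in\beta(G/H)$ assembling into a point of the inverse limit that is fixed by every $g\in G$; the monochromaticity clause is what forces $g\cdot\mathcal{U}_{H}=\mathcal{U}_{H}$ simultaneously for all $g$.

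The main obstacle is this last step: converting the purely finitary colouring condition into a genuine invariant point of the inverse limit, handling all open subgroups and all group elements at once and verifying that the chosen ultrafilters are coherent under the projection maps $\beta(G/H')\to\beta(G/H)$ for $H\leqslant H'$. The identification of the greatest ambit with $\varprojlim_{H}\beta(G/H)$ --- that is, the fact that every bounded left-uniformly continuous function on $G$ is uniformly approximated by functions factoring through some coset space $G/H$ --- is the technical underpinning of the whole argument, and it is exactly here that the hypothesis that $G$ is a \emph{closed} subgroup of $S_{\omega}$ enters.
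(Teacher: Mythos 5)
The paper offers no proof of this statement: it is quoted as a Fact with a pointer to \cite{KPT}, Proposition 4.2, so the only comparison available is with the standard argument you are trying to reconstruct. Your treatment of $(1)\Rightarrow(2)$ is correct and complete: passing to the orbit closure of the colouring inside the compact shift space $\left\{0,\dots,k-1\right\}^{G/H}$, using transitivity of $G$ on $G/H$ to see that any fixed point must be a constant function, and then approximating that constant function on the finite set $A$ is exactly the usual proof.

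The direction $(2)\Rightarrow(1)$, however, has a genuine gap, and the plan you sketch points at the wrong difficulty. A fixed point of the greatest ambit $\varprojlim_{H}\beta\left(G/H\right)$ is the same thing as a coherent family of $G$-fixed points of the individual flows $\beta\left(G/H\right)$; since each fixed-point set is closed, the coherence you worry about comes for free from compactness of inverse limits of nonempty compact sets. The entire content is therefore the nonemptiness of each fixed-point set, i.e.\ the existence of a $G$-invariant ultrafilter on each $G/H$, and your claim that ``the monochromaticity clause is what forces $g\cdot\mathcal{U}_{H}=\mathcal{U}_{H}$'' cannot be made to work levelwise. Concretely, take $G=S_{\omega}$ and $H$ the stabiliser of one point, so $G/H\cong\mathbb{N}$ with the natural action: every finite colouring of $\mathbb{N}$ has an infinite colour class, so every finite set has a monochromatic translate and condition $(2)$ holds at this level, yet there is no $S_{\omega}$-invariant ultrafilter on $\mathbb{N}$ (such an ultrafilter would contain an infinite, co-infinite set, which some permutation carries onto its complement), and indeed $S_{\omega}$ is not extremely amenable. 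So the invariant ultrafilter on a given $G/H$ can only be extracted by using the hypothesis at all finer levels $H'\leqslant H$ simultaneously, and that is precisely the step you have not supplied. The standard way to close it is to recognise condition $(2)$ as the non-Archimedean instance of Pestov's finite oscillation stability criterion (every bounded left-uniformly continuous function on $G$ factors, up to $\varepsilon$, through some coset space $G/H$, and a colouring is such a function with finite range), and then to invoke that criterion, whose proof runs through idempotents in a minimal left ideal of the Samuel compactification rather than through levelwise fixed points.
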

We work with a fixed smooth class $\left(\mathcal{K},\leqslant\right)$
with AP and HP. Let $\mathbf{M}$ be the countable $\left(\mathcal{K},\leqslant\right)$-generic
structure with $\mathbb N$ as the underlying universe. Put $G:=\Aut\left(\mathbf{M}\right)$.
It is also well-known that $G$ is a closed subgroup of $S_{\omega}$.
Let $A\subseteq\mathbf{M}$ be a finite subset of $\mathbf{M}$.
Write
\[
G_{\left(A\right)}:=\left\{ g\in G:\hspace*{0.5cm}g\left(a\right)=a,\mbox{ for all }a\in A\right\} ,
\]
for the \emph{point-wise stabilizer} of $A$ in $G$, and write
$G_{A}=\left\lbrace g\in G:g\left[A\right]=A\right\rbrace $ for the
\emph{set-wise stabilizer} of $A$ in $G$ where $\emptyset\neq A\subseteq_{fin}\mathbf{M}$.
\begin{rem}
Note that $\left\{ G_{(A)}:\emptyset\neq A\leqslant\mathbf{M}\right\} $
forms a basis of neighborhood of $1_{G}$. \end{rem}
\begin{defn}
Suppose $A\in\mathcal{K}$ and let $ N$ is any $\mathfrak{L}$-structure.
We denote $\PK{ N}A$ for the set of all $\leqslant$-embeddings
of $A$ into $ N$. For $k\in\mathbb{N}\backslash\left\{ 0\right\} $,
we call a function $c:\PK NA\rightarrow\left\{ 0,1,\cdots,k-1\right\} $\emph{
}a\emph{ $k$-coloring function.}\end{defn}
Suppose $A\in\mathcal{K}$. The group $G$ acts naturally on $\PK{\mathbf{M}}A$
in the following way: When $\Gamma\in\PK{\mathbf{M}}A$
\[
g\cdot\Gamma:=\Gamma'
\]
if $\Gamma'\left(A\right)=g\left[\Gamma\left(A\right)\right]$. It
is worth noting, since elements of $G$ sends $\leqslant$-closed
sets to $\leqslant$-closed sets, this action is well-defined.
\begin{defn}
We say that $G$ \emph{preserves a linear ordering} $\preceq$ on
$\mathbf{M}$ if $a\preceq b$, implies $g\left(a\right)\preceq g\left(b\right)$,
for every $a,b\in\mathbf{M}$ and $g\in G$. \end{defn}
\begin{prop}
\label{3} The following conditions are equivalent:
\begin{enumerate}
\item $G$ is extremely amenable;
\item
\begin{enumerate}
\item $G_{\left(A\right)}=G_{A}$, for any finite $\emptyset\neq A\leqslant\mathbf{M};$
\item \sloppy For every $A\leqslant B\in\mathcal{K}$  and every $k$-coloring function $c:\PK{\mathbf{M}}A\longrightarrow\left\lbrace 0,1,\cdots,k-1\right\rbrace $,
there are $\Lambda\in\PK{\mathbf{M}}B$ and $0\leq i\leq k-1$ such
that $c\left(\Lambda\circ\Gamma\right)=i$, for all $\Gamma\in\PK{\Lambda\left(B\right)}A$.
\end{enumerate}
\item
\begin{enumerate}
\item $G$ preserves a linear ordering;
\item As 2-(b) above.
\end{enumerate}
\end{enumerate}
\end{prop}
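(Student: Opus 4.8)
The plan is to prove the cyclic chain $(1)\Rightarrow(3)\Rightarrow(2)\Rightarrow(1)$, using the abstract criterion of Fact \ref{2} to bridge extreme amenability and the colouring condition, and the $G$-flow on the space of linear orderings to produce the invariant order. The backbone is a $G$-equivariant dictionary between the group-theoretic data of Fact \ref{2} and the combinatorial data of $(2)$–$(3)$, which I would establish once and reuse. Fix a finite nonempty $A\leqslant\mathbf{M}$ and put $H=G_{(A)}$, a basic open subgroup since the $G_{(A)}$ form a neighbourhood basis of $1_{G}$. Using the extension property of Proposition \ref{prop:sgen}(3) together with a back-and-forth, $G$ acts transitively on $\PK{\mathbf{M}}{A}$ (any isomorphism between finite $\leqslant$-closed substructures of $\mathbf{M}$ extends to an automorphism), and the stabiliser of the inclusion $\iota_{A}$ is exactly $G_{(A)}$; hence $gH\mapsto g\circ\iota_{A}$ is a $G$-equivariant bijection $G/H\cong\PK{\mathbf{M}}{A}$. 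Under this identification a $k$-colouring of $G/H$ becomes a colouring function on $\PK{\mathbf{M}}{A}$, and for a finite $\leqslant$-closed copy $B_{0}\leqslant\mathbf{M}$ of a structure $B$ with $A\leqslant B$ the finite set $\PK{B_{0}}{A}\subseteq\PK{\mathbf{M}}{A}$ plays the role of the finite $\mathcal{A}\subseteq_{\mathrm{fin}}G/H$. Crucially, reading the action as $g\cdot\Gamma=g\circ\Gamma$, the $G$-translate of $\PK{B_{0}}{A}$ by the $g$ realising a given $\Lambda\in\PK{\mathbf{M}}{B}$ is precisely the sweep $\{\Lambda\circ\Gamma:\Gamma\in\PK{\Lambda(B)}{A}\}$ appearing in condition (b).

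For $(1)\Rightarrow(3)$ I would obtain 3-(a) by letting $G$ act on the compact space $\mathrm{LO}$ of linear orders on $\mathbb{N}$, a closed (hence compact) $G$-invariant subset of $2^{\mathbb{N}\times\mathbb{N}}$, via $a\,(g\cdot{\preceq})\,b\iff g^{-1}(a)\preceq g^{-1}(b)$; extreme amenability yields a fixed point, i.e.\ a $G$-invariant linear order. For 3-(b) I would feed the dictionary into Fact \ref{2}: given $A\leqslant B$ and a colouring $c$ of $\PK{\mathbf{M}}{A}$, pick a closed copy $B_{0}\leqslant\mathbf{M}$ of $B$, apply Fact \ref{2}(2) to $H=G_{(A)}$, to $c$, and to $\mathcal{A}=\PK{B_{0}}{A}$ to get $g$ and $i$ monochromatic on $g\cdot\mathcal{A}$, and read off $\Lambda:=g\circ\iota_{B_{0}}\in\PK{\mathbf{M}}{B}$, so that $c(\Lambda\circ\Gamma)=i$ for every $\Gamma\in\PK{\Lambda(B)}{A}$.

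The step $(3)\Rightarrow(2)$ is short: 2-(b) is literally 3-(b), and 2-(a) follows because any $g\in G_{A}$ restricts to an order-preserving bijection of the finite chain $(A,\preceq)$ and is therefore the identity on $A$, giving $G_{A}\subseteq G_{(A)}$ and hence equality. For $(2)\Rightarrow(1)$ I would verify the criterion of Fact \ref{2}. A standard reduction lets me test it only on the basic open subgroups $G_{(A)}$: an arbitrary open $H$ contains some $G_{(A)}$, and a colouring of $G/H$ pulls back along the equivariant surjection $G/G_{(A)}\twoheadrightarrow G/H$ (lifting each element of the finite target set to one representative) to reduce to that case. For $H=G_{(A)}$, transporting $c$ and the finite $\mathcal{A}$ through the dictionary, I set $B_{0}:=\cl{\bigcup_{\Gamma\in\mathcal{A}}\Gamma[A]}$, read its isomorphism type as a pair $A\leqslant B$, apply 2-(b) to obtain $\Lambda\in\PK{\mathbf{M}}{B}$ and $i$, and finally use homogeneity to choose $g\in G$ sending the inclusion of $B_{0}$ to $\Lambda$; then $c(g\cdot a)=i$ for all $a\in\mathcal{A}$, which is exactly Fact \ref{2}(2). (Here 2-(a) may be used to identify $G_{(A)}=G_{A}$ and streamline the dictionary, though the combinatorial content is carried by 2-(b).)

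The main obstacle I anticipate is the bookkeeping in the dictionary: making the three identifications—cosets $\leftrightarrow$ $\leqslant$-embeddings, $k$-colourings $\leftrightarrow$ colouring functions, and finite coset-sets $\leftrightarrow$ the sweeps $\PK{\Lambda(B)}{A}$—simultaneously $G$-equivariant, and in particular verifying that $g\cdot\Gamma=g\circ\Gamma$ carries $\PK{B_{0}}{A}$ exactly onto $\PK{\Lambda(B)}{A}$. This rests entirely on the homogeneity of the generic structure, namely that isomorphisms between finite $\leqslant$-closed substructures extend to automorphisms; I would isolate this as a lemma derived from Proposition \ref{prop:sgen}(3) before assembling the cycle.
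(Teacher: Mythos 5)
Your proposal is correct and follows essentially the same route as the paper: both arguments run through Fact \ref{2} via the identification of $G/G_{(A)}$ with $\PK{\mathbf{M}}{A}$ (justified by the $\leqslant$-homogeneity of the generic structure), use the compact $G$-flow of linear orderings for the invariant order, and reduce the general open subgroup to a basic pointwise stabilizer. Your cyclic organization $(1)\Rightarrow(3)\Rightarrow(2)\Rightarrow(1)$ and your use of the composition action (so that the stabilizer of $\iota_A$ is $G_{(A)}$ outright, rather than invoking $G_{A_0}=G_{(A_0)}$ as the paper does) are only cosmetic streamlinings of the same argument.
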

\begin{proof}
(Similar to the proof of Proposition 4.3. in \cite{KPT}) An easy
argument shows that (2) and (3) are equivalent.\\
 $1\Longrightarrow3$. Assume that $G$ is extremely amenable. Since
$LO$, the space of invariant linear orderings defined on $\mathbf{M}$,
forms a $G$-flow, it follows that the action of $G$ on $LO$ has
a fixed point. This is exactly our expected ordering.\\
To show 3-(b), fix $A\leqslant B\in\mathcal{K}$ and suppose $c:\PK{\mathbf{M}}A\longrightarrow\left\lbrace 0,1,\cdots,k-1\right\rbrace $
is a $k$-coloring. Fix $\Lambda$ a $\leqslant$-embedding of $B$
in $\mathbf{M}$ and let $B_{0}:=\Lambda\left(B\right)$. Then, $B_{0}\leqslant\mathbf{M}$.
Take $A_{0}\leqslant B_{0}$ to be the corresponding $\leqslant$-closed
copy of $A$ inside $B_{0}$ under $\Lambda$. Put $H=G_{A_{0}}=G_{\left(A_{0}\right)}$
which is an open subgroup of $G$. We can identify the set $G/H$
of left cosets of $H$ in $G$ with $\PK{\mathbf{M}}{A_{0}}$. Now
by applying Fact \ref{2} to $H$, $c$ and $\PK{\mathbf{M}}{A_{0}}$,
one can find $i$ with $0\leq i\leq k-1$ and $g\in G$ such that
$c\left(g\cdot\left(\Lambda\circ\gamma\right)\right)=i$, for all
$\gamma\in\PK{B_{0}}{A_{0}}$. Let $B'=g\left[B_{0}\right]=g\left[\Lambda\left(B\right)\right]\leqslant\mathbf{M}$.
Pick $\Lambda'\in\PK{\mathbf{M}}B$ such that $\Lambda'\left(B\right)=g\left[\Lambda\left(B\right)\right]$.
Then, $c\left(\Lambda'\circ\gamma\right)=i$, for any $\gamma\in\PK{B'}A$.\\
 $2\Longrightarrow1$. A similar argument as above shows ($2$) implies
Fact \ref{2}.2. Hence, $G$ is extremely amenable.\\
\end{proof}
\begin{defn}
\label{ram} Assume $A\leqslant B\leqslant C\in\mathcal{K}$ and $k\geq1$.
We write
\[
C\longrightarrow\left(B\right){}_{k}^{A},
\]
if for every $k$-coloring $c:\PK CA\longrightarrow\left\lbrace 0,1,\cdots,k-1\right\rbrace $,
there exists $\lambda\in\PK CB$ such that $c\left(\lambda\circ\gamma\right)$
is constant for all $\gamma\in\PK{\lambda\left(B\right)}A$. In this
case $\lambda\left(B\right)$ is called a \emph{$c$-monochromatic}
copy of $B$ in $C$. We say that the class $\left(\mathcal{K},\leqslant\right)$
has the \emph{$\leqslant$-Ramsey property} if for every $A\leqslant B\in\mathcal{K}$
and $k\geq2$, there exists $C\in\mathcal{K}$ with $B\leqslant C$
such that $C\longrightarrow\left(B\right){}_{k}^{A}.$\end{defn}
\begin{rem}
\label{2=00003Dk} A similar argument as in classical Ramsey theory
shows that if for every $A\leqslant B\in\mathcal{K}$ we have $C\longrightarrow\left(B\right){}_{2}^{A}$
for some $C\in\mathcal{K}$, then the smooth class $\left(\mathcal{K},\leqslant\right)$
has the $\leqslant$-Ramsey property (See \cite{Prom}, page 81-82).
\end{rem}
The following theorem gives the main correspondence that we have mentioned
in the introduction. The proof is similar to the proof of Proposition
4.5. in \cite{KPT}, and we give the analogues modification of the
proof in order to highlight the role of $\leqslant$ relation in the
$\left(\mathcal{K},\leqslant\right)$-generic structure.
\begin{thm}
\label{rams} The followings are equivalent:
\begin{enumerate}
\item $G$ is extremely amenable;
\item
\begin{enumerate}
\item $G$ preserves a linear ordering;
\item $\left(\mathcal{K},\leqslant\right)$ has the $\leqslant$-Ramsey
property.
\end{enumerate}
\end{enumerate}
\end{thm}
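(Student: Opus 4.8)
The plan is to deduce the statement from Proposition \ref{3}. Since clause (2-a) of the present theorem is literally clause (3-a) there, and Proposition \ref{3} already gives that $G$ is extremely amenable if and only if its conditions (3-a) and (3-b) hold, it suffices to prove that the coloring condition (2-b)/(3-b) of Proposition \ref{3} is equivalent to the $\leqslant$-Ramsey property of $\left(\mathcal{K},\leqslant\right)$. I would establish the two implications separately; the ordering clause plays no role in either and is simply carried along, so combining the equivalence with the equivalence $1\Longleftrightarrow 3$ of Proposition \ref{3} yields the theorem.

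First I would show that the $\leqslant$-Ramsey property implies the coloring condition of Proposition \ref{3}. Fix $A\leqslant B\in\mathcal{K}$ and a $k$-coloring $c:\PK{\mathbf{M}}A\to\{0,\dots,k-1\}$. By the $\leqslant$-Ramsey property choose $C\in\mathcal{K}$ with $B\leqslant C$ and $C\longrightarrow(B)^{A}_{k}$, and by genericity (Proposition \ref{prop:sgen}(3) applied to $\emptyset\leqslant\mathbf{M}$ and $\emptyset\leqslant C$) fix a $\leqslant$-embedding $\theta:C\to\mathbf{M}$, so $\theta(C)\leqslant\mathbf{M}$. Pull $c$ back by setting $c'(\delta):=c(\theta\circ\delta)$ for $\delta\in\PK CA$; this is well defined because $\delta(A)\leqslant C\leqslant\mathbf{M}$ forces $\theta(\delta(A))\leqslant\mathbf{M}$ by invariance of $\leqslant$ under embeddings together with transitivity. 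Applying $C\longrightarrow(B)^{A}_{k}$ to $c'$ yields a $c'$-monochromatic $\lambda\in\PK CB$, and then $\Lambda:=\theta\circ\lambda\in\PK{\mathbf{M}}B$ witnesses the coloring condition: one checks $\Lambda(B)\leqslant\mathbf{M}$, and every $\leqslant$-copy of $A$ inside $\Lambda(B)$ corresponds under $\theta^{-1}$ to a $\leqslant$-copy of $A$ inside $\lambda(B)$, on which $c'$ is constant, whence $c$ is constant on the corresponding copies in $\Lambda(B)$.

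For the converse I would argue by contradiction via a compactness argument. Suppose the coloring condition holds but the $\leqslant$-Ramsey property fails for some $A\leqslant B$ and some $k$: for every $C\in\mathcal{K}$ with $B\leqslant C$ there is a coloring of $\PK CA$ with no monochromatic $\leqslant$-copy of $B$. Let $\mathcal{D}$ be the family of finite $\leqslant$-closed substructures of $\mathbf{M}$; by Fact \ref{f5} it is directed under $\leqslant$ (via $\cl\cdot$), every $D\in\mathcal{D}$ lies in $\mathcal{K}$, and $\PK{\mathbf{M}}A=\bigcup_{D\in\mathcal{D}}\PK DA$ since each $\leqslant$-embedding of $A$ has finite $\leqslant$-closed image. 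In the compact space $\{0,\dots,k-1\}^{\PK{\mathbf{M}}A}$ let $X_{D}$ be the closed set of colorings having no monochromatic $\leqslant$-copy of $B$ inside $D$. Each $X_{D}$ is nonempty (extend a bad coloring of $\PK DA$ arbitrarily, or note $X_{D}$ is everything when $B\not\leqslant D$), and $\{X_{D}\}$ has the finite intersection property: for $D_{1},\dots,D_{n}$ put $D=\cl{D_{1}\cup\cdots\cup D_{n}}\in\mathcal{D}$; since $D_{i}\leqslant D$ by axiom (2) of Definition \ref{def:smooth}, any $\leqslant$-copy of $B$ in some $D_{i}$ is a $\leqslant$-copy in $D$, so a bad coloring on $D$ lies in $\bigcap_{i}X_{D_{i}}$. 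A point of $\bigcap_{D}X_{D}$ is then a $k$-coloring of $\PK{\mathbf{M}}A$ with no monochromatic $\leqslant$-copy of $B$ in $\mathbf{M}$, because each $\Lambda(B)\leqslant\mathbf{M}$ is itself a member of $\mathcal{D}$; this contradicts the coloring condition.

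The main obstacle is the bookkeeping in the compactness step: one must verify, using transitivity of $\leqslant$ together with the downward-closure axiom (2) of a smooth class, that a coloring which is bad on a large finite $\leqslant$-closed set remains bad on each smaller $\leqslant$-closed set, and that the finite $\leqslant$-closed substructures form a directed family whose copies of $B$ exhaust all $\leqslant$-copies of $B$ in $\mathbf{M}$. This is precisely where the relation $\leqslant$, absent from the classical setting of \cite{KPT}, must be tracked with care. By Remark \ref{2=00003Dk} it would in fact suffice to run the argument for $k=2$.
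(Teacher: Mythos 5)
Your proposal is correct and takes essentially the same approach as the paper: both reduce the theorem to Proposition~\ref{3} and prove that its coloring condition is equivalent to the $\leqslant$-Ramsey property, with the easy direction obtained by pulling a coloring of $\PK{\mathbf{M}}{A}$ back along a $\leqslant$-embedding of $C$ into $\mathbf{M}$. The only (cosmetic) difference is in the hard direction, where the paper glues the bad colorings $c_{E}$ together using an ultrafilter on the directed family of finite $\leqslant$-closed substructures, while you take a cluster point in the compact space $\left\{0,\dots,k-1\right\}^{\PK{\mathbf{M}}{A}}$ via the finite intersection property.
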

\begin{proof}
$1\Longrightarrow2$. We have already presented a proof for 2-(a).
Now we are going to show the $\leqslant$-Ramsey property, assuming
the extremely amenability of $G$. By Remark \ref{2=00003Dk}, we
only need to check the $\leqslant$-Ramsey property for $k=2$. Suppose,
on the contrary that, there are $A\leqslant B\in\mathcal{K}$ such
that $C\nrightarrow\left(B\right){}_{2}^{A}$ for all $C\in\mathcal{K}$. Pick $\Lambda\in\PK{\mathbf{M}}B$ and let $B_{0}:=\Lambda\left(B\right)$. Then, for every finite $E\leqslant\mathbf{M}$ with $B_{0}\leqslant E$, there exists a $2$-coloring $c_{E}:\PK EA\longrightarrow\left\lbrace 0,1\right\rbrace $
such that for any $\lambda^{\prime}\in\PK EB$ the value of $c_{E}\left(\lambda'\circ\gamma\right)$
is not constant when $\gamma\in\PK{\lambda'\left(B\right)}A$ varies. \\
Take $\mathcal{I}:=\left\{ F\leqslant M:F\subseteq_{fin}\mathbf{M}\right\} $
as an index set and for $D\in\mathcal{I}$, \sloppy let $\mathcal{X}_{D}:=\left\lbrace F\in\mathcal{I}:D\leqslant F\right\rbrace$.
From Fact $\ref{f5}$, it follows that $\mathcal{E}:=\left\lbrace \mathcal{X}_{A}:A\in\mathcal{I}\right\rbrace $
has the finite intersection property. Hence, there exists an ultra-filter
$\mathcal{U}$ on the index set $\mathcal{I}$ such that for every
finite $D\in\mathcal{I}$ the set $\mathcal{X}_{D}\in\mathcal{U}$. For each $\Gamma\in\PK{\mathbf{M}}A$ exactly one of the followings
cases holds:
\begin{enumerate}
\item $\left\lbrace E\in\mathcal{I}:\cl{\Gamma\left(A\right)B_{0}}\leqslant E\mbox{ and }c_{E}\left(\Gamma\right)=0\right\rbrace \in\mathcal{U}$,
or
\item $\left\lbrace E\in\mathcal{I}:\cl{\Gamma\left(A\right)B_{0}}\leqslant E\mbox{ and }c_{E}\left(\Gamma\right)=1\right\rbrace \in\mathcal{U}$.
\end{enumerate}
Define a $2$-coloring $c:\PK{\mathbf{M}}A\longrightarrow\left\{ 0,1\right\} $
as follows: for $\Gamma\in\PK{\mathbf{M}}A$
\[
c\left(\Gamma\right):=i\Leftrightarrow\left\lbrace E\in\mathcal{I}:\cl{\Gamma\left(A\right)B_{0}}\leqslant E\mbox{ and }c_{E}\left(\Gamma\right)=i\right\rbrace \in\mathcal{U}.
\]
Now by Proposition \ref{3}.2-(b), there are $\Lambda'\in\PK{\mathbf{M}}B$
and $i\in\left\{ 0,1\right\} $ such that $c\left(\Lambda'\circ\gamma\right)=i$,
for all $\gamma\in\PK{\Lambda'\left(B\right)}A$. For each $\gamma\in\PK{\Lambda'\left(B\right)}A$
put
\[
\mathcal{A}_{\gamma}=\left\lbrace E\in\mathcal{I}:\cl{\Lambda'\left(B\right)B_{0}}\leqslant E\mbox{ and }c\left(\Lambda'\circ\gamma\right)=c_{E}\left(\Lambda'\circ\gamma\right)=i\right\rbrace .
\]
Note that both sets
\[
\mathcal{I}_{B_{0},\gamma}:=\left\lbrace E\in\mathcal{I}:\cl{\Lambda'\circ\gamma\left(A\right)B_{0}}\leqslant E\mbox{ and }c_{E}\left(\Lambda'\circ\gamma\right)=i\right\rbrace
\]
and
\[
\mathcal{X}_{\cl{\Lambda'\left(B\right)B_{0}}}=\left\lbrace E\in\mathcal{I}:\cl{\Lambda'\left(B\right)B_{0}}\leqslant E\right\rbrace
\]
are in $\mathcal{U}$. Furthermore, $\mathcal{A}_{\gamma}\supseteq\mathcal{I}_{B_{0},\gamma}\cap\mathcal{X}_{\cl{\Lambda'\left(B\right)B_{0}}}$.
Hence, $\mathcal{A}_{\gamma}\in\mathcal{U}$. Let $E\in\bigcap_{\gamma\in\PK{\Lambda'\left(B\right)}A}\mathcal{A}_{\gamma}\ne\emptyset$.
Note that if $B_{0}\leqslant E$ then for each $\gamma\in\PK{\Lambda'\left(B\right)}A$,
 $c_{E}\left(\gamma\right)=i$. Therefore, $\lambda\left(B\right)$
is a monochromatic subset with respect to $c_{E}$ where $\lambda\in\PK EB$
which is a contradiction.\\
 $2\Longrightarrow1$ Part (b) of 2 in Proposition \ref{3} trivially
follows from the $\leqslant$-Ramsey property and part (a) of 2 in
Proposition \ref{3} follows from 2-(a). Hence, $G$ is extremely
amenable.
\end{proof}

\subsection{$\leqslant$-Ramsey property for some ab-initio classes}

In this subsection, we show certain ab-initio classes obtained from pre-dimension functions does not have the $\leqslant$-Ramsey property.

Recall the followings from \cite{Prom}.
\begin{defn}
Suppose $A=\left(V,E\right)$ is a graph where $V$ is the set of
vertices and $E$ is set of edges of $A$.
\begin{enumerate}
\item Let $e_{A}:=\mid E\left(A\right)\mid$ and $v_{A}:=\mid V\left(A\right)\mid$.
\item For a vertex $a\in A$, $\mbox{deg}\left(a\right)$ denotes the degree
of $a$ in $A$.
\item The \emph{maximum density} of $A$, denoted by $m\left(A\right)$,
\sloppy is defined as $m\left(A\right):=\mbox{max}\left\{ \dfrac{e_{B}}{v_{B}};B\subseteq A\right\}$.
\item Let $\eta\left(A\right):=\mbox{min}\left\{ \mbox{deg}\left(a\right):a\in V\left(A\right)\right\} $
and define $\eta^{*}\left(A\right):=\mbox{max}\left\{ \eta\left(B\right):B\subseteq A\right\} $.
\item For two graphs $B\subseteq C$, we abbreviate $C\stackrel{v}{\rightarrow}\left(B\right){}_{r}$
to indicate any vertex $r$-coloring of $C$ has a subgraph $B'$
isomorphic to $B$, whose all vertices are monochromatic.
\end{enumerate}
\end{defn}

\begin{defn}
The smooth class $\left(\mathcal{K},\leqslant\right)$ with HP has
the\emph{ one-point $\leqslant$-Ramsey property} if for every one-point
structure $A$ and every structure $B$ with $A\leqslant B$ and $k\geq2$,
there exists $C\in\mathcal{K}$ with $B\leqslant C$ such that $C\rightarrow\left(B\right){}_{k}^{A}$.
\end{defn}
The following lemma provides the key idea for proving Theorem \ref{nram}.
For its proof, the reader is referred to \cite{Prom} (Lemma 12.2,
page 130).
\begin{lem}
\label{4} Suppose $B$ and $C$ are two graphs such that $m\left(C\right)<\frac{1}{2}\cdot r\cdot\eta^{*}\left(B\right)$,
for some $r\geq2$. Then, $C\stackrel{v}{\nrightarrow}\left(B\right){}_{r}$. \end{lem}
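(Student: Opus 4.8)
The statement asserts that when $C$ is sufficiently sparse relative to $B$, the vertices of $C$ can be $r$-colored with no monochromatic copy of $B$, i.e.\ $C\stackrel{v}{\nrightarrow}(B)_r$. The plan is to exhibit such a coloring explicitly through a greedy argument along a degeneracy ordering, where the relevant notion of ``degeneracy'' is exactly captured by $\eta^{*}$.

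First I would translate the density hypothesis into a bound on degeneracy. For any subgraph $H\subseteq C$, the minimum degree $\eta(H)$ is at most the average degree $2e_{H}/v_{H}$, and $e_{H}/v_{H}\le m(C)$ by definition of the maximum density; hence $\eta(H)\le 2m(C)$ for every $H$, so $\eta^{*}(C)\le 2m(C)$. Combined with the hypothesis $m(C)<\frac{1}{2}\,r\,\eta^{*}(B)$ this gives $\eta^{*}(C)<r\,\eta^{*}(B)$, and since both sides are integers, $\eta^{*}(C)\le r\,\eta^{*}(B)-1$. The quantity $\eta^{*}(C)$ is precisely the degeneracy of $C$: because every subgraph has a vertex of degree at most $\eta^{*}(C)$, one may repeatedly delete a minimum-degree vertex and reverse the deletion order to obtain an enumeration $v_{1},\dots,v_{n}$ of $V(C)$ in which each $v_{i}$ has at most $\eta^{*}(C)\le r\,\eta^{*}(B)-1$ neighbors among $v_{1},\dots,v_{i-1}$ (its \emph{back-neighbors}).

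Next I would color greedily along this ordering. Write $s:=\eta^{*}(B)$. Processing $v_{1},v_{2},\dots$ in turn, when I reach $v_{i}$ it has at most $rs-1$ already-colored back-neighbors; by the pigeonhole principle at least one of the $r$ colors is used by at most $\lfloor (rs-1)/r\rfloor=s-1$ of them, and I assign such a color to $v_{i}$. Once the coloring is complete, fix any color class $S$. Restricting the enumeration to $S$, every vertex of $S$ has at most $s-1$ back-neighbors lying inside $S$, so $C[S]$ admits an ordering of back-degree at most $s-1$ and is therefore $(s-1)$-degenerate; equivalently $\eta^{*}(C[S])\le s-1<\eta^{*}(B)$.

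Finally I would derive the contradiction with containing $B$. Choose $B_{0}\subseteq B$ realizing $\eta(B_{0})=\eta^{*}(B)=s$. If some color class $S$ contained a copy of $B$, that copy would contain a copy of $B_{0}$, which is then a subgraph of $C[S]$ with minimum degree $s$, forcing $\eta^{*}(C[S])\ge s$ and contradicting the previous paragraph. Hence no color class contains a monochromatic copy of $B$, and the constructed $r$-coloring witnesses $C\stackrel{v}{\nrightarrow}(B)_r$. The only genuinely delicate point is the pigeonhole step: it is the exact inequality $\eta^{*}(C)\le r\,\eta^{*}(B)-1$---itself relying on the integrality of $\eta^{*}$ and on the factor $2$ relating minimum to average degree---that guarantees each vertex can always avoid accumulating $s$ same-colored back-neighbors; everything else is bookkeeping.
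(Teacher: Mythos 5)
Your proof is correct and complete: the chain $\eta^{*}(C)\le 2m(C)<r\,\eta^{*}(B)$, the degeneracy ordering, the greedy pigeonhole coloring keeping each colour class $(\eta^{*}(B)-1)$-degenerate, and the contradiction via a subgraph $B_{0}\subseteq B$ with $\eta(B_{0})=\eta^{*}(B)$ all check out, including the integrality step that makes the pigeonhole bound exact. The paper does not prove this lemma itself but defers to Pr\"omel (Lemma 12.2), and your argument is precisely the standard degeneracy/greedy proof given there, so nothing further is needed.
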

\begin{thm}
\label{nram} Suppose $\left(\mathcal K,\sqsubseteq\right)\in \left\{\left(\mathcal K_\alpha^+,\leqslant_\alpha\right),\left(\mathcal K_\alpha^\mu,\leqslant_\alpha\right),\left(\mathcal K_\alpha^f,<_\alpha\right)\right\}$ for $\alpha\geq1$. Then the class $\left(\mathcal K,\sqsubseteq\right)$
does not have the one-point $\sqsubseteq$-Ramsey property. \end{thm}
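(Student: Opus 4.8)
The plan is to refute the one-point $\sqsubseteq$-Ramsey property by exhibiting a single bad instance: a one-point structure $A$, a structure $B$ with $A\sqsubseteq B$, and a number of colours $r\geq 2$, for which no $C\in\mathcal K$ with $B\sqsubseteq C$ satisfies $C\longrightarrow (B)_r^A$. The engine is Lemma~\ref{4}, whose hypothesis is the density bound $m(C)<\tfrac12\, r\,\eta^*(B)$. The decisive observation that makes this work uniformly is that every member $C$ of each of the three classes already lies in $\mathcal K_\alpha^+$, so $\delta_\alpha(B')\geq 0$, i.e. $e_{B'}\leq\alpha\cdot v_{B'}$, for every subgraph $B'\subseteq C$; hence $m(C)\leq\alpha$ for \emph{all} candidate $C$ simultaneously. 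A single choice of parameters can therefore defeat every admissible $C$ at once.

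Concretely, I would take $A$ to be a single vertex $a$, set $B=K_2$, and fix an integer $r>2\alpha$, so that $\tfrac12\, r\,\eta^*(B)=\tfrac12 r>\alpha\geq m(C)$ for every $C$ in the class. Two checks then turn the one-point $\sqsubseteq$-Ramsey statement into an ordinary vertex-Ramsey statement. First, for $\alpha\geq 1$ both vertices of $K_2$ are $\leqslant_\alpha$-closed in $K_2$ (indeed $\delta_\alpha(K_2)-\delta_\alpha(\{a\})=(2\alpha-1)-\alpha=\alpha-1\geq 0$), so in particular $A\leqslant_\alpha B$; and since $\sqsubseteq$ is transitive, in any $\sqsubseteq$-copy $\lambda(B)\sqsubseteq C$ both images are $\sqsubseteq$-closed singletons of $C$, whence $\PK{\lambda(B)}{A}$ consists of exactly the two vertices of that copy. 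Consequently a $\sqsubseteq$-monochromatic copy of $B$ is precisely a vertex-monochromatic edge of $C$. Now Lemma~\ref{4}, applied to $K_2$, produces for each such $C$ a vertex $r$-colouring $\chi$ with no vertex-monochromatic $K_2$; restricting $\chi$ to the $\sqsubseteq$-closed singletons yields a colouring $c:\PK{C}{A}\to\{0,\dots,r-1\}$ under which no copy $\lambda(B)$ can be monochromatic, since its two (closed) vertices carry the two distinct $\chi$-colours forced on that edge. Thus $C\nrightarrow (B)_r^A$ for every admissible $C$, which is exactly the failure of the one-point $\sqsubseteq$-Ramsey property. This settles $(\mathcal K_\alpha^+,\leqslant_\alpha)$ and $(\mathcal K_\alpha^\mu,\leqslant_\alpha)$ verbatim, as $\mathcal K_\alpha^\mu\subseteq\mathcal K_\alpha^+$ preserves the density bound and $K_2\in\mathcal K_\alpha^\mu$.

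For the third class the relation is the strict $<_\alpha$, and this is where the only real difficulty lies. The reduction above needs the $\sqsubseteq$-closed vertices of a copy of $B$ to realize the minimum degree that feeds Lemma~\ref{4}: writing $\hat B$ for the subgraph of $B$ induced on its closed vertices, any copy of $B$ that is monochromatic on its closed vertices contains a vertex-monochromatic copy of $\hat B$, so it suffices to run Lemma~\ref{4} with $\hat B$ in place of $B$, provided $\eta^*(\hat B)\geq 1$, i.e. provided $B$ has two adjacent $<_\alpha$-closed vertices. For $\alpha>1$ this is immediate, since $\delta_\alpha(\{a\})=\alpha<2\alpha-1=\delta_\alpha(K_2)$ makes both endpoints of an edge strictly closed and the argument of the previous paragraph goes through unchanged. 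The step I expect to be the main obstacle is the boundary value $\alpha=1$: there an edge's endpoints satisfy $\delta_\alpha(\{a\})=\delta_\alpha(K_2)=1$, so neither is $<_1$-closed, closed vertices are forced to be isolated, and $\hat B$ is necessarily edgeless; hence $K_2$ cannot be reused and the density criterion of Lemma~\ref{4} supplies nothing. Carrying out the $f$-class at this value therefore requires a separate treatment of the $<_\alpha$-closed (hence sparse, near-isolated) singletons of members of $\mathcal K_1^{\mathsf f}$, and verifying that a suitable target configuration still defeats every admissible $C$ is the crux of that case.
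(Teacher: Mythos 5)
Your proposal runs on exactly the same engine as the paper's proof of Theorem \ref{nram}: choose a one--point $A$ and a target $B$ all of whose vertices are $\sqsubseteq$-closed, so that the one-point $\sqsubseteq$-Ramsey arrow collapses to the vertex-Ramsey arrow $C\stackrel{v}{\rightarrow}(B)_r$, and then refute the latter uniformly over all admissible $C$ by Lemma \ref{4} together with the observation $m(C)\leq\alpha$. The only difference is the witness: you take $B=K_2$ (so $\eta^{*}(B)=1$ and you need $r>2\alpha$), while the paper takes $B=L$, a cycle on $n\geq3$ vertices (so $\eta^{*}(L)=2$ and $r>\alpha$ suffices). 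For $\alpha>1$ both witnesses work and the arguments are interchangeable. In fact your choice is the more robust one for the classes ordered by $\leqslant_\alpha$: at $\alpha=1$ a vertex is \emph{not} $\leqslant_{1}$-closed in a cycle, since $\delta_{1}(L)=0<1=\delta_{1}(\{a\})$, so the paper's assertion that every embedding of $A$ into $L$ is $\sqsubseteq$-closed requires $\alpha>1$ (and $n>\alpha/(\alpha-1)$), whereas your computation $\delta_{\alpha}(K_2)-\delta_{\alpha}(\{a\})=\alpha-1\geq0$ covers $\leqslant_{1}$ as well. One small item you should still record is the membership check $K_2\in\mathcal K_\alpha^{\mathsf f}$, i.e.\ $\mathsf f(2)\leq 2\alpha-1$, which holds for any good $\mathsf f$ whose class contains an edge.

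The one genuine gap is the case you flag yourself, $\left(\mathcal K_{1}^{\mathsf f},<_{1}\right)$: there $\delta_{1}(\{a\})=\delta_{1}(K_2)$, no endpoint of an edge is $<_{1}$-closed, your induced subgraph $\hat B$ is edgeless, and Lemma \ref{4} yields nothing. You should know that the paper's proof does not close this case either --- for $<_{1}$ the cycle fails both closedness of its vertices and, typically, membership in $\mathcal K_{1}^{\mathsf f}$ (since $\delta_{1}(L)=0<\mathsf f(n)$ for large $n$). Indeed at $\alpha=1$ the class $\mathcal K_{1}^{\mathsf f}$ is degenerate: a connected graph has $\delta_{1}\leq1$, so the constraint $\delta_{1}(A')\geq\mathsf f(|A'|)$ with $\mathsf f$ increasing and unbounded bounds the size of connected components, and the only $<_{1}$-closed singletons are isolated vertices. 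So your instinct that this boundary value needs a separate (and different) treatment is correct; the honest resolutions are either to restrict the $\mathsf f$-class to $\alpha>1$, as is standard for the $\omega$-categorical construction, or to handle the degenerate $\alpha=1$ class by a direct ad hoc argument. Apart from this subcase, which the paper glosses over rather than proves, your argument is complete and correct.
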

\begin{proof}
Let $A$ be the singleton graph and let $L$ be a loop with $n$ vertices,
$n\geq3$ that contains $A$. It is easy to see that $L\in\mathcal{K}$,
for every $\alpha\geq1$. Furthermore, any embedding of $A$ in $L$
is $\sqsubseteq$-closed. In particular, $A\sqsubseteq L$.
An easy calculation shows for any $C\in\mathcal{K}$,
we have $m\left(C\right)\leq\alpha$. Moreover, $\eta^{*}\left(L\right)=2$. \\
Now by choosing $r>\alpha$ we have $m\left(C\right)<\frac{1}{2}\cdot r\cdot\eta^{*}\left(L\right)$,
for every $C\in\mathcal{K}$. Hence, in the light of
Lemma \ref{nram}, $C\stackrel{v}{\nrightarrow}\left(L\right){}_{r}$
for every $C\in\mathcal{K}$. On the other hand, since
all embeddings of $A$ inside $L$ are $\sqsubseteq$-closed,
it follows that $C\nrightarrow\left(L\right){}_{r}^{A}$, for every
$C\in\mathcal{K}$. Hence, the class $\left(\mathcal{K},\sqsubseteq \right)$
does not have the one-point $\sqsubseteq$-Ramsey property.
\end{proof}
In fact, the above proof shows something stronger. Suppose $\mathfrak{\mathfrak{L}^*}$
is a finite expansion of $\mathfrak{L}$ that contains a binary relation
$\prec$. Let $\mathcal{K}_{\mathfrak{L}^*}$ be the all
$\mathfrak{L}^*$-expansions of structures $C\in\mathcal{K}$,
in which the relation $\prec$ is interpreted as a linear-ordering
on the universe of $C$. Subsequently, for $A^{\mathfrak{L}},B^{\mathfrak{L}}\in\mathcal{K}_{\mathfrak{L}^*}$
we define $A^{\mathfrak{L}}\sqsubseteq^{*}B^{\mathfrak{L}}$
if and only if $A^{\mathfrak{L}^*}\subseteq B^{\mathfrak{L}^*}$ and $A\sqsubseteq B$,
where $A^{\mathfrak{L}^*}$ and $B^{\mathfrak{L}^*}$ are $\mathfrak{L}^*$-expansions
of graphs $A$ and $B$; respectively.
\begin{cor}
The class $\left(\mathcal{K}_{\mathfrak{L}^*},\sqsubseteq^{*}\right)$
does not have the one-point $\sqsubseteq^{*}$-Ramsey property.
\end{cor}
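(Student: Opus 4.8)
The plan is to mimic the proof of Theorem \ref{nram}, exploiting that the linear order $\prec$ is a \emph{free} expansion which does not affect the density obstruction supplied by Lemma \ref{4}. Fix $\alpha\geq 1$, let $A$ be the singleton graph and $L$ a loop on $n\geq 3$ vertices, exactly as in the proof of Theorem \ref{nram}. Let $A^*$ be the (essentially unique) one-point structure in $\mathfrak{L}^*$, and let $L^*$ be any $\mathfrak{L}^*$-expansion of $L$, so that $L^*\in\mathcal{K}_{\mathfrak{L}^*}$. Since $A\sqsubseteq L$ and a single point carries no nontrivial order data, we have $A^*\sqsubseteq^* L^*$. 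Fix an integer $r>\alpha$. I will show that no $C^*\in\mathcal{K}_{\mathfrak{L}^*}$ with $L^*\sqsubseteq^* C^*$ satisfies $C^*\rightarrow\left(L^*\right)_r^{A^*}$, which refutes the one-point $\sqsubseteq^*$-Ramsey property.

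The first step is to observe that colorings and embeddings of the singleton descend to the underlying graph. Writing $C$ for the $\mathfrak{L}$-reduct of a given $C^*$, every $\sqsubseteq^*$-embedding of $A^*$ into $C^*$ is simply a choice of a vertex of $C$ (every singleton is $\sqsubseteq$-closed and the order on one point is vacuous), so $\PK{C^*}{A^*}$ is in canonical bijection with the vertex set of $C$. Consequently any $r$-coloring $c:\PK{C^*}{A^*}\rightarrow\left\{0,\dots,r-1\right\}$ is literally a vertex $r$-coloring $\bar{c}$ of $C$, and conversely.

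The second step records the transfer of monochromaticity, which holds for \emph{any} such coloring. A $c$-monochromatic copy of $L^*$ is an $\sqsubseteq^*$-embedding $\lambda\in\PK{C^*}{L^*}$ for which $c(\lambda\circ\gamma)$ is constant as $\gamma$ ranges over $\PK{\lambda\left(L^*\right)}{A^*}$; since this index set enumerates exactly the vertices of $\lambda\left(L^*\right)$, the condition says precisely that all $n$ vertices of the image receive the same $\bar{c}$-color. Forgetting the order $\prec$, the image is then an unordered copy of $L$ in $C$ all of whose vertices are $\bar{c}$-monochromatic. Thus a $c$-monochromatic $L^*$ in $C^*$ yields a vertex-monochromatic $L$ in the reduct $C\in\mathcal{K}$.

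The final step derives the contradiction via Lemma \ref{4}, exactly as in Theorem \ref{nram}. Since $C\in\mathcal{K}$ we still have $m(C)\leq\alpha$, and $\eta^*(L)=2$ is unaffected by the expansion, so with $r>\alpha$ we get $m(C)<\frac{1}{2}\cdot r\cdot\eta^*(L)$; Lemma \ref{4} then provides a vertex coloring $\bar{c}$ of $C$ admitting \emph{no} monochromatic copy of $L$. Inducing the corresponding coloring $c$ on $\PK{C^*}{A^*}$, the supposed arrow $C^*\rightarrow\left(L^*\right)_r^{A^*}$ would produce a $c$-monochromatic $L^*$, hence by the second step a $\bar{c}$-monochromatic $L$ in $C$, contradicting the choice of $\bar{c}$. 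Therefore $C^*\nrightarrow\left(L^*\right)_r^{A^*}$ for every $C^*\in\mathcal{K}_{\mathfrak{L}^*}$, and $\left(\mathcal{K}_{\mathfrak{L}^*},\sqsubseteq^*\right)$ fails the one-point $\sqsubseteq^*$-Ramsey property. There is no genuine obstacle here: the only point needing care is the identification of singleton $\sqsubseteq^*$-embeddings with vertices and the attendant equivalence between monochromaticity of an ordered copy and of its unordered reduct, both of which hold because the expansion adds no structure detectable at a single vertex.
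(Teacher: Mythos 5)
Your proof is correct and is essentially the paper's own argument: the paper gives no separate proof, merely remarking that ``the above proof shows something stronger,'' and what you have written is exactly that observation spelled out --- singleton $\sqsubseteq^{*}$-embeddings are vertices, monochromaticity descends to the reduct, and Lemma \ref{4} applies to the underlying graph since $m(C)$ and $\eta^{*}(L)$ are unaffected by the expansion. The only tiny imprecision is calling the one-point $\mathfrak{L}^{*}$-structure ``essentially unique'' when $\mathfrak{L}^{*}$ may carry further relations; choosing $L^{*}$ so that all its vertices realize the same one-point expansion disposes of this.
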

In this case, the class $\left(\mathcal{K}_{\mathfrak{L}^*},\sqsubseteq^{*}\right)$
has the JEP, HP and AP. Therefore, if we take the $\left(\mathcal{K}_{\mathfrak{L}^*},\sqsubseteq^{*}\right)$-generic
structure $\mathbf{M}_{\mathfrak{L}^*}$, then it is easy
to see that this structure is formed by adding a generic linear ordering
to the $\left(\mathcal{K},\sqsubseteq\right)$-generic
structure $\mathbf{M}$. Note that by our notation $\mathbf{M}\in \left\{\mathbf{M}_\alpha,\mathbf{M}_\alpha^\mu,\mathbf{M}_\alpha^{\mathsf f}\right\}$. In the light of Theorem
\ref{rams} and the corollary above, the following theorem where $\mathfrak R$ is binary is established. It has to be noted that it seems in the case of hypergraphs (relations with arity $>$2) a similar result to Lemma \ref{4} is true.
\begin{thm}
\label{thm:extamngraph} Let $G$ be $\Aut {\left(\mathbf{M}_{\mathfrak L^*}\right)}$ where $\mathfrak L^*$ is a finite expansion $\mathfrak L$ that contains a linear-ordering relation as explained above, and $\mathbf{M}\in \left\{\mathbf{M}_\alpha,\mathbf{M}_\alpha^\mu,\mathbf{M}_\alpha^{\mathsf f}\right\}$ for $\alpha\geq 1$. Then $G$ is not extremely amenable.
\end{thm}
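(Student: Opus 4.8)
The plan is to read off non-extreme-amenability directly from the correspondence established in Theorem \ref{rams}, applied now to the \emph{ordered} class $\left(\mathcal{K}_{\mathfrak{L}^*},\sqsubseteq^{*}\right)$ and its generic structure $\mathbf{M}_{\mathfrak{L}^*}$, for each of the three choices $\mathbf{M}\in\left\{\mathbf{M}_\alpha,\mathbf{M}_\alpha^\mu,\mathbf{M}_\alpha^{\mathsf f}\right\}$. First I would check that the hypotheses of Theorem \ref{rams} are in place: as recorded in the paragraph preceding the statement, $\left(\mathcal{K}_{\mathfrak{L}^*},\sqsubseteq^{*}\right)$ is a smooth class with HP and AP (in fact JEP), and $\mathbf{M}_{\mathfrak{L}^*}$ is exactly its $\left(\mathcal{K}_{\mathfrak{L}^*},\sqsubseteq^{*}\right)$-generic structure, obtained by adding a generic linear order to $\mathbf{M}$. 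Thus Theorem \ref{rams} applies verbatim with $G=\Aut\left(\mathbf{M}_{\mathfrak{L}^*}\right)$, and it tells us that $G$ is extremely amenable \emph{if and only if} both (a) $G$ preserves a linear ordering and (b) $\left(\mathcal{K}_{\mathfrak{L}^*},\sqsubseteq^{*}\right)$ has the $\sqsubseteq^{*}$-Ramsey property.

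The crux is that for the \emph{negative} conclusion I only need to defeat condition (b); condition (a) is immaterial here (and in fact holds, since the ordering $\prec$ lies in $\mathfrak{L}^*$ and is therefore preserved by every element of $G$). So the whole theorem reduces to showing that $\left(\mathcal{K}_{\mathfrak{L}^*},\sqsubseteq^{*}\right)$ \emph{fails} the $\sqsubseteq^{*}$-Ramsey property. This is supplied by the Corollary above, which states that $\left(\mathcal{K}_{\mathfrak{L}^*},\sqsubseteq^{*}\right)$ does not have the \emph{one-point} $\sqsubseteq^{*}$-Ramsey property. Since the one-point version is precisely the instance of the $\sqsubseteq^{*}$-Ramsey property in which the base structure $A$ is a single vertex, the full property would imply its one-point specialization; contrapositively, the failure of the one-point version forces the failure of the full $\sqsubseteq^{*}$-Ramsey property. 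Hence condition 2-(b) of Theorem \ref{rams} fails, and therefore $G$ is not extremely amenable.

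There is essentially no computational content left to grind through, so I do not expect a genuine obstacle in assembling the argument; all the real work sits in Theorem \ref{nram}, its Corollary, and Theorem \ref{rams}. The one point requiring care is the uniformity across the three cases: I must make sure that for each base class --- $\left(\mathcal{K}_\alpha^+,\leqslant_\alpha\right)$, $\left(\mathcal{K}_\alpha^\mu,\leqslant_\alpha\right)$, and $\left(\mathcal{K}_\alpha^{\mathsf f},<_\alpha\right)$ --- the ordered expansion really is a smooth class with AP (so that Theorem \ref{rams} is licensed) and that the Corollary's failure of one-point Ramsey was established for that \emph{same} expanded class. Because Theorem \ref{nram} is stated uniformly over all three base classes and the Corollary is deduced from it in the same generality, this uniformity is automatic and the three instances collapse into a single line of reasoning.
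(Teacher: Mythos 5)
Your argument is correct and is essentially the paper's own (largely implicit) proof: apply Theorem \ref{rams} to the ordered class $\left(\mathcal{K}_{\mathfrak{L}^*},\sqsubseteq^{*}\right)$, whose generic structure is $\mathbf{M}_{\mathfrak{L}^*}$, and defeat condition 2-(b) via the Corollary asserting failure of the one-point $\sqsubseteq^{*}$-Ramsey property, which is a special case of the full property. The paper compresses exactly this reasoning into the sentence ``In the light of Theorem \ref{rams} and the corollary above, the following theorem \ldots is established,'' so there is nothing to add.
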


\section{Amenability of automorphism groups of generic structures}

\label{sec:3}

Here, we further continue our project, this time similar to \cite{TatchMoore2011}.
We give the correspondence between amenability of the automorphism
groups of Fra\"iss\'e-Hrushovski generic structures, and the convex
$\leqslant$-Ramsey property of the automorphism group of the structure
with respect to its smooth class, that has been defined later.

In the following subsection, we first adapt the notion of convex Ramsey
property and then study the convex $\leqslant$-Ramsey property with
a slightly different approach. We present the expected correspondence
in Theorem $\ref{thm:cors}$. Later, we prove our main result Theorem
$\ref{thm:main1}$ that shows automorphism groups of generic structures
of certain class of smooth classes does not have the convex $\leqslant$-Ramsey
property and hence they are not amenable. Theorem $\ref{thm:main1}$ provides the ingredient for the next section to investigate the convex $\leqslant_{\alpha}$-Ramsey
property of the $\left(\mathcal{K}_{\alpha},\leqslant_{\alpha}\right)$-generic
structure when the coefficient $\alpha$ is rational.

Throughout this section, $\left(\mathcal{K},\leqslant\right)$ is
a smooth class of finite relational $\mathfrak{L}$-structures with
the $\leqslant$-amalgamation property and HP, and $\mathbf{M}$
is the $\left(\mathcal{K},\leqslant\right)$-generic structure. Suppose
$A\in\mathcal{K}$ and $ N$ is a substructure of $\mathbf{M}$.
Denote $\FK{ N}A$ for the set of all finitely supported
probability measures on $\PK{ N}A$. Suppose $X\leqslant Y\leqslant Z$
are $\leqslant$-closed substructures of $\mathbf{M}$ and let $\mathtt r\in\FK ZY$.
We define $\FK{\mathtt r}X$ to be the set
\[
\left\{ \mathtt q\in\FK ZX:\exists \mathtt p\in\FK YX \forall\Gamma\in\PK YX \forall\Lambda\in\PK ZY,\mathtt q\left(\Lambda\circ\Gamma\right)=\mathtt r\left(\Lambda\right)\cdot \mathtt p\left(\Gamma\right)\right\} .
\]

\subsection{The convex \emph{$\leqslant$-}Ramsey property}
\begin{defn}
We say $\Aut\left(\mathbf{M}\right)$ has the \emph{convex $\leqslant$-Ramsey
property with respect to} $\left(\mathcal{K},\leqslant\right)$ if
for every $A,B\in\mathcal{K}$ with $A\leqslant B$ and every $2$-coloring
function $f:\PK{\mathbf{M}}A\rightarrow\left\{ 0,1\right\} $, there
exists $\mathtt p\in\FK{\mathbf{M}}B$ such that for every $\mathtt q_{1},\mathtt q_{2}\in\FK{\mathtt p}A$,
$\left|f\left(\mathtt q_{1}\right)-f\left(\mathtt q_{2}\right)\right|\leq\frac{1}{2}$.
\end{defn}

If the condition above holds for a coloring function $f$, we say
$f$ satisfies the\emph{ convex $\leqslant$-Ramsey condition}. Note
that the convex \emph{$\leqslant$-}Ramsey property demands that all
coloring functions satisfy the convex \emph{$\leqslant$-}Ramsey condition.\\
Let $A,B\in\mathcal{K}$ such that $A\leqslant B$. Fix an enumeration
$\bar{\eta}=\left(\eta_{1},\cdots,\eta_{m}\right)$ of $\PK BA$ where $\PK BA=\left\{ \eta_{1},\cdots,\eta_{m}\right\} $;
hence $m=\left|\PK BA\right|$. For each $\Lambda\in\PK{\mathbf{M}}B$
define $\Lambda\cdot\bar{\eta}:=\left(\Lambda\circ \eta_{1},\cdots,\Lambda\circ \eta_{m}\right)$,
and note that $\Lambda\circ \eta_{i}\in\PK{\mathbf{M}}A$, for each
$i\in\left\{ 1,\cdots,m\right\} $.
Let $f:\PK{\mathbf{M}}A\rightarrow\left\{ 0,1\right\} $ be a $2$-coloring
function. For each $\Lambda\in\PK{\mathbf{M}}B$ define $f\left(\Lambda\cdot\bar{\eta}\right):=\left(f\left(\Lambda\circ \eta_{1}\right),\cdots,f\left(\Lambda\circ \eta_{m}\right)\right)$
which is a finite sequence of $0$ and $1$ and hence $f\left(\Lambda\cdot\bar{\eta}\right)\in\left\{ 0,1\right\} ^{m}$.\\
Fix $\Lambda\in\PK{\mathbf{M}}B$ and let $\bar{k}:=f\left(\Lambda\cdot\bar{\eta}\right)\in\left\{ 0,1\right\} ^{m}$.
There are two possibilities for elements of $\bar{k}$:

\noun{Case 1: }$k_{i}=k_{j}$ for all $i,j\in\left\{ 1,\cdots,m\right\} $. \\
Then, we can assign a finitely supported probability measure $\mathtt p$
on $\PK{\mathbf{M}}B$ which concentrates on $\Lambda$. 
In which the following corollary holds:
\begin{cor}
\label{cor:monoc} Suppose $A,B\in\mathcal{K}$ and $A\leqslant B$.
Suppose $f:\PK{\mathbf{M}}A\rightarrow\left\{ 0,1\right\} $ is a
coloring function such that there exists $\Lambda\in\PK{\mathbf{M}}B$
where $f\left(\Lambda\cdot\bar{\eta}\right)$ is constant (or monochromatic).
Then, the convex \emph{$\leqslant$-}Ramsey condition holds for $f$.
\end{cor}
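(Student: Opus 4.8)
The plan is to witness the convex $\leqslant$-Ramsey condition for $f$ by a point mass. Let $\Lambda\in\PK{\mathbf{M}}B$ be the $\leqslant$-embedding supplied by the hypothesis, so that $f\left(\Lambda\circ\eta_{i}\right)=c$ for all $i\in\left\{1,\dots,m\right\}$ and some fixed $c\in\left\{0,1\right\}$. I would define $\mathtt p\in\FK{\mathbf{M}}B$ to be the Dirac measure concentrated on $\Lambda$, i.e. $\mathtt p\left(\Lambda\right)=1$ and $\mathtt p\left(\Lambda'\right)=0$ for every $\Lambda'\neq\Lambda$; this is a legitimate finitely supported probability measure on $\PK{\mathbf{M}}B$. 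The claim is that this $\mathtt p$ already witnesses the condition.

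First I would unwind what membership in $\FK{\mathtt p}A$ entails. Fix an arbitrary $\mathtt q\in\FK{\mathtt p}A$. By definition there is $\mathtt p'\in\FK BA$ with $\mathtt q\left(\Lambda'\circ\Gamma\right)=\mathtt p\left(\Lambda'\right)\cdot\mathtt p'\left(\Gamma\right)$ for all $\Gamma\in\PK BA$ and $\Lambda'\in\PK{\mathbf{M}}B$. Since $\mathtt p$ is concentrated on $\Lambda$, every product with $\Lambda'\neq\Lambda$ receives $\mathtt q$-mass $0$, while $\mathtt q\left(\Lambda\circ\eta_{i}\right)=\mathtt p'\left(\eta_{i}\right)$. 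Because $\Lambda$ is injective, the embeddings $\Lambda\circ\eta_{1},\dots,\Lambda\circ\eta_{m}$ are pairwise distinct, so their $\mathtt q$-masses sum to $\sum_{i}\mathtt p'\left(\eta_{i}\right)=1$. As $\mathtt q$ is a probability measure, it therefore carries no mass outside $\left\{\Lambda\circ\eta_{1},\dots,\Lambda\circ\eta_{m}\right\}$: the support collapses onto these finitely many $\leqslant$-copies of $A$.

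It then remains to evaluate the linearly extended coloring $f\left(\mathtt q\right)=\sum_{\gamma\in\PK{\mathbf{M}}A}\mathtt q\left(\gamma\right)f\left(\gamma\right)$. By the previous step this reduces to $\sum_{i=1}^{m}\mathtt p'\left(\eta_{i}\right)f\left(\Lambda\circ\eta_{i}\right)$, and invoking monochromaticity $f\left(\Lambda\circ\eta_{i}\right)=c$ yields $f\left(\mathtt q\right)=c\sum_{i=1}^{m}\mathtt p'\left(\eta_{i}\right)=c$. Since $\mathtt q\in\FK{\mathtt p}A$ was arbitrary, $f$ takes the single value $c$ on all of $\FK{\mathtt p}A$; in particular $\left|f\left(\mathtt q_{1}\right)-f\left(\mathtt q_{2}\right)\right|=0\leq\frac{1}{2}$ for any $\mathtt q_{1},\mathtt q_{2}\in\FK{\mathtt p}A$, which is precisely the convex $\leqslant$-Ramsey condition.

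There is no genuine obstacle here, as the point mass trivializes the barycentre computation; the only thing to handle with care is the bookkeeping in the definition of $\FK{\mathtt p}A$. Specifically, one must confirm that concentrating $\mathtt p$ on a single \emph{injective} embedding $\Lambda$ forces the support of every $\mathtt q$ onto the $m$ distinct compositions $\Lambda\circ\eta_{i}$ (so that no stray copies of $A$ contribute mass), and that $f$ extended as an expectation is then a convex combination of the \emph{equal} values $f\left(\Lambda\circ\eta_{i}\right)$. Both points are immediate once the injectivity of $\Lambda$ and the normalization $\sum_{i}\mathtt p'\left(\eta_{i}\right)=1$ are noted.
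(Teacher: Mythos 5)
Your proposal is correct and follows exactly the paper's intended argument: the paper's ``Case 1'' discussion preceding the corollary simply says to take the finitely supported probability measure concentrated on $\Lambda$, and you have filled in the routine verification that every $\mathtt q\in\FK{\mathtt p}A$ is then supported on the $\Lambda\circ\eta_{i}$ and evaluates to the constant colour, so any two such measures differ by $0\leq\frac{1}{2}$.
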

\noun{Case 2. }There are $i,j\in\left\{ 1,\cdots,m\right\} $ such
that $k_{i}=0$ and $k_{j}=1$. \\
Then, we can show the following lemma which is needed for Theorem
$\ref{lem:matrixRa}$.
\begin{lem}
\label{lem:free-3} Suppose $\left(\mathcal{K},\leqslant\right)$
has the free-amalgamation property. Then, for every $v\in\left\{ 1,\cdots,m\right\} $
and $w\in\left\{ 0,1\right\} $, there is $\bar{k}^{v,w}:=\left(k_{1}^{v,w},\cdots,k_{m}^{v,w}\right)\in\left\{ 0,1\right\} ^{m}$
with $k_{v}^{v,w}=w$ such that there exist infinitely many distinct
$\Lambda_{v,w}^{s}\in\PK{\mathbf{M}}B$, $s<\omega$ with $f\left(\Lambda_{v,w}^{s}\cdot\bar{\eta}\right)=\bar{k}^{v,w}$. \end{lem}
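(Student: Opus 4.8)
The plan is, for a fixed position $v$ and colour $w$, to pin down a single $\leqslant$-closed copy of $A$ in $\mathbf{M}$ of colour $w$, to realise it as the $v$-th marked copy of $A$ inside infinitely many distinct $\leqslant$-copies of $B$, and finally to read off a common pattern by pigeonhole.

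First I would record that both colours are attained on $\PK{\mathbf{M}}A$. Since we are in Case 2, the fixed $\Lambda\in\PK{\mathbf{M}}B$ satisfies $f\left(\Lambda\circ\eta_i\right)=0$ and $f\left(\Lambda\circ\eta_j\right)=1$ for some $i,j$; as a composite of $\leqslant$-embeddings is again a $\leqslant$-embedding, $\Lambda\circ\eta_i$ and $\Lambda\circ\eta_j$ are elements of $\PK{\mathbf{M}}A$ of colours $0$ and $1$ respectively. Hence for each $w\in\left\{0,1\right\}$ I may fix $\Gamma_w\in\PK{\mathbf{M}}A$ with $f\left(\Gamma_w\right)=w$, and set $A^{*}:=\Gamma_w\left[A\right]\leqslant\mathbf{M}$. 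I would assume throughout that $A\subsetneq B$, so that $A_v:=\eta_v\left[A\right]\subsetneq B$ for every $v$; this is the relevant case and it is precisely what makes the infinitude below possible.

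The main work is to produce infinitely many $\Lambda'\in\PK{\mathbf{M}}B$ with $\Lambda'\circ\eta_v=\Gamma_w$, for then $f\left(\Lambda'\circ\eta_v\right)=f\left(\Gamma_w\right)=w$, i.e. the $v$-th coordinate of $f\left(\Lambda'\cdot\bar{\eta}\right)$ equals $w$. I would transport $B$ along the isomorphism $\Gamma_w\circ\eta_v^{-1}:A_v\to A^{*}$, so that $A^{*}$ plays the role of the $v$-th marked copy. For each $n<\omega$, using the free-amalgamation property repeatedly, form the iterated free amalgam $D_n$ of $n$ copies of $B$ over $A_v\cong A^{*}$; this lies in $\mathcal{K}$, and by the standard property that each factor of a free amalgam is $\leqslant$-closed in it, every copy of $B$ is $\leqslant$-closed in $D_n$, $A^{*}$ is their common $v$-th copy, and $A^{*}\leqslant D_n$. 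Since $A^{*}\leqslant\mathbf{M}$, Proposition \ref{prop:sgen}(3) embeds $D_n$ into $\mathbf{M}$ over $\mathrm{id}_{A^{*}}$ with $\leqslant$-closed image; restricting this embedding to each of the $n$ factors yields (by transitivity of $\leqslant$) $n$ genuine elements of $\PK{\mathbf{M}}B$, each satisfying $\Lambda'\circ\eta_v=\Gamma_w$. These are pairwise distinct because $A_v\subsetneq B$ forces the factors to be disjoint off $A^{*}$. As $n$ is arbitrary, the set of $\Lambda'\in\PK{\mathbf{M}}B$ with $\Lambda'\circ\eta_v=\Gamma_w$ is infinite.

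Finally I would apply pigeonhole. Each such $\Lambda'$ determines a value $f\left(\Lambda'\cdot\bar{\eta}\right)\in\left\{0,1\right\}^{m}$ whose $v$-th coordinate is $w$, and there are at most $2^{m}$ possible values, so some single pattern $\bar{k}^{v,w}$ with $k^{v,w}_v=w$ is realised by infinitely many of the $\Lambda'$; enumerating these as $\Lambda_{v,w}^{s}$ $\left(s<\omega\right)$ completes the proof. The step I expect to be the crux is the infinitude of copies over the fixed $A^{*}$: this is exactly where the free-amalgamation hypothesis is indispensable, since it is what lets us glue arbitrarily many copies of $B$ along the single colour-$w$ copy $A^{*}$ while simultaneously staying inside $\mathcal{K}$ and keeping each copy $\leqslant$-closed in $\mathbf{M}$.
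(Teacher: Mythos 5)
Your proof is correct and follows essentially the same route as the paper: use free amalgamation together with genericity to produce infinitely many $\leqslant$-embeddings of $B$ whose $v$-th marked copy of $A$ is a fixed colour-$w$ copy, then apply pigeonhole to the $2^{m}$ possible colouring patterns. The only (cosmetic) difference is that you glue the copies of $B$ directly over a chosen colour-$w$ copy $A^{*}$, whereas the paper first produces them over $\Lambda\circ\eta_{v}\left(A\right)$ and then transports them by an automorphism of $\mathbf{M}$ onto the colour-$w$ copy $\Lambda\circ\eta_{j}\left(A\right)$; your version simply bypasses that homogeneity step.
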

\begin{proof}
Let $v\in\left\{ 1,\cdots,m\right\} $ and $w\in\left\{ 0,1\right\} $.
Note that $\eta_{v}$ corresponds to a $\leqslant$-embedding of $A$
into $B$. By our assumption $k_{j}=w$ or $k_{i}=w$. Without loss
of generality, let $k_{j}=w.$ By the $\leqslant$-genericity of $\mathbf{M}$
and the free-amalgamation property, there are infinitely many distinct
embeddings $\Lambda_{s}$ for $s<\omega$ such that $\Lambda_{s}\circ \eta_{v}=\Lambda\circ \eta_{v}$.
Since $\Lambda\circ \eta_{v}\left(A\right)$ and $\Lambda\circ \eta_{j}\left(A\right)$
$\leqslant$-closed and isomorphic, there exists an automorphism $g$
of $\mathbf{M}$ such that $g\left[\Lambda\circ \eta_{v}\left(A\right)\right]=\Lambda\circ \eta_{j}\left(A\right)$.
It is easy to check $g\cdot\left(\Lambda_{s}\circ \eta_{v}\right)=\Lambda\circ \eta_{j}$
and hence $f\left(g\cdot\left(\Lambda_{s}\circ \eta_{v}\right)\right)=w$.
Let $\Sigma_{v}:=\left\{ g\cdot\Lambda_{s}:s<\omega\right\}$. Now
consider $\left\{ f\left(\Lambda\cdot\bar{\eta}\right)\in\left\{ 0,1\right\} ^{m}:\Lambda\in\Sigma_{v}\right\} $.
Since $\left|\Sigma_{v}\right|$ is infinite, there is $\bar{k}^{v,w}\in\left\{ 0,1\right\} ^{m}$
such that $f\left(\Lambda_{v,w}^{s}\cdot\bar{\eta}\right)=\bar{k}^{v,w}$
for infinitely many distinct $\Lambda_{v,w}^{s}$ 's in $\Sigma_{v}$.
Note that $k_{v}^{v,w}=k_{j}=w$, and hence we are done.\end{proof}
\begin{rem}
\label{rem:28} In general for any smooth class $\left(\mathcal{K},\leqslant\right)$,
one can easily modify the argument above to guarantee that under the
assumption of AP, there exists at least one embedding with the desired
property.
\end{rem}
Suppose $\mathtt q_{1},\mathtt q_{2}$ are two elements of $\FK{\mathbf{M}}A$.
Let $\left\{ \Gamma_{i}:i\in I\right\} $ to be an enumeration of
all elements $\PK{\mathbf{M}}A$; without repetition. Note that $\left|I\right|=\omega$
when $\left(\mathcal{K},\leqslant\right)$ has the free-amalgamation
property. Now, we define $w_{\mathtt q_{1},\mathtt q_{2}}^{i}:=\mathtt q_{1}\left(\Gamma_{i}\right)-\mathtt q_{2}\left(\Gamma_{i}\right)$.
Since by our definition $\mathtt q_{1},\mathtt q_{2}$ are finitely supported
probability measures, \sloppy it follows $I_{0}:=\left\{ i\in I:\mathtt q_{1}\left(\Gamma_{i}\right)=\mathtt q_{2}\left(\Gamma_{i}\right)=0\right\} $ is cofinite. Moreover, $\sum_{i\in I}w_{\mathtt q_{1},\mathtt q_{2}}^{i}=\sum_{i\in I}\mathtt q_{1}\left(\Gamma_{i}\right)-\sum_{i\in I}\mathtt q_{2}\left(\Gamma_{i}\right)=1-1=0$.
Let $\mathtt r\in\FK{\mathbf{M}}B$. Then, for $\mathtt q_{1},\mathtt q_{2}\in\FK{\mathtt r}A$
\[
f\left(\mathtt q_{1}\right)-f\left(\mathtt q_{2}\right)=\sum_{i\in I}f\left(\Gamma_{i}\right)\cdot\left(\mathtt q_{1}\left(\Gamma_{i}\right)-\mathtt q_{2}\left(\Gamma_{i}\right)\right)=\sum_{i\in I}w_{\mathtt q_{1},\mathtt q_{2}}^{i}\cdot f\left(\Gamma_{i}\right).
\]
Now fix $\left\{ \Lambda_{j}:j\in J\right\} $ to be an enumeration
of all elements $\PK{\mathbf{M}}B$; without repetition. Since $\mathtt r$
is finitely supported probability measure, there is a finite $J_{\mathtt r}\subseteq J$
such that $\mathtt r\left(\Lambda_{j}\right)\neq0$ if and only if $j\in J_{\mathtt r}$.
For $\mathtt q_{1},\mathtt q_{2}\in\FK{\mathtt r}A$ it is clear that $w_{\mathtt q_{1},\mathtt q_{2}}^{i}=\mathtt q_{1}\left(\Gamma_{i}\right)=\mathtt q_{2}\left(\Gamma_{i}\right)=0$
when $\Gamma_{i}\left(A\right)\nsubseteq\Lambda_{j}\left(B\right)$,
for all $j\in J_{\mathtt r}$. Let $\mathtt q_{1}',\mathtt q_{2}'\in\FK BA$
such that $\mathtt q_{1}=\mathtt r\cdot\mathtt q_{1}'$ and $\mathtt q_{2}=\mathtt r\cdot\mathtt q_{2}'$.
Let $I_{\mathtt r}$ be the finite subset of $I$ such that $i\in I_{\mathtt r}$
if and only if $\Gamma_{i}\left(A\right)\subseteq\Lambda_{j}\left(B\right)$,
for some $j\in J_{\mathtt r}$. Therefore,
\[
f\left(\mathtt q_{1}\right)-f\left(\mathtt q_{2}\right)=\sum_{i\in I_{\mathtt r}}w_{\mathtt q_{1},\mathtt q_{2}}^{i}\cdot f\left(\Gamma_{i}\right)=\sum_{j\in J_{\mathtt r}}r_{j}\cdot\left(\sum_{i\in I_{\mathtt r};\Gamma_{i}\left(A\right)\subseteq\Lambda_{j}\left(B\right)}w_{\mathtt q_{1}',\mathtt q_{2}'}^{i}\cdot f\left(\Gamma_{i}\right)\right);
\]
where $r_{j}$'s are the coefficients calculated from $\mathtt r$: namely
$r_{j}=\mathtt r\left(\Lambda_{j}\right)$ for $j\in J_{\mathtt r}$. Note
that $\sum_{j\in J_{\mathtt r}}r_{j}=1$. We have already fixed an enumeration
of $\PK BA$. Then, it follows that
\[
f\left(\mathtt q_{1}\right)-f\left(\mathtt q_{2}\right)=\sum_{j\in J_{\mathtt r}}r_{j}\cdot\left(\sum_{1\leq i\leq m}w_{\mathtt q_{1}',\mathtt q_{2}'}^{i}\cdot f\left(\Lambda_{j}\circ \eta_{i}\right)\right).
\]
Denote $\bar{\mathrm f}_{j}:=\begin{pmatrix}f\left(\Lambda_{j}\circ \eta_{1}\right) &  & \cdots &  & f\left(\Lambda_{j}\circ \eta_{m}\right)\end{pmatrix}_{1\times m}$
for $j\in J_{\mathtt r}$ which we call it the \emph{coloring matrix}
of $\Lambda_{j}$. We can demonstrate all the calculations above in
the following matrix presentation:
\[
f\left(\mathtt q_{1}\right)-f\left(\mathtt q_{2}\right)=\begin{pmatrix}r_{j_{1}} &  & \cdots &  & r_{j_{t}}\end{pmatrix}_{1\times r}\times\begin{pmatrix}\bar{\mathrm f}_{1}\\
\\
\vdots\\
\\
\bar{\mathrm f}_{t}
\end{pmatrix}_{r\times m}\times\begin{pmatrix}w_{\mathtt q_{1}',\mathtt q_{2}'}^{1}\\
\\
\vdots\\
\\
w_{\mathtt q_{1}',\mathtt q_{2}'}^{m}
\end{pmatrix}_{m\times1};
\]
where $r:=\left|J_{\mathtt r}\right|$ and $J_{\mathtt r}=\left\{ j_{1},\cdots,j_{r}\right\} $.  We call the matrix $\begin{pmatrix}w_{\mathtt q_{1}',\mathtt q_{2}'}^{1}\\
\\
\vdots\\
\\
w_{\mathtt q_{1}',\mathtt q_{2}'}^{m}
\end{pmatrix}_{m\times1}$  a \emph{weight matrix}. Denote $\mathrm R^{\mathtt r}$ for the matrix $\begin{pmatrix}r_{j_{1}} &  & \cdots &  & r_{j_{r}}\end{pmatrix}_{1\times r}$
of coefficients of $\mathtt r$, which we call it a \emph{probability
matrix}. Denote $\mathrm F^{f}$ for the matrix $\begin{pmatrix}\bar{\mathrm f}_{1}\\
\\
\vdots\\
\\
\bar{\mathrm f}_{r}
\end{pmatrix}_{r\times m}$.
\begin{defn}
A $m\times1$-matrix $\mathrm W$ is called a\emph{ Dirac-weight} \emph{matrix}
if there are exactly one entry in $\mathrm W$ with value $1$, exactly one
entry value $-1$ and all the other entries of $\mathrm W$ are $0$.
\end{defn}
It is clear that there are at most $2\cdot\binom{m}{2}$ many different
Dirac-weight\emph{ }matrices.
\begin{lem}
A coloring function $f$ satisfies the convex $\leqslant$-Ramsey
condition if and only if there is a positive real valued probability
$1\times r$-matrix $\mathrm R$ such that $\mathrm R\times \mathrm F^{f}\times \mathrm W\leq\frac{1}{2}$,
for every Dirac-weight\emph{ }matrix $\mathrm W$.\end{lem}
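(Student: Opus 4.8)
The plan is to read the whole statement off the matrix identity
$f\left(\mathtt q_{1}\right)-f\left(\mathtt q_{2}\right)=\mathrm R^{\mathtt r}\times\mathrm F^{f}\times\mathrm W$
established in the computation preceding the lemma, where $\mathrm R^{\mathtt r}$ is the probability matrix of some $\mathtt r\in\FK{\mathbf{M}}B$, $\mathrm F^{f}$ its coloring matrix, and $\mathrm W$ the weight matrix attached to a pair $\mathtt q_{1},\mathtt q_{2}\in\FK{\mathtt r}A$. Once this identity is in hand, the entire content of the lemma is a convex-geometry translation. I would first record that the weight matrices are exactly the vectors $\mathtt q_{1}'-\mathtt q_{2}'$ with $\mathtt q_{1}',\mathtt q_{2}'\in\FK BA$, that is, the differences of two probability vectors supported on $\PK BA=\left\{\eta_{1},\dots,\eta_{m}\right\}$. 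Writing $\mathtt q_{1}'-\mathtt q_{2}'=\sum_{i,j}\mathtt q_{1}'\left(\eta_{i}\right)\,\mathtt q_{2}'\left(\eta_{j}\right)\,\left(e_{i}-e_{j}\right)$ exhibits every weight matrix as a convex combination of Dirac-weight matrices, so the set of weight matrices is precisely $\mathrm{conv}\left\{e_{i}-e_{j}:i\ne j\right\}$. Two features of this polytope will be used: its extreme points are exactly the Dirac-weight matrices, and this set is closed under negation (the negative of a Dirac-weight matrix swaps the $\pm1$ entries).

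For the direction $\left(\Rightarrow\right)$, assume $f$ satisfies the convex $\leqslant$-Ramsey condition, witnessed by some $\mathtt p\in\FK{\mathbf{M}}B$, and set $\mathrm R:=\mathrm R^{\mathtt p}$, which is positive real valued by the very definition of the probability matrix (its entries are the nonzero values $\mathtt p\left(\Lambda_{j}\right)$). Given a Dirac-weight matrix $\mathrm W$ with entry $1$ in position $v$ and $-1$ in position $u$, I would realize it by the pair $\mathtt q_{1},\mathtt q_{2}\in\FK{\mathtt p}A$ obtained from pairing $\mathtt p$ with the Dirac measures concentrated on $\eta_{v}$ and on $\eta_{u}$; concretely $\mathtt q_{1}\left(\Lambda\circ\Gamma\right)=\mathtt p\left(\Lambda\right)\cdot\left[\Gamma=\eta_{v}\right]$ and $\mathtt q_{2}\left(\Lambda\circ\Gamma\right)=\mathtt p\left(\Lambda\right)\cdot\left[\Gamma=\eta_{u}\right]$. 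These lie in $\FK{\mathtt p}A$ by definition, their associated weight matrix is exactly $\mathrm W$, and so the identity gives $\mathrm R\times\mathrm F^{f}\times\mathrm W=f\left(\mathtt q_{1}\right)-f\left(\mathtt q_{2}\right)\le\frac{1}{2}$, as required.

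For $\left(\Leftarrow\right)$, suppose $\mathrm R=\mathrm R^{\mathtt r}$ satisfies $\mathrm R\times\mathrm F^{f}\times\mathrm W\le\frac{1}{2}$ for every Dirac-weight matrix $\mathrm W$, and take $\mathtt p:=\mathtt r$. For arbitrary $\mathtt q_{1},\mathtt q_{2}\in\FK{\mathtt p}A$ with weight matrix $\mathrm W_{0}$, I would expand $\mathrm W_{0}=\sum_{k}\lambda_{k}\mathrm W_{k}$ as a convex combination of Dirac-weight matrices and use linearity to obtain $f\left(\mathtt q_{1}\right)-f\left(\mathtt q_{2}\right)=\sum_{k}\lambda_{k}\left(\mathrm R\times\mathrm F^{f}\times\mathrm W_{k}\right)\le\frac{1}{2}$; applying the same reasoning to $-\mathrm W_{0}$, which is again a convex combination of Dirac-weight matrices by closure under negation, yields $f\left(\mathtt q_{2}\right)-f\left(\mathtt q_{1}\right)\le\frac{1}{2}$, and hence $\left|f\left(\mathtt q_{1}\right)-f\left(\mathtt q_{2}\right)\right|\le\frac{1}{2}$. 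The step I expect to demand the most care is this passage between the finite family of extreme points and the infinite family of admissible pairs: I must verify both that every weight matrix genuinely decomposes as a convex combination of Dirac-weight matrices (so the one-sided bound propagates from the vertices to all of $\FK{\mathtt p}A$) and, in the forward direction, that each Dirac-weight matrix is realized by a legitimate pair in $\FK{\mathtt p}A$, so that quantifying over the finitely many Dirac-weight matrices is exactly equivalent to the two-sided bound over all admissible pairs.
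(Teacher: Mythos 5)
Your proof is correct, and in the nontrivial direction it takes a genuinely different route from the paper's. For the implication from the Dirac-weight bound back to the convex $\leqslant$-Ramsey condition, the paper fixes an arbitrary weight matrix $\mathrm V$, splits it into its positive part $\mathrm V^{+}$ and the rest, invokes Corollary \ref{cor:monoc} to discard the case of a monochromatic row, and then builds for each row $j$ of $\mathrm F^{f}$ a single tailored Dirac-weight matrix $\mathrm W_{j}$ (a $-1$ where $f_{j,\cdot}$ vanishes, a $+1$ at a positively weighted position where $f_{j,\cdot}=1$), concluding with an unspelled-out ``straightforward calculation.'' You instead identify the weight matrices as exactly the differences of two probability vectors on $\PK BA$ and use the product-coupling identity $\mathtt q_{1}'-\mathtt q_{2}'=\sum_{i,j}\mathtt q_{1}'\left(\eta_{i}\right)\mathtt q_{2}'\left(\eta_{j}\right)\left(e_{i}-e_{j}\right)$ to write every weight matrix as a convex combination of Dirac-weight matrices, so that the bound propagates from the finitely many vertices to all admissible pairs by linearity, and the two-sided estimate follows from closure of the vertex set under negation. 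This is cleaner, fully explicit where the paper is elliptical, and it also fleshes out the forward direction (which the paper labels ``Obvious'') by realizing each Dirac-weight matrix through the pair of measures in $\FK{\mathtt p}A$ obtained by coupling $\mathtt p$ with the point masses at $\eta_{v}$ and $\eta_{u}$. The only point to tidy is that the diagonal terms $i=j$ in your decomposition contribute the zero matrix, which is not itself a Dirac-weight matrix; either absorb $0$ as the average of $e_{i}-e_{j}$ and $e_{j}-e_{i}$, or note that a subconvex combination already suffices because the bound $\frac{1}{2}$ is nonnegative.
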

\begin{proof}
($\Longrightarrow$) Obvious.\\
($\Longleftarrow$) Suppose $\mathrm V$ is a weight $m\times1$-matrix which
is not $0$ everywhere. Let $\mathrm V^{+}:=\left\{ v\in \mathrm V:v>0\right\} $
and $\mathrm V^{*}:=\left\{ v\in \mathrm V:v\notin \mathrm V^{+}\right\} $. In the light
of Corollary $\ref{cor:monoc}$, we only need to check the cases that
there are no monochromatic coloring matrices of $\leqslant$-closed
copies of $B$. One can show that $\sum_{v_{i}\in \mathrm V^{+}}f_{j,i}.v_{i}\leq\sum_{v_{i}\in \mathrm V^{+}}v_{i}\leq1$,
for all $1\leq j\leq r$. For each $1\leq j\leq r$ let $\mathrm W_{j}$ be
a Dirac-weight matrix such that
\begin{itemize}
\item $w_{l,1}=-1$ for some $1\leq l\leq m$ whenever $f_{j,l}=0$;
\item $w_{i,1}=1$ for some $1\leq i\leq m$ with $v_{i}\in \mathrm V^{+}$ and
$f_{j,i}=1$; In case there no such $v_{i}\in \mathrm V^{+}$ with $f_{j,i}=1$,
choose any $i\neq l$ and let $w_{i,1}=1$.
\end{itemize}
By the assumption, there is a real valued probability $1\times r$-matrix
$\mathrm R$ such that $\mathrm R\times \mathrm F^{f}\times \mathrm W_{j}\leq\frac{1}{2}$, for each
$1\leq j\leq r$. A straightforward calculation shows that $\mathrm R\times \mathrm F^{f}\times \mathrm V\leq\frac{1}{2}$.
\end{proof}

\subsection{The convex $\leqslant$-Ramsey property and its correspondence with
amenability}

\label{sub:3.2}

Similar to Section $\ref{sub:rams}$, we investigate the correspondence
between the amenability of the automorphism group of the generic structure
of a smooth class, and its convex $\leqslant$-Ramsey property. In
\cite{TatchMoore2011}, similar correspondence has been given for
the case of automorphism groups of Fra\"iss\'e-limit structures.
\begin{thm}
\label{thm:cors} Suppose $\mathbf{M}$ is the $\left(\mathcal{K},\leqslant\right)$-generic
structure of a smooth class $\left(\mathcal{K},\leqslant\right)$
with HP and AP. Then, the followings are equivalent:
\begin{enumerate}
\item $\Aut\left(\mathbf{M}\right)$ has the convex $\leqslant$-Ramsey
property\emph{ }with respect to $\left(\mathcal{K},\leqslant\right)$.
\item For every $A,B\in\mathcal{K}$ with $A\leqslant B$, there is $C\in\mathcal{K}$
such that $B\leqslant C$ and for every $f:\PK CA\rightarrow\left\{ 0,1\right\} $
there is $\mathtt p\in\FK CB$ such that for every $\mathtt q_1,\mathtt q_2\in\FK{\mathtt p}A$,
\[
\left|f\left(\mathtt q_1\right)-f\left(\mathtt q_2\right)\right|\leq\frac{1}{2}.
\]
\item For every $A,B\in\mathcal{K}$ with $A\leqslant B$ and every $\epsilon>0$,
there is $C\in\mathcal{K}$ such that $B\leqslant C$ and for every
$f:\PK CA\rightarrow\left[0,1\right]$ there is $\mathtt p\in\FK CB$
such that for every $\mathtt q_1,\mathtt q_2\in\FK{\mathtt p}A$,
\[
\left|f\left(\mathtt q_1\right)-f\left(\mathtt q_2\right)\right|\leq\epsilon.
\]
\item For every $A,B\in\mathcal{K}$ with $A\leqslant B$ and every $\epsilon>0$
and $n\in\mathbb{N}$, there is $C\in\mathcal{K}$ such that $B\leqslant C$
for every sequence of functions $f_{i}:\PK CA\rightarrow\left[0,1\right]$
with $i<n$, there is $\mathtt p\in\FK CB$ such that for every $\mathtt q_1,\mathtt q_2\in\FK{\mathtt p}A$
and $i<n$,
\[
\left|f_{i}\left(\mathtt q_1\right)-f_{i}\left(\mathtt q_2\right)\right|\leq\epsilon.
\]
\item $\Aut\left(\mathbf{M}\right)$ is amenable.
\end{enumerate}
\end{thm}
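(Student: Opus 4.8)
The plan is to establish the cycle $(1)\Rightarrow(2)\Rightarrow(3)\Rightarrow(4)\Rightarrow(5)\Rightarrow(1)$, following the template of \cite{TatchMoore2011}. The reverse specializations $(4)\Rightarrow(3)\Rightarrow(2)$ are immediate (take $n=1$, then restrict to $\{0,1\}$-valued colorings with $\epsilon=\tfrac12$), so only the listed direction carries content. Two steps are essentially bookkeeping ($(1)\Rightarrow(2)$ and the refinement $(3)\Rightarrow(4)$) and two are the genuine core: the amplification $(2)\Rightarrow(3)$ and the bridges to amenability $(4)\Rightarrow(5)$ and $(5)\Rightarrow(1)$. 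Throughout I would freely use that $\mathbf{M}$ is homogeneous for $\leqslant$-closed substructures, an immediate back-and-forth consequence of Proposition~\ref{prop:sgen}(3); in particular $G:=\Aut(\mathbf{M})$ acts transitively on $\PK{\mathbf{M}}A$ for each $A\in\mathcal{K}$, and any $\leqslant$-embedding of a finite $\leqslant$-closed set into $\mathbf{M}$ extends to an element of $G$.

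For $(5)\Rightarrow(1)$ I would use the dictionary between finitely supported measures on $\PK{\mathbf{M}}B$ and finitely supported probability measures on $G$. Fix $A\leqslant B$, a coloring $f:\PK{\mathbf{M}}A\to\{0,1\}$, a reference $\Lambda_0\in\PK{\mathbf{M}}B$, and the enumeration $\PK BA=\{\eta_1,\dots,\eta_m\}$. The functions $\chi_i(g):=f\bigl(g\cdot(\Lambda_0\circ\eta_i)\bigr)$ depend only on the restriction of $g$ to a finite set, hence lie in $\mathrm{RUC}(G)$, and by transitivity any two of them differ by a right translation $(R_h\chi)(g):=\chi(gh)$. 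Amenability supplies a right-invariant mean $\mathfrak m$ on $\mathrm{RUC}(G)$, so all the values $\mathfrak m(\chi_i)$ coincide, say $=c$. Approximating $\mathfrak m$ weak-$*$ by a finitely supported probability measure $\mu$ on $G$ accurate on the finitely many $\chi_i$, and setting $\mathtt p:=\sum_g\mu(g)\,\delta_{g\cdot\Lambda_0}\in\FK{\mathbf{M}}B$, each position-average $\sum_\Lambda\mathtt p(\Lambda)f(\Lambda\circ\eta_i)=\mu(\chi_i)$ lies within $\tfrac14$ of $c$; by the Dirac-weight reformulation established above this yields the convex $\leqslant$-Ramsey condition for $f$.

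The converse $(4)\Rightarrow(5)$ is the heart of the correspondence, and here I would verify the Reiter--Day form of amenability. Given a finite $\leqslant$-closed $A_0\leqslant\mathbf{M}$, colorings $F_1,\dots,F_n:\PK{\mathbf{M}}A\to[0,1]$ (with $\chi_i(g):=F_i(g\cdot\Gamma_0)$ for the reference embedding $\Gamma_0$ of $A_0$), a finite $H\subseteq G$, and $\epsilon>0$, I must produce a finitely supported probability measure $\mu$ on $G$ with $|\mu(R_h\chi_i)-\mu(\chi_i)|<\epsilon$ for all $h\in H$, $i\leq n$. I would set $B_0:=\cl{A_0\cup\bigcup_{h\in H}h[A_0]}$, let $\eta_0,\eta_h\in\PK BA$ record the positions of $A_0$ and of $h[A_0]$ inside $B_0$, and apply (4) to $A\leqslant B$ with the $F_i$ to obtain $\mathtt p\in\FK CB$ (with $C$ realized $\leqslant$-closed in $\mathbf{M}$) whose position-averages agree to within $\epsilon$. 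The decisive point is that for each $\Lambda\in\mathrm{supp}\,\mathtt p$ homogeneity lets me pick a \emph{single} $g_\Lambda\in G$ whose restriction to $B_0$ is $\Lambda$; then $g_\Lambda\cdot\Gamma_0=\Lambda\circ\eta_0$ and $g_\Lambda\cdot(h\cdot\Gamma_0)=\Lambda\circ\eta_h$ hold simultaneously, so $\mu:=\sum_\Lambda\mathtt p(\Lambda)\,\delta_{g_\Lambda}$ satisfies $\mu(\chi_i)=\sum_\Lambda\mathtt p(\Lambda)F_i(\Lambda\circ\eta_0)$ and $\mu(R_h\chi_i)=\sum_\Lambda\mathtt p(\Lambda)F_i(\Lambda\circ\eta_h)$, which differ by less than $\epsilon$. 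A weak-$*$ cluster point of these $\mu$ is the desired invariant mean. This is exactly where the full strength of (4) is forced: $H$ may be large (many functions $\chi_i$), the $F_i$ are genuinely $[0,1]$-valued, and $\epsilon$ must be arbitrary.

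The remaining two steps I regard as secondary. For $(1)\Rightarrow(2)$ I would argue contrapositively: if some $A\leqslant B$ admits, for every finite $\leqslant$-closed $C\supseteq B$, a bad $\{0,1\}$-coloring $f_C$ of $\PK CA$ (no $\mathtt p\in\FK CB$ with oscillation $\le\tfrac12$), then an ultrafilter limit of the $f_C$ along the directed family of finite $\leqslant$-closed subsets---using Fact~\ref{f5} for the finite-intersection property, exactly as in the proof of Theorem~\ref{rams}---produces a single $f:\PK{\mathbf{M}}A\to\{0,1\}$ admitting no finitely supported $\mathtt p\in\FK{\mathbf{M}}B$, contradicting (1). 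The amplification $(2)\Rightarrow(3)$ is the main technical obstacle: to boost the crude bound $\tfrac12$ on $\{0,1\}$-colorings to an arbitrary $\epsilon$ on $[0,1]$-colorings, I would discretize into finitely many level sets and then drive the oscillation down by iterating (2) along a tower $B\leqslant C_1\leqslant C_2\leqslant\cdots$, composing the resulting measures through the operation $\FK{\mathtt r}X$ defined at the start of this section, so that each application shrinks the oscillation by a fixed factor. Finally $(3)\Rightarrow(4)$ is a routine sequential refinement: handle $f_1,\dots,f_n$ one at a time, each time enlarging $C$ and composing the new measure into the previous one, using that further averaging of an already-flattened coloring keeps it flat. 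The hard part will be the quantitative convexity bookkeeping in $(2)\Rightarrow(3)$, where the $\FK{\cdot}{\cdot}$-measures (rather than a single monochromatic copy) are essential; by contrast, once the measure-to-group dictionary above is in place, the two amenability bridges are conceptually transparent.
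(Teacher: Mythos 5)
The paper's own ``proof'' of this theorem is a single sentence deferring to Theorem 6.1 of the Tatch Moore paper, so your reconstruction is necessarily more detailed than what the authors wrote; your overall architecture (the cycle of implications, the ultrafilter/compactness passage between the finitary and infinitary forms, and the two bridges to amenability via approximately invariant finitely supported measures and weak-$*$ cluster points) is faithful to that template, and your treatments of $(1)\Rightarrow(2)$, $(4)\Rightarrow(5)$ and $(5)\Rightarrow(1)$ are sound, including the correct use of homogeneity to choose a single $g_\Lambda$ realizing all positions $\eta_0,\eta_h$ simultaneously.

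There is, however, a genuine gap in your plan for $(2)\Rightarrow(3)$, and it sits exactly at the step you yourself flag as the hard one. Both of the devices you name fail as described. First, the level-set decomposition $f\approx\sum_j\frac{1}{N}\mathbf{1}_{f\geq j/N}$ only helps if a \emph{single} $\mathtt p$ flattens all $N$ level sets at once; condition $(2)$ hands you a possibly different $\mathtt p$ for each level set, and demanding one $\mathtt p$ for all of them is precisely the multi-coloring condition $(4)$ restricted to $\{0,1\}$-colorings --- the argument is circular. Second, the iteration along a tower $B\leqslant C_1\leqslant C_2\leqslant\cdots$ does contract multiplicatively, but only if at each stage you can apply the $\tfrac12$-of-range bound to the \emph{averaged} coloring $\bar f^{\mathtt r}(\gamma)=\sum_\Lambda\mathtt r(\Lambda)f(\Lambda\circ\gamma)$, which after one step is $[0,1]$-valued with range $\tfrac12$ rather than $\{0,1\}$-valued; condition $(2)$ as stated says nothing about such colorings, and rounding $\bar f^{\mathtt r}$ back to a $\{0,1\}$-coloring loses more than the factor $\tfrac12$ you gain (a threshold approximation costs an additive $\tfrac12$ in oscillation, so the bound does not improve). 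The composition identity $\mathrm{osc}(f,\mathtt r\cdot\mathtt p')=\mathrm{osc}(\bar f^{\mathtt r},\mathtt p')$ that you invoke is the right engine, but it only closes once one has established the $\tfrac12$-bound for $[0,1]$-valued colorings normalized by their range; that upgrade from $\{0,1\}$ to $[0,1]$ with the same constant is the actual content of the corresponding lemma in the cited paper and cannot be waved through by ``discretize and iterate.'' You should either import that lemma explicitly or supply an argument for the $[0,1]$-valued case before running your tower.
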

\begin{proof}
Proof of Theorem 6.1. in \cite{TatchMoore2011} can easily be modified
for this case.\end{proof}
\begin{rem}
We say a smooth have $\left(\mathcal{K},\leqslant\right)$ have the
\emph{convex $\leqslant$-Ramsey property} if condition (2) of the
theorem above holds for $\left(\mathcal{K},\leqslant\right)$. Then,
on the bases of Theorem $\ref{thm:cors}$, $\Aut\left(\mathbf{M}\right)$
has the convex $\leqslant$-Ramsey property\emph{ }with respect to
$\left(\mathcal{K},\leqslant\right)$ if and only if $\left(\mathcal{K},\leqslant\right)$
have the convex $\leqslant$-Ramsey property.
\end{rem}

\subsection{Main results}
\begin{defn}
Suppose $A,B\in\mathcal{K}$ and $A\leqslant B$. Let $f:\PK{\mathbf{M}}A\rightarrow\left\{ 0,1\right\} $
be a coloring function and $m:=\left|\PK BA\right|$. For $n>0$,
a $n\times m$-matrix $\mathrm Y$ is called a \emph{full-coloring matrix
of }$f$ if every row of the matrix $\mathrm Y$ corresponds to a coloring
matrix of a $\leqslant$-closed copy of $B$ in $\mathbf{M}$, and
conversely every coloring matrix of a $\leqslant$-closed copy of
$B$ corresponds to a row of $\mathrm Y$.
\end{defn}
\noun{${\bf Convention}$}. We always assume there is no repetition
of similar rows in $\mathrm Y$.
\begin{rem}
\label{rem} Let $\mathrm Y$ be the full-coloring matrix of a coloring function
$f$. Then	
\begin{enumerate}
\item It is an easy observation that when we interchange rows of $\mathrm Y$ we
obtain a full-coloring matrix of the same coloring function.
\item It follows from Lemma $\ref{lem:free-3}$, if the smooth class $\left(\mathcal{K},\leqslant\right)$
has the free-amalgamation property then, if $f$ is not constant,
$\mathrm Y$ should contain for every $w\in\left\{ 0,1\right\} $ and every
$1\leq i\leq m$ at least a row whose $i$-th entry is $w$, and moreover,
there are infinitely many distinct $\leqslant$-closed copies of $B$
such that the coloring matrix of them are exactly of that given row.
\item Suppose $\mathrm Y$ satisfies the convex $\leqslant$-Ramsey condition. This
means there are a finite set $\left\{ \Lambda_{i}:i\in I_{0}\right\} $
of $\leqslant$-closed embeddings of $B$ in $\mathbf{M}$, and a
finitely supported probability measure $\mathtt p\in\FK{\mathbf{M}}B$
such that $\mbox{supp}\left(\mathtt p\right)=\left\{ \Lambda_{i}:i\in I_{0}\right\} $
and $\left|f\left(\mathtt q_{1}\right)-f\left(\mathtt q_{2}\right)\right|\leq\frac{1}{2}$,
for every $\mathtt q_{1},\mathtt q_{2}\in\FK{\mathtt p}A$. Suppose $f'$ is
another coloring function that the set of rows of its full-coloring matrix contains the set of the coloring of the rows of $\left\{ \Lambda_{i}\left(B\right):i\in I_{0}\right\} $ under $f$. Then clearly the convex $\leqslant$-Ramsey condition also holds for $f'$.
\end{enumerate}
\end{rem}
So far, the matrices that we have considered are obtained from the
full-coloring matrices of coloring functions for some $A,B$ with
$A\leqslant B$ in a smooth class $\left(\mathcal{K},\leqslant\right)$.
The matrix presentation suggests the following definition, without
specifically referring to the class $\left(\mathcal{K},\leqslant\right)$
and any coloring function.
\begin{defn}
Let $\mathrm Y=\begin{bmatrix}\bar{\mathrm y}_{1}\\
\\
\vdots\\
\\
\bar{\mathrm y}_{n}
\end{bmatrix}$ be a $n\times m$-matrix whose entries are $0$ or $1$. Moreover,
assume there is no repetition of similar rows in $\mathrm Y$. We say $\mathrm Y$
satisfies \emph{the convex Ramsey condition} if there exist a $r\times m$-matrix
$\mathrm X=\begin{bmatrix}\bar{\mathrm x}_{1}\\
\\
\vdots\\
\\
\bar{\mathrm x}_{r}
\end{bmatrix}$ such that $\left\{ \bar{\mathrm x}_{1},\cdots,\bar{\mathrm x}_{r}\right\} \subseteq\left\{ \bar{\mathrm y}_{1},\cdots,\bar{\mathrm y}_{n}\right\} $
and a probability $1\times r$-matrix $\mathrm R$ such that for every Dirac-weight
$m\times1$-matrices $\mathrm W$
\[
\mathrm R\times \mathrm X\times \mathrm W\leq\frac{1}{2}.
\]
\end{defn}
\noun{Question. }It is an interesting question to fully understand
or classify, for fixed $n,m\in\mathbb{N}$, all $n\times m$-matrices
with the convex Ramsey condition. Then, the question of the convex
$\leqslant$-Ramsey property for a $\left(\mathcal{K},\leqslant\right)$
is reduced to investigate whether in a smooth class $\left(\mathcal{K},\leqslant\right)$
any of such matrices can be full-coloring matrix of a coloring function.
\begin{lem}
\label{claim-0-1} Suppose $\mathrm X$ is a $r\times m$-matrix with $r>0$
such that $\mathrm R$ contains a column whose entries are $1$, and a column
whose entries are $0$. Then, there is no probability $1\times r$-matrix
$\mathrm R$ such that
\[
\mathrm R\times \mathrm X\times \mathrm W\leq\frac{1}{2}
\]
 holds for all Dirac-weight matrices $\mathrm W$. \end{lem}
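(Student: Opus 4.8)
The plan is to exploit the two distinguished columns of $\mathrm X$ directly, producing a single Dirac-weight matrix that witnesses the failure of the inequality simultaneously for every candidate probability matrix $\mathrm R$. Write $\mathrm R=\begin{pmatrix} r_{1} & \cdots & r_{r}\end{pmatrix}$ for an arbitrary probability $1\times r$-matrix, so that $r_{j}\geq 0$ and $\sum_{j=1}^{r} r_{j}=1$. Let $a$ be the index of a column of $\mathrm X$ all of whose entries equal $1$, and let $b$ be the index of a column all of whose entries equal $0$; both exist by hypothesis, and since $r>0$ these columns are visibly distinct, so $a\neq b$.

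First I would compute the product $\mathrm R\times \mathrm X$, which is the $1\times m$ row vector whose $i$-th coordinate equals $\sum_{j=1}^{r} r_{j}\, X_{j,i}$. The key observation is that this row vector is forced on the two special coordinates \emph{independently} of $\mathrm R$: its $a$-th coordinate is $\sum_{j=1}^{r} r_{j}\cdot 1=1$ by the normalization of $\mathrm R$, while its $b$-th coordinate is $\sum_{j=1}^{r} r_{j}\cdot 0=0$.

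Next I would take the Dirac-weight $m\times 1$-matrix $\mathrm W$ whose $a$-th entry is $1$, whose $b$-th entry is $-1$, and whose remaining entries are $0$; this is an admissible Dirac-weight matrix precisely because $a\neq b$. Multiplying $\mathrm R\times\mathrm X$ on the right by $\mathrm W$ selects the difference of the $a$-th and $b$-th coordinates, so that
\[
\mathrm R\times \mathrm X\times \mathrm W = 1-0 = 1 > \frac{1}{2}.
\]
Since $\mathrm R$ was an arbitrary probability matrix, no such matrix can satisfy $\mathrm R\times \mathrm X\times \mathrm W\leq\frac{1}{2}$ for every Dirac-weight matrix $\mathrm W$, which is exactly the assertion of the lemma.

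There is no genuine obstacle here; the whole content of the argument is the normalization $\sum_{j} r_{j}=1$, which pins the all-ones column to the value $1$ irrespective of the particular weights $r_{j}$, while the all-zeros column contributes $0$ no matter what. The only point deserving a line of care is to confirm that the witnessing $\mathrm W$ is a legitimate Dirac-weight matrix, i.e.\ that the all-ones and all-zeros columns occupy different indices, which is immediate from $r>0$.
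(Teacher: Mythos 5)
Your proof is correct and is essentially identical to the paper's own argument: both select the Dirac-weight matrix with $+1$ at the all-ones column and $-1$ at the all-zeros column, and use the normalization $\sum_j r_j = 1$ to get $\mathrm R\times\mathrm X\times\mathrm W = 1 > \frac{1}{2}$. Your added remark that the two columns have distinct indices (so the witnessing $\mathrm W$ is a legitimate Dirac-weight matrix) is a small point of care the paper leaves implicit.
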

\begin{proof}
Suppose $\mathrm R$ is any probability matrix. Let $\mathrm Q:=\mathrm R\times \mathrm X$. Suppose
$\mathrm X_{i}^{c}$,$\mathrm X_{j}^{c}$ are two columns of $\mathrm X$ whose all its entries
are $1$, and $0$; respectively. Let $\mathrm W$ be a Dirac-weight matrix
such that $w_{i,1}:=1$, $w_{j,1}:=-1$ and $w_{k,1}=0$ for all $k\in\left\{ 1,\cdots,m\right\} \backslash\left\{ i,j\right\} $.
Then, $q_{1,i}=\mathrm R\times \mathrm X_{i}^{c}=\sum_{1\leq i\leq r}r_{i}=1$ and
$q_{1,j}=\mathrm R\times \mathrm X_{j}^{c}=0$. Now $\mathrm Q\times \mathrm W=q_{1,i}\cdot w_{i,1}+q_{1,j}\cdot w_{j,1}=1>\frac{1}{2}$
which is a failure for above condition. \end{proof}
\begin{lem}
\label{lem:matrixRa} For the following matrix
\textup{
\[
\mathrm  Y:=\begin{pmatrix}1 & 1 & 1 & 1 & 0 & 0\\
1 & 1 & 1 & 0 & 1 & 0\\
1 & 1 & 1 & 0 & 0 & 1\\
0 & 1 & 1 & 0 & 0 & 0\\
1 & 0 & 1 & 0 & 0 & 0\\
1 & 1 & 0 & 0 & 0 & 0
\end{pmatrix}_{6\times6},
\]
}
the convex Ramsey condition fails.\end{lem}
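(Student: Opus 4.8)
The plan is to reinterpret the convex Ramsey condition for $\mathrm Y$ as a purely geometric statement about the \emph{spread} of a convex combination of its rows, and then to show that this spread is always bounded below by $\tfrac23$, which exceeds $\tfrac12$.

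First I would make the reduction precise. For any submatrix $\mathrm X$ whose rows are among $\bar{\mathrm y}_1,\dots,\bar{\mathrm y}_6$ and any probability row-vector $\mathrm R$, the product $\mathrm Q:=\mathrm R\times \mathrm X$ is a $1\times 6$ row-vector that is a convex combination of the rows of $\mathrm Y$ (with the unused rows receiving coefficient $0$); in particular $\mathrm Q=(q_1,\dots,q_6)$ lies in the convex hull of the six rows. If $\mathrm W$ is the Dirac-weight matrix with $+1$ in position $i$ and $-1$ in position $j$, then $\mathrm Q\times \mathrm W=q_i-q_j$. Hence the requirement $\mathrm R\times \mathrm X\times \mathrm W\le\tfrac12$ for \emph{all} Dirac-weight matrices $\mathrm W$ is exactly the requirement $\max_i q_i-\min_j q_j\le\tfrac12$, i.e. that the entries of $\mathrm Q$ have spread at most $\tfrac12$. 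Consequently, to prove that the convex Ramsey condition fails for $\mathrm Y$ it suffices to show that \emph{every} convex combination of its rows has spread strictly greater than $\tfrac12$.

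Next I would carry out the spread computation. Writing $\mathrm Q=\sum_{k=1}^6 r_k\,\bar{\mathrm y}_k$ with $r_k\ge0$ and $\sum_k r_k=1$, the column pattern of $\mathrm Y$ gives the entries explicitly:
\[
q_1=1-r_4,\quad q_2=1-r_5,\quad q_3=1-r_6,\qquad q_4=r_1,\quad q_5=r_2,\quad q_6=r_3.
\]
Setting $s:=r_1+r_2+r_3$ this yields $q_1+q_2+q_3=2+s$ and $q_4+q_5+q_6=s$. By averaging, some entry among $q_1,q_2,q_3$ is at least $\tfrac{2+s}{3}$ while some entry among $q_4,q_5,q_6$ is at most $\tfrac{s}{3}$, so their difference is at least $\tfrac23>\tfrac12$. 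Choosing the Dirac-weight matrix $\mathrm W$ placing $+1$ at the former index and $-1$ at the latter — these indices being distinct, one in $\{1,2,3\}$ and the other in $\{4,5,6\}$ — gives $\mathrm R\times \mathrm X\times \mathrm W\ge\tfrac23>\tfrac12$, defeating the candidate $(\mathrm X,\mathrm R)$.

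I do not expect a genuine obstacle: the only care needed is to check that the reduction to ``spread of a convex combination'' is faithful uniformly over all choices of the subset $\mathrm X$ and the probability vector $\mathrm R$ (handled by viewing $\mathrm Q$ as a convex combination of all six rows), and that the two extremal indices always lie in different column-blocks so a legitimate Dirac-weight matrix can be formed. The structural reason the constant $\tfrac23$ appears is that the first three and last three columns of $\mathrm Y$ are dual: increasing the weight on rows $4,5,6$ to push the top three entries down simultaneously forces the bottom three entries up by the same total, so the two blocks can never be squeezed to within $\tfrac12$ of one another.
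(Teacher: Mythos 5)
Your proof is correct, and it takes a cleaner, more uniform route than the paper's. The paper splits into cases on the number $r$ of rows of $\mathrm X$: for $r\le 2$ it invokes Lemma \ref{claim-0-1} (a constant-$1$ column together with a constant-$0$ column), for $r=3$ it checks the two surviving submatrices by hand, and for $r>3$ it sums nine inequalities coming from the Dirac weights pairing the first three columns against the last three, using disjointness of the index sets $I_{0}^{l}$ and $I_{1}^{l'}$ to derive $\sum_{1\le i\le 3}A_i+B+C+D\ge\frac{3}{2}$ against the bound $1$. Your argument absorbs all of this into a single computation: since any admissible $\mathrm Q=\mathrm R\times\mathrm X$ is a convex combination $\sum_k r_k\bar{\mathrm y}_k$ of the six rows (aggregating coefficients over repeated rows and padding unused rows with $0$), the column structure gives $q_1+q_2+q_3=2+s$ and $q_4+q_5+q_6=s$ with $s=r_1+r_2+r_3$, so averaging produces indices $i\in\{1,2,3\}$ and $j\in\{4,5,6\}$ with $q_i-q_j\ge\frac{2}{3}>\frac{1}{2}$, and the corresponding Dirac weight is legitimate because $i\ne j$. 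This is essentially the paper's $r>3$ averaging argument stripped of the case analysis, and it buys two things: uniformity in $r$ (no separate treatment of small submatrices) and the explicit sharper constant $\frac{2}{3}$ in place of the bare failure of $\frac{1}{2}$; it also isolates the structural reason for the failure, namely the duality between the two column blocks. The one point worth stating explicitly in a write-up is the observation you already flag, that passing from $\mathrm R\times\mathrm X$ to a convex combination of all six rows is faithful, which is immediate.
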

\begin{proof}
We need to prove for any $r>0$ and $r\times6$-matrix $\mathrm X$, whose
rows are chosen from rows of $\mathrm Y$, there is no probability $1\times r$-matrix
$\mathrm R$ such that $\mathrm R\times \mathrm X\times \mathrm W\leq\frac{1}{2}$, for all Dirac-weight
matrices $\mathrm W$. \\
From Lemma $\ref{claim-0-1}$, it follows that for $r=1,2$ there
is no probability $1\times r$-matrix $\mathrm R$ such that the convex Ramsey
condition holds for $\mathrm X$, since a column of with constant $1$ and
a column of constant $0$ appear in $\mathrm X$. For $r=3$, the only cases
that remain to be checked, again using Lemma $\ref{claim-0-1}$, are
\[
\begin{pmatrix}1 & 1 & 1 & 1 & 0 & 0\\
1 & 1 & 1 & 0 & 1 & 0\\
1 & 1 & 1 & 0 & 0 & 1
\end{pmatrix}\hspace{1em}\mbox{and}\hspace{1em}\begin{pmatrix}0 & 1 & 1 & 0 & 0 & 0\\
1 & 0 & 1 & 0 & 0 & 0\\
1 & 1 & 0 & 0 & 0 & 0
\end{pmatrix}.
\]
Let $\mathrm R:=\begin{pmatrix}r_{1} & r_{2} & r_{3}\end{pmatrix}_{1\times3}$
be any probability matrix i.e. $\sum_{1\leq i\leq3}r_{i}=1$. Then
$\mathrm Q:=\mathrm R\times \mathrm X=\begin{pmatrix}1 & 1 & 1 & r_{1} & r_{2} & r_{3}\end{pmatrix}_{1\times6}\hspace{1em}\mbox{and}\hspace{1em}\begin{pmatrix}r_{2}+r_{3} & r_{1}+r_{3} & r_{1}+r_{2} & 0 & 0 & 0\end{pmatrix}_{1\times6}$;
respectively. Now consider
\[
\mathrm W_{1}:=\begin{pmatrix}1\\
0\\
0\\
-1\\
0\\
0
\end{pmatrix},\hspace{1em}\hspace{1em}\mathrm W_{2}:=\begin{pmatrix}0\\
1\\
0\\
0\\
-1\\
0
\end{pmatrix}\hspace{1em}\mbox{and}\hspace{1em}\mathrm W_{3}:=\begin{pmatrix}0\\
0\\
1\\
0\\
0\\
-1
\end{pmatrix};
\]
to be Dirac-weight matrices. It is easy to see that $\mathrm Q\times \mathrm W_{1}=1-r_{1}$,
$\mathrm Q\times \mathrm W_{2}=1-r_{2}$, and $\mathrm Q\times \mathrm W_{3}=1-r_{3}$. The convex
Ramsey condition requires that $1-r_{1}\leq\frac{1}{2}$, $1-r_{2}\leq\frac{1}{2}$
and $1-r_{3}\leq\frac{1}{2}$. Hence $r_{i}\geq\frac{1}{2}$ for all
$i\in\left\{ 1,2,3\right\} $, which is contradictory with the fact
that $r_{1}+r_{2}+r_{3}\leq1$.\\
Now we assume $r>3$. Note that for any $r\times m$-matrix $\mathrm X$,
we have the following easy properties which will help us for the subsequent
calculations:
\begin{enumerate}
\item The number of entries $1$ in the first three columns is at most $3\cdot r$
and at least $2\cdot r$;
\item The number of entries $0$ in the first three columns is at most $r$.
Moreover, entry $0$ might not occurs in these columns;
\item Dually, the same statement as (2) also holds for the number of entries
$1$ in the last three columns.
\item In the first three columns never two entries $0$ appear in the same
row;
\item Dually, in the last three columns never two entries $1$ appear in
the same row.
\end{enumerate}
Suppose now $\mathrm R$ is a probability $1\times r$-matrix. Then, the
convex Ramsey condition requires the following matrix inequality to
be true:
\[
\mathrm Q\times\begin{pmatrix}1 & 1 & 1 & 0 & 0 & 0 & 0 & 0 & 0\\
0 & 0 & 0 & 1 & 1 & 1 & 0 & 0 & 0\\
0 & 0 & 0 & 0 & 0 & 0 & 1 & 1 & 1\\
-1 & 0 & 0 & -1 & 0 & 0 & -1 & 0 & 0\\
0 & -1 & 0 & 0 & -1 & 0 & 0 & -1 & 0\\
0 & 0 & -1 & 0 & 0 & -1 & 0 & 0 & -1
\end{pmatrix}_{6\times9}\leq\begin{pmatrix}\frac{1}{2} &  & \cdots &  & \frac{1}{2}\end{pmatrix}_{1\times9},
\]
where $\mathrm Q=\mathrm R\times \mathrm X$. Define the following set of indices: For $1\leq l\leq3$, let $I_{0}^{l}:=\left\{ i:r_{i,l}=0\right\} $
and for $3<l'\leq6$, let $I_{1}^{l'}:=\left\{ i:r_{i,l'}=1\right\} .$
By the above observation all $I_{0}^{l}$'s and $I_{1}^{l}$'s are
disjoint sets. This leads to the following inequalities:
\[
\begin{cases}
A_{1}+B & \geq\frac{1}{2}\\
A_{2}+B & \geq\frac{1}{2}\\
A_{3}+B & \geq\frac{1}{2}\\
A_{1}+C & \geq\frac{1}{2}\\
A_{2}+C & \geq\frac{1}{2}\\
A_{3}+C & \geq\frac{1}{2}\\
A_{1}+D & \geq\frac{1}{2}\\
A_{2}+D & \geq\frac{1}{2}\\
A_{3}+D & \geq\frac{1}{2}
\end{cases},
\]
 where $B:=\sum_{i\in I_{0}^{1}}r_{i}$, $C:=\sum_{i\in I_{0}^{2}}r_{i}$,
$D:=\sum_{i\in I_{0}^{3}}r_{i}$ and $A_{i}:=\sum_{j\in I_{1}^{i}}r_{j}$
for $1\leq i\leq3$. Then, it follows that $3\cdot\left(\sum_{1\leq i\leq3}A_{i}+B+C+D\right)\geq9\cdot\frac{1}{2}$
and subsequently $\sum_{1\leq i\leq3}A_{i}+B+C+D\geq\frac{3}{2}$.
However, since we have assumed $\mathrm R$ is probability matrix and the
set of indices of the above summations are disjoint, it follows $\sum_{1\leq i\leq3}A_{i}+B+C+D\leq1$;
a contradiction. Hence, the convex Ramsey condition fails for $\mathrm X$.\end{proof}
\begin{defn}
Let $\left(\mathcal{K},\leqslant\right)$ be a smooth class and $\mathbf M$
the $\left(\mathcal{K},\leqslant\right)$-generic model. Let $A,B\in\mathcal{K}$
with $A\leqslant B$ and assume $\Lambda_{i},\Lambda_{j}\in\PK MB$.
\begin{enumerate}
\item We say $\Lambda_{i}$ and $\Lambda_{j}$ are in the same \emph{connected
competent with respect to $A$} if there are $n\geq1$ and $\Lambda_{i_{1}},\cdots,\Lambda_{i_{n}}\in\PK {\mathbf M}B$
such that $\Lambda_{i_{1}}=\Lambda_{i}$, $\Lambda_{i_{n}}=\Lambda_{j}$
and $\Lambda_{i_{j}}\left(B\right)\cap\Lambda_{i_{j+1}}\left(B\right)$
contains at least one $\leqslant$-closed copy of $A$ for $1\leq j\leq n$. We say $\Lambda_{i}$ and $\Lambda_{j}$ have \emph{distance} $n-1$ if $n$ is the minimum number that satisfies the condition.
\item Let $m\geq2$. We say $\Lambda_{i}$ and $\Lambda_{j}$ lay on an
\emph{$m$-cycle of embeddings over $A$} if there exits distinct
$\Lambda_{i_{1}},\cdots,\Lambda_{i_{m}}\in\PK {\mathbf M}B$ where $\Lambda_{i_{1}}=\Lambda_{i}$,
$\Lambda_{i_{m}}=\Lambda_{j}$ such that:
\begin{enumerate}
  \item $\Lambda_{i_{1}}\left(B\right)\cap\Lambda_{i_{m}}\left(B\right)$
and $\Lambda_{i_{j}}\left(B\right)\cap\Lambda_{i_{j+1}}\left(B\right)$
contain at least one $\leqslant$-closed copy of $A$, for $1\leq j\leq m$;
  \item  No pairs of elements of the set $\left\{ \Lambda_{i_{1}}\left(B\right)\cap\Lambda_{i_{m}}\left(B\right), \Lambda_{i_{j}}\left(B\right)\cap\Lambda_{i_{j+1}}\left(B\right): 1\leq j\leq m \right\}$ contain a common $\leqslant$-closed copy  of $A$.
\end{enumerate}
\item We say $\left(A;B\right)$ is a \emph{tree-pair} if the following
conditions hold:
\begin{enumerate}
\item $\Lambda\left(B\right)\cap\Lambda^{\prime}\left(B\right)$ contains
at most one $\leqslant$-closed copy of $A$ for any two distinct
$\Lambda,\Lambda^{\prime}\in\PK {\mathbf M}B$;
\item Any two distinct $\Lambda,\Lambda'\in\PK {\mathbf M}B$ never lay in an $l$-cycle
for $l\geq2$.\end{enumerate}
\end{enumerate}
\end{defn}
\begin{thm}
\label{thm:main1} Suppose $\left(\mathcal{K},\leqslant\right)$ is
a smooth class with AP and HP, and $\mathbf{M}$ the $\left(\mathcal{K},\leqslant\right)$-generic
structure. Suppose there are $A,B\in\mathcal{K}$ and $A\leqslant B$
such that $\left(A;B\right)$ is a tree-pair with $\left|\PK BA\right|=6$.
Then, $\Aut\left(\mathbf{M}\right)$ does not have the convex $\leqslant$-Ramsey
property with respect to $\left(\mathcal{K},\leqslant\right)$.\end{thm}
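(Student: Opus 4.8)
The plan is to exhibit a single $2$-coloring $f:\PK{\mathbf M}A\to\{0,1\}$ that fails the convex $\leqslant$-Ramsey condition; by the matrix criterion derived above this is equivalent to saying that the full-coloring matrix of $f$ fails the convex Ramsey condition, and producing one such $f$ denies $\Aut(\mathbf M)$ the convex $\leqslant$-Ramsey property with respect to $\left(\mathcal K,\leqslant\right)$. Concretely, I would arrange that the coloring matrix of \emph{every} $\leqslant$-closed copy of $B$ in $\mathbf M$ is one of the six rows of the matrix $\mathrm Y$ of Lemma \ref{lem:matrixRa}, so that the full-coloring matrix of $f$ has all of its rows among those of $\mathrm Y$. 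Since the convex Ramsey condition already quantifies over submatrices, Lemma \ref{lem:matrixRa} shows it fails for every submatrix of $\mathrm Y$; as any submatrix of the full-coloring matrix of $f$ is a submatrix of $\mathrm Y$, the full-coloring matrix of $f$ fails the convex Ramsey condition. Moreover no row of $\mathrm Y$ is monochromatic, so Corollary \ref{cor:monoc} never rescues $f$.

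The combinatorial core is the construction of $f$, and this is where the tree-pair hypothesis is used. Consider the bipartite incidence graph $\mathcal G$ whose vertices are the $\leqslant$-closed copies $\Lambda(B)$ with $\Lambda\in\PK{\mathbf M}B$ together with the $\leqslant$-closed copies of $A$ lying inside some such $\Lambda(B)$, with an edge joining each copy of $B$ to each of the six copies of $A$ it contains. I would first check that $\mathcal G$ is a forest. A cycle in $\mathcal G$ has the bipartite form $\Lambda_1(B)-\alpha_1-\Lambda_2(B)-\cdots-\Lambda_k(B)-\alpha_k-\Lambda_1(B)$ with the $\Lambda_i$ distinct and the $\alpha_i$ distinct $\leqslant$-closed copies of $A$, where $\alpha_i$ lies in $\Lambda_i(B)\cap\Lambda_{i+1}(B)$ (indices modulo $k$). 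A length-$4$ cycle ($k=2$) would force two distinct copies of $A$ inside $\Lambda_1(B)\cap\Lambda_2(B)$, contradicting tree-pair condition (3)(a); for $k\geq 2$, using that by (3)(a) each intersection $\Lambda_i(B)\cap\Lambda_{i+1}(B)$ contains exactly the single copy $\alpha_i$, the cycle is precisely a $k$-cycle of embeddings over $A$ and so is forbidden by (3)(b). Hence $\mathcal G$ has no cycles. (This correctly permits the configuration of several copies of $B$ sharing one common copy of $A$, which forms a star, not a cycle.)

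With $\mathcal G$ a forest I would color the copies of $A$ by a root-to-leaf sweep: root each tree of $\mathcal G$ and process its vertices in breadth-first order. The decisive structural point is that whenever a $B$-vertex $\Lambda(B)$ is first reached, at most one of its six incident copies of $A$ (the one linking it to its parent) has already received a color, the remaining incident copies being children that are still uncolored. The key numerical feature of $\mathrm Y$ is that each of its six columns contains both a $0$ and a $1$; equivalently, for every position $i\in\{1,\dots,6\}$ and every $w\in\{0,1\}$ there is a row of $\mathrm Y$ with entry $w$ in position $i$. Thus, whatever the color of the parent copy and whichever of the six positions it occupies inside $\Lambda(B)$, I can choose a row of $\mathrm Y$ compatible with that single constraint and use it to color the remaining incident copies; copies of $A$ lying in no copy of $B$ are colored arbitrarily. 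Because $\mathcal G$ is acyclic, each copy of $A$ is colored exactly once and no later conflict arises, so the procedure defines $f$ on all of $\PK{\mathbf M}A$ and, by construction, the coloring matrix of every $\leqslant$-closed copy of $B$ is a row of $\mathrm Y$.

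I expect the main obstacle to be the verification that the tree-pair conditions rule out \emph{all} cycles of $\mathcal G$: one must carefully separate genuine cycles from stars arising when many copies of $B$ meet in a common copy of $A$, and confirm that condition (3)(a) collapses each pairwise overlap to a single well-defined incidence, so that every graph-theoretic cycle does yield a cycle of embeddings excluded by (3)(b). A secondary technical point is the tacit requirement that the $m=6$ embeddings $\eta_1,\dots,\eta_6$ present six \emph{distinct} copies of $A$ inside $B$, which is what makes a full row of $\mathrm Y$ attainable as a coloring matrix; granting this, the sweep above produces the desired $f$, and combining it with Lemma \ref{lem:matrixRa} and the matrix criterion for the convex $\leqslant$-Ramsey condition completes the proof.
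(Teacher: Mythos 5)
Your proposal is correct and follows essentially the same route as the paper: both reduce the problem to realizing (a subset of) the rows of the matrix $\mathrm Y$ of Lemma \ref{lem:matrixRa} as the coloring matrices of all $\leqslant$-closed copies of $B$, and both exploit the tree-pair hypothesis to propagate a consistent choice of row outward from a fixed copy of $B$ (your bipartite incidence forest with a breadth-first sweep is the same mechanism as the paper's induction on distance within connected components). Your observation that one only needs every coloring matrix to lie among the rows of $\mathrm Y$ (rather than all six rows to occur) is a mild simplification but not a different argument.
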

\begin{proof}
Our strategy is to present a coloring function $f:\PK{\mathbf{M}}A\rightarrow\left\{ 0,1\right\} $
such that the full-coloring matrix of $f$ for copies of $B$ is
\[
\mathrm Y:=\begin{pmatrix}1 & 1 & 1 & 1 & 0 & 0\\
1 & 1 & 1 & 0 & 1 & 0\\
1 & 1 & 1 & 0 & 0 & 1\\
0 & 1 & 1 & 0 & 0 & 0\\
1 & 0 & 1 & 0 & 0 & 0\\
1 & 1 & 0 & 0 & 0 & 0
\end{pmatrix}_{6\times6}.
\]
Then, Lemma $\text{\ref{lem:matrixRa}}$ implies that $\Aut\left(\mathbf{M}\right)$
does not have the convex $\leqslant$-Ramsey property with respect
to $\left(\mathcal{K},\leqslant\right)$. \\
Take the connected components of elements of $\PK{\mathbf{M}}B$
with respect to $A$. It is enough
to present a suitable coloring function for each connected component, denoted by
$\left[\Lambda\right]$ where $\Lambda \in \PK{\mathbf{M}}B$. Since $\left(A;B\right)$
is a tree-pair, each connected component $\left[\Lambda\right]$ does not contain a cycle.  We can define a coloring function
$f$ on each $\left[\Lambda\right]$ by induction
on the distance of elements of $\left[\Lambda\right]$ from a fixed element in the class. \\
Choose a row $\bar{\mathrm y}$ in $\mathrm Y$, and let $f_{0}$ be a coloring of
$\leqslant$-closed copies of $A$ in $\Lambda\left(B\right)$ that
$f_{0}\left(\Lambda\right)=\bar{\mathrm y}$. For each $i\geq1$ and embeddings
of distance $i$ from $\Lambda$, we inductively extend $f_{i}$ to $f_{i+1}$ and then define $f:=\bigcup_{i<\omega}f_{i}$.
Here, we only explain how to define $f_{1}$ on elements of $\PK{\mathbf{M}}B$
which have distance one to $\Lambda$. Let $\Lambda^{\prime}\in\left[\Lambda\right]$ be such an element. Then $\Lambda'\left(B\right)\cap\Lambda\left(B\right)=\Lambda'\circ \eta_{i}\left(A\right)$
for \emph{exactly} one $i\in\left\{ 1,\cdots,6\right\}$ (follows from the definition of a tree-piar). \\
Choose a row $\bar{\mathrm y}$
of $\mathrm Y$ such that $y_{1, i}=f_{0}\left(\Lambda'\circ \eta_{i}\left(A\right)\right)$.
Let $f_{1}$ be such that $f_{1}\left(\Lambda'\right)=\bar{\mathrm y}$. Note that any two embeddings of distance one from $\Lambda$ intersect at most in one copy of $A$ in $\Lambda(B)$. This follows from the fact that there are no $2$-cycles in a connected competent of a tree-pair. Hence, one can assign a coloring, consistently, to each embedding of distance one and by Remark $\ref{rem:28}$, each row of $\mathrm Y$, will eventually appear in $f_{1}$. A similar argument works for embeddings of distance $i\geq 1$ inductively, as there are no $l$-cycles in a connected component in a tree-pair for $2\leq l\leq 2i$. Therefore, we have $f_i$'s defined and finally the desired $f$ is obtained for each connected component.
\end{proof}

\section{Amenability of automorphism groups of Ab-initio generic structures}

\label{sec:4}

Using the correspondence in Subsection $\ref{sub:3.2}$ and Theorem
$\ref{thm:cors}$, we show that the automorphism groups of Hrushovski-Fra\"iss\'e
structures which are obtained from pre-dimensions with rational coefficients
are not amenable groups. Moreover, for Hrushovski structures in these
cases, we strengthen Theorem $\ref{thm:extamngraph}$, by showing that
the automorphism groups of their ordered generic structures are not
extremely amenable (Theorem $\ref{thm:extamngraph}$ only solves the binary case).

\subsection{Pre-dimension functions with rational coefficients.}

\subsubsection{Setting}

Suppose $\left(\mathcal{K}_{\alpha}^{+},\leqslant\right)$ is an ab-initio
class of finite relational $\mathfrak{L}$-structures, where $\mathfrak{L}=\left\{ \mathfrak{R}\right\} $,
that has been defined in Subsection $\ref{sub:ab}$. In this section,
we assume $\alpha$ is a rational number. For simplicity, we denote
$\delta$,$\mathcal{K}_{0}$,$\leqslant$ and $\mathbf{M}_{0}$ for
$\delta_{\alpha}$,$\mathcal{K}_{\alpha}^{+}$,$\leqslant_{\alpha}$
and $\mathbf{M}_{\alpha}$; respectively. We fix $\alpha=2$; the
arguments can be easily modified for any rational $\alpha\geq1$. Recall that we denote $AB$ for the $\mathfrak L$-substructure $A\cup B$ in $C$ and $\delta(A/B)=\delta (AB)-\delta (A)$ where $A,B,C\in \mathcal K_0$ and $A,B\subseteq C$.
\begin{fact}
(See \cite{Wag1}) For each finite $A,B,C\in\mathbf{M}_{0}$, the following holds
\[
\delta\left(ABC\right)=\delta\left(AB/C\right)+\delta\left(C\right)=\delta\left(A/BC\right)+\delta\left(B/C\right)+\delta\left(C\right).
\]
\end{fact}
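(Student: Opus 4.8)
The plan is to observe that both equalities are purely formal consequences of the definition of the relative pre-dimension and amount to nothing more than a telescoping identity; no edge-counting with the explicit formula $\delta(X)=\alpha\cdot|V(X)|-|E(X)|$ is required. Throughout I use the convention, consistent with Subsection \ref{sub:ab} in the nested case, that for finite $X,Y\subseteq\mathbf{M}_0$ the relative pre-dimension is $\delta(X/Y):=\delta(XY)-\delta(Y)$, where $XY$ denotes the induced $\mathfrak{L}$-substructure on $X\cup Y$.

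For the first equality I would simply specialise the definition to $X=AB$ and $Y=C$. Since $(AB)C$ is by definition the substructure on $(A\cup B)\cup C=A\cup B\cup C$, which we abbreviate $ABC$, this reads $\delta(AB/C)=\delta(ABC)-\delta(C)$, and rearranging gives $\delta(ABC)=\delta(AB/C)+\delta(C)$.

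For the second equality I would apply the same definition twice and let the middle term cancel. Taking $X=A$, $Y=BC$ gives $\delta(A/BC)=\delta(ABC)-\delta(BC)$, and taking $X=B$, $Y=C$ gives $\delta(B/C)=\delta(BC)-\delta(C)$. Adding these to $\delta(C)$, the term $\delta(BC)$ telescopes away and one is left with $\delta(ABC)$, which is exactly the claimed identity $\delta(ABC)=\delta(A/BC)+\delta(B/C)+\delta(C)$.

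There is essentially no obstacle of substance here; the only point requiring care is the associativity of the concatenation notation. One must check that $(AB)C$, $A(BC)$ and $ABC$ all denote the same object, namely the induced substructure on the single vertex set $A\cup B\cup C$, so that $\delta$ evaluated on any of them coincides. Because $\delta(X)$ depends only on the induced subgraph on the underlying set $X$ inside $\mathbf{M}_0$, and set-union is associative, this is automatic, and the two equalities follow with no further work.
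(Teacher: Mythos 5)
Your proof is correct, and it is essentially the only proof this statement admits: both equalities are formal telescoping consequences of the convention $\delta(X/Y)=\delta(XY)-\delta(Y)$, together with the observation that $(AB)C$, $A(BC)$ and $ABC$ all denote the induced substructure on $A\cup B\cup C$. The paper itself gives no argument for this Fact (it is stated with a bare citation to Wagner), so there is nothing to diverge from; your write-up simply supplies the routine verification the citation stands in for. One point worth flagging: the paper's own recap of the convention just before this Fact reads ``$\delta(A/B)=\delta(AB)-\delta(A)$,'' which as literally written would make the Fact false (one would get $\delta(AB/C)+\delta(C)=\delta(ABC)-\delta(AB)+\delta(C)$, not $\delta(ABC)$). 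The convention you adopt, $\delta(X/Y)=\delta(XY)-\delta(Y)$, is the one consistent with Definition of $\delta_\alpha(C/A)$ in Subsection 2.1.2 and with the statement being proved, so you have implicitly corrected a typo in the source; it would be worth saying so explicitly rather than silently.
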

Recall the definition of $0$-minimally
algebraic sets from Subsection $\ref{sub:ab}$ (i.e. Definition \ref{zero-min}). The following remark is used later in the proof of Theorem $\ref{thm:ab-}$.
\begin{rem}\label{rem-new-44}
 Suppose $A\in \mathcal K_0$, and let $m\geq \mbox{max}\lbrace3, |A|\rbrace$ be an integer.  Consider $C_m=\left\lbrace c_1,\cdots,c_m\right\rbrace$ such that $\bigwedge_{1\leq i<m}\mathfrak {R}^{C_m}\left(c_i,c_{i+1} \right)\wedge \mathfrak R^{C_m}(c_1,c_m)$; i.e. $C_m$ is a single $m$-cycle. Note that $C_m\in \mathcal K_0$.  Suppose $(a_i:1\leq i\leq n)$ is an enumeration of elements of $A$. Now let $D_m:=A\dot\cup C_m$ such that $\bigwedge_{1\leq i\leq n} \mathfrak R^{D_m} (c_i,a_i)\wedge \bigwedge_{n<i\leq m} \mathfrak R^{D_m}(c_i,a_n)$. It is easy to check that $\delta (D_m/A)=0$, $D_m\in \mathcal K_0$ and $C_m$ is $0$-minimally algebraic over $A$. Hence, there are infinitely many non-isomorphic $0$-minimally algebraic sets over each $A$ in $\mathcal K_0$.
\end{rem}

\begin{thm}
\label{thm:ab-} There are $A,B$ in $\mathcal{K}_{0}$ with $A\leqslant B$
and $\left|\PK BA\right|=6$ such that $\left(A;B\right)$ is a tree-pair.
Therefore, $\Aut\left(\mathbf{M}_{0}\right)$ does not have the convex
$\leqslant$-Ramsey property with respect to $\left(\mathcal{K}_{0},\leqslant\right)$.\end{thm}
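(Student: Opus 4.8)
The plan is to exhibit concrete $A,B\in\mathcal K_{0}$ with $A\leqslant B$, $\left|\PK BA\right|=6$, and $(A;B)$ a tree-pair, and then to quote Theorem \ref{thm:main1}. The ``exactly one $i$'' step in the proof of Theorem \ref{thm:main1} requires the six members of $\PK BA$ to have six \emph{distinct} images, so I will take $A$ to be rigid; the cleanest choice is $A$ a single vertex, in which case $\PK BA$ is the set of $\leqslant$-closed vertices of $B$ and I must arrange that there are exactly six of them. Note $\delta(A)=2>0$. The two structural demands that will yield the two clauses of the tree-pair are: the six copies must pairwise $\leqslant$-generate $B$ (this gives clause (a)), and $B$ must be a pre-dimension--minimal extension in the sense $\delta(B/A)=0$ (this gives clause (b)).

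First I would build $B$. Starting from the six intended vertices I attach $0$-minimally algebraic ``cycle'' gadgets as in Remark \ref{rem-new-44}, tuned so that $\delta(B/A)=0$, so that each of the six vertices becomes $\leqslant$-closed, and so that no further $\leqslant$-closed copy of $A$ is created. Here the abundance of pairwise non-isomorphic $0$-minimally algebraic sets furnished by Remark \ref{rem-new-44} is exactly what permits an \emph{asymmetric} decoration, forcing $B$ to be rigid; rigidity of $B$ is itself necessary, since two distinct $\leqslant$-embeddings of $B$ sharing a common image would intersect in all six copies of $A$ and violate clause (a). I would then check by a direct $\delta$-computation that $B\in\mathcal K_{0}$, that $A\leqslant B$, and that the six vertices are precisely the $\leqslant$-closed copies of $A$, giving $\left|\PK BA\right|=6$.

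For clause (a) I would first reduce it to an internal property of $B$: by the free-amalgamation property of $\left(\mathcal K_{0},\leqslant\right)$, \emph{any} proper $\leqslant$-closed $D\subsetneq B$ is realised as an intersection $\Lambda(B)\cap\Lambda'(B)$ of two distinct $\leqslant$-closed copies of $B$ in $\mathbf M_{0}$ (free-amalgamate two copies over $D$ and embed the result $\leqslant$-closedly, keeping both copies closed). Hence (a) holds iff no proper $\leqslant$-closed $D\subsetneq B$ contains two of the six copies, i.e.\ iff any two of them $\leqslant$-generate $B$—exactly what the construction is built to ensure, and what a pre-dimension check confirms. For clause (b), the case $l=2$ is precisely (a); for $l\geq 3$ I would use $\delta(B/A)=0$: if $m$ copies were glued cyclically along distinct copies of $A$, the resulting $N\subseteq\mathbf M_{0}$ would satisfy $\delta(N)\leq 0$ by submodularity of $\delta$ (each copy beyond the first contributes $\delta(B/A)=0$ at most, and closing the cycle, where the last copy meets two distinct copies of $A$, contributes at most $-\delta(A)<0$), whereas $\Lambda(B)\leqslant\mathbf M_{0}$ forces $\delta(N)\geq\delta(B)=\delta(A)>0$, a contradiction. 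With (a) and (b) established, $(A;B)$ is a tree-pair with $\left|\PK BA\right|=6$ and Theorem \ref{thm:main1} finishes the proof.

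The step I expect to be the main obstacle is meeting all four demands on $B$ at once: lying in $\mathcal K_{0}$, having \emph{exactly} six $\leqslant$-closed copies of $A$ that pairwise $\leqslant$-generate $B$, satisfying $\delta(B/A)=0$, and being rigid. The condition $\delta(B/A)=0$ makes $B$ a tight extension, which naturally produces dense and highly symmetric configurations (e.g.\ wheels)—in direct conflict with rigidity and with the requirement that the only $\leqslant$-closed copies of $A$ be the six intended ones. Resolving this tension is precisely where the freedom to attach varied, non-isomorphic $0$-minimally algebraic gadgets from Remark \ref{rem-new-44} must be used to break every symmetry while preserving the pre-dimension bookkeeping.
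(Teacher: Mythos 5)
Your overall strategy coincides with the paper's: build a tree-pair $(A;B)$ with $\left|\PK BA\right|=6$ by gluing six base copies of $A$ together with cycle gadgets from Remark \ref{rem-new-44} so that $\delta(B)=\delta(A)$, then invoke Theorem \ref{thm:main1}; your cycle-breaking $\delta$-computation for clause (b) is essentially the paper's, and your reduction of clause (a) to an internal property of $B$ via free amalgamation is a nice touch (the paper instead verifies (a) directly through its Claims B and C). The rigidity observation is also legitimate, and is in fact a point the paper glosses over. However, the core of the theorem is the \emph{existence} of such a $B$, and here your proposal has a fatal problem rather than just an acknowledged gap.

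The problem is the choice of $A$ as a single vertex combined with cycle gadgets. A vertex $v$ of $B$ fails to be $\leqslant$-closed in $B$ only if some $D\ni v$ has $\delta(D)<\delta(\{v\})=\alpha=2$, i.e.\ $\delta(D)\leq 1$. Any such $D$ cannot contain one of your six designated vertices (that would contradict their closedness), so $D$ must live entirely inside the gadgets with all attachment edges to the designated vertices deleted; but a nonempty subgraph of a disjoint union of cycles satisfies $e(D)\leq |D|$, hence $\delta(D)=2|D|-e(D)\geq |D|\geq 2$ (and $=2$ forces $|D|=1$). So no subset of $\delta\leq 1$ exists at all, \emph{every} vertex of $B$ is $\leqslant$-closed, and $\left|\PK BA\right|=|B|\gg 6$. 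This is precisely why the paper takes $A$ to be a triangle with $\delta(A)=3$: the gadgets are cycles of length $\geq 6$ attached so as to create no new triangles, so the only copies of $A$ in $B$ --- closed or not --- are the six designated ones, and the counting condition comes for free. To salvage a single-vertex $A$ you would need gadgets containing subsets with $e\geq 2|D|-1$, which destroys the bookkeeping you rely on ($\delta(X_u/\cdot)$ controlled, $B\in\mathcal{K}_0$, no intermediate closed sets) and is not addressed. Beyond this, the "tuning" of the gadgets --- the paper's explicit choice of fifteen pairwise non-isomorphic cycles with $\delta(X_u/A_{u_1}A_{u_2})=-1$ summing to $\delta(B)=\delta(A)$, and the verification that every proper subset containing two copies of $A$ has $\delta>\delta(B)$ --- is exactly the content you defer, so even granting a repaired choice of $A$ the proposal does not yet constitute a proof.
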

\begin{proof}
Note that the number $6$ (i.e. the number of copies $A$ in $B$) is only needed to obtain the tree-pair that is required in Theorem \ref{thm:main1} and proving the existence of tree-pairs that have more that $6$ copies of $A$ are similar. Define $\mathrm{P}_{2}\left(6\right):=\left\{ u\subseteq\left\{ 1,\cdots,6\right\} :\left|u\right|=2\right\}$.

\begin{claim*}[A]
 There are $A,B\in \mathcal K_0$ such that following conditions hold
\begin{enumerate}
 \item $A\in\mathcal{K}_{0}$ such that $|A|>2$, $\delta\left(A\right)=3$ and there is no $A'\subseteq A$ with $\delta\left(A'\right)=0$.
 \item $B$ contains exactly six disjoint $\leqslant$-closed isomorphic copies $A_{1},\cdots,A_{6}$ of $A$.
 \item $B=\dot{\bigcup}_{1\leq i\leq6}A_{i}\dot{\cup}\dot{\bigcup}_{u\in\mathrm{P}_{2}\left(6\right)}X_{u}$ such that the followings hold:
\begin{enumerate}
\item $\delta\left(X_u /\left(A_{u_1}A_{u_2}\right)\right)=-1$ for each $u:=\{u_1,u_2\}\in \mathrm{P}_{2}(6)$;
\item $\ensuremath{X_{u}A_{u_{1}}A_{u_{2}}\ncong X_{v}A_{v_{1}}A_{v_{2}}}$ for all $v,u\in \mathrm{P}_{2}(6)$ where $u\neq v$;
\item $\delta\left(X'/\left(A_{u_1}A_{u_2}\right)\right)\geq0$ for all $\emptyset\neq X'\subsetneqq X_u$ where $u\in \mathrm{P}_{2}(6)$.
\end{enumerate}
\item $\delta (B)=\delta (A)=3$.
\end{enumerate}
\end{claim*}
\begin{proof}[Proof of Claim A]
 For each $1\leq i\leq 6$ let $A_{i}:=\left\{ a_{1}^{i},a_{2}^{i},a_{3}^{i}\right\}$ be an $\mathfrak{L}$-structure with $\mathfrak{R}^{A_{i}}\left(a_{1}^{i},a_{2}^{i}\right)\wedge\mathfrak{R}^{A_{i}}\left(a_{2}^{i},a_{3}^{i}\right)\wedge\mathfrak{R}^{A_{i}}\left(a_{3}^{i},a_{1}^{i}\right)$ such that $A_{i}\cap A_{j}=\emptyset$ for $1\leq i\neq j\leq6$. Let $A$ denote the isomorphic type of $A_{i}$'s. It is clear that $\delta\left(A\right)=3$ and $A\in\mathcal{K}_{0}$. Fix $\zeta:\mathrm{P}_{2}\left(6\right)\rightarrow\left\{ 1,\cdots,15\right\}$ to  be an enumeration of elements of $\mathrm{P}_{2}\left(6\right)$, without repetition. For each $u\in\mathrm{P}_{2}\left(6\right)$ put $\mathfrak{m}_{u}:=6\cdot\zeta\left(u\right)$. Now let $X_{u}$ be isomorphic to $C_{\mathfrak{m}_{u}}:=\left\{ c_{1},\cdots,c_{\mathfrak{m}_{u}}\right\}$ (a cycle of length $\mathfrak{m}_{u}$; see Remark \ref{rem-new-44}). It is clear that $\delta\left(C_{\mathfrak{m}_{u}}\right)=\mathfrak{m}_{u}$. As $\mathfrak{m}_{u}\geq6$ then $X_{u}$
  does not contain any substructure isomorphic to $A$, for each $u\in\mathrm{P}_{2}\left(6\right)$. Now let $B=\dot{\bigcup}_{1\leq i\leq6}A_{i}\dot{\cup}\dot{\bigcup}_{u\in\mathrm{P}_{2}\left(6\right)}X_{u}$ be an $\mathfrak{L}$-structure such that $A_{i}$'s and $X_{u}$ 's are $\mathfrak{L}$-structures as above, for $1\leq i\leq6$ and $u\in\mathrm{P}_{2}\left(6\right)$; respectively, with the following additional relations: For each $u=\left\{ u_1,u_2\right\} \in\mathrm{P}_{2}\left(6\right)$
  we have $\mathfrak{R}^{B}\left(c_{1},a_{2}^{u_2}\right)\mathfrak{\bigwedge}_{1\leq p\leq\frac{\mathfrak{m}_{u}}{2}}\mathfrak{R}^{B}\left(c_{2p-1},a_{p^{*}}^{u_1}\right)\wedge\mathfrak{R}^{B}\left(c_{2p},a_{p^{*}}^{u_2}\right)$  where $p^{*}\in\left\{ 1,2,3\right\}$ such that $p^{*}\equiv p\left(\mbox{mod }3\right)$. One can check that in the $\mathfrak{L}$-structure $B$ we have $\delta\left(X_{u}/A_{u_1}A_{u_2}\right)=\mathfrak{m}_{u}-\left(\mathfrak{m}_{u}+1\right)=-1$
  where $u=\left\{ u_1,u_2\right\} \in\mathrm{P}_{2}\left(6\right)$. Moreover, the only isomorphic copies of $A$  are $A_{i}$'s for $1\leq i\leq6$ in $B$. Hence $\left|\PK BA\right|=6$. Furthermore as the $X_u$ contain cycles of different length, hence 3-(b) follows. One can also check that 3-(c) also follows. Finally, one can see  $\delta\left(B\right)=6\cdot3-\binom{6}{2}=18-15=3$, and $B\in\mathcal{K}_{0}$.
\end{proof}

Let $A, B\in \mathcal K_0$ be $\mathfrak L$-structures that are obtained from Claim (A). Then
\begin{claim*}[B]
\label{lem:big-intersc}
\begin{enumerate}
\item $\delta\left(B'\right)\geq3$ for every $\bigcup_{\leq i\leq6}A_{i}\subseteq B'\subseteq B$.
\item Suppose $B'\subsetneqq B$ such that it contains at least two $\leqslant$-closed
copy of $A$. Then, $\delta\left(B'\right)>3$ and $\cl{B'}=B$.
\item $A_{i}\leqslant B$, for each $i\in\left\{ 1,\cdots,6\right\} $.
\end{enumerate}
\end{claim*}
\begin{proof}[Proof of Claim B]
(1) By Condition 3-(c) of Claim (A), we have  $\delta\left(X'/\left(A_{u_1}A_{u_2}\right)\right)\geq0$,
for all $\emptyset\neq X'\subsetneqq X_{u}$. Therefore,
$\delta\left(B'\right)\geq\delta\left(B\right)=3$.

(2) If $B'$ contains all $A_{i}$'s for $1\leq i\leq6$, then at
least one $X_{u}$ does not fully contain is $B'$ where $u\in \mathrm P_2(6)$.
So it is enough to show if $X'\subsetneqq X_{u }$,
then $\delta\left(\left(B\backslash X_{u }\right)\cup X'\right)>3$.
We have the following
\[
\begin{array}{ccc}
\delta\left(B\right) & = & \delta\left(B\backslash X_{u }\right)+\delta\left(X_{u }/\left(B\backslash X_{u }\right)\right)\\
 & = & \delta\left(B\backslash X_{u }\right)+\delta\left(X_{u }/A_{u_1}A_{u_2}\right)\\
 & = & \delta\left(B\backslash X_{u }\right)-1.
\end{array}
\]
Therefore, $\delta\left(B\backslash X_{u }\right)=\delta\left(B\right)+1>3$.
Again by the assumption $\delta\left(X'/A_{u_1}A_{u_2}\right)\geq0$ and
then
\[
\begin{array}{ccc}
\delta\left(\left(B\backslash X_{u }\right)\cup X'\right) & = & \delta\left(B\backslash X_{u }\right)+\delta\left(X'/\left(B\backslash X_{u }\right)\right)\\
 & = & \delta\left(B\backslash X_{u }\right)+\delta\left(X'/\left(A_{u_1}A_{u_2}\right)\right)\\
 & > & \delta\left(B\backslash X_{u }\right)+\delta\left(X_{u }/A_{u_1}A_{u_2}\right)\\
 & > & 3.
\end{array}
\]
Suppose \textbf{$B'$} contains only $n$-many $\leqslant$-closed
copies of $A$ for $1<n<6$. Without loss of generality assume $A_{1},\cdots,A_{n}\subseteq B'$.
With abuse of notation by $i\in n$, we mean $1\leq i\leq n$. Then,
$B'=\bigcup_{i\in n}A_{i}\dot{\cup}X^{*}\dot{\cup}A^{*}$ where $X^{*}\subseteq\dot{\bigcup}_{u\in \mathrm P_2(6) }X_{u }$
and $A^{*}\subseteq\bigcup_{j\notin n}A_{j}$. Note that by our assumption
$A^{*}$ is a disjoint union of proper subsets of $A_{j}$. Let $X^{*}:=X_{1}^{*}\dot{\cup}X_{2}^{*}$
where $X_{1}^{*}$ is the union of all $X_{u }$'s
such that $X^{*}\cap X_{u }=X_{u }$
and let $X_{2}^{*}:=X^{*}\backslash X_{1}^{*}$. Note that $\delta\left(X'/\bigcup_{1\leq i\leq6}A_{i}\right)=\delta\left(X'/A_{u_1}A_{u_2}\right)$
for $X'\subseteq X_{u }$. Then, by putting $X:=\dot{\bigcup}_{u\in \mathrm P_2(6)}X_{u }$,
from Condition 3-(c) of Claim (A), it follows
\[
\delta\left(X^{*}/\bigcup_{i\in n}A_{i}\dot{\cup}A^{*}\right)=\delta\left(X_{1}^{*}/\bigcup_{i\in n}A_{i}\dot{\cup}A^{*}\right)+\delta\left(X_{2}^{*}/\bigcup_{i\in n}A_{i}\dot{\cup}A^{*}\right)\geq\delta\left(X/\bigcup_{i\in n}A_{i}\dot{\cup}A^{*}\right).
\]
Thus
\[
\begin{array}{ccc}
\delta\left(B'\right) & = & \delta\left(\bigcup_{i\in n}A_{i}\dot{\cup}X^{*}\dot{\cup}A^{*}\right)\\
 & = & \delta\left(\bigcup_{i\in n}A_{i}\dot{\cup}A^{*}\right)+\delta\left(X^{*}/\bigcup_{i\in n}A_{i}\dot{\cup}A^{*}\right)\\
 & \geq & \delta\left(\bigcup_{i\in n}A_{i}\dot{\cup}A^{*}\right)+\delta\left(X/\bigcup_{i\in n}A_{i}\dot{\cup}A^{*}\right)\\
 & \geq & \delta\left(B'X\right).
\end{array}
\]
Now we only need to calculate $\delta\left(B'X\right)$. It is easy
to see that $\delta\left(\bigcup_{i\in n}A_{i}\cup\bigcup_{u\in \mathrm P_2(6)}X_{u }\right)=n\cdot3-\binom{n}{2}>3$.
Let $X_{1}^{+}:=\bigcup_{u\in \mathrm P_2(6)}X_{u}$ and $X_{2}^{+}:=X\backslash X_{1}^{+}$.
\sloppy We have  $\delta\left(X_{u }/\bigcup_{i\in n} A_{i}\dot{\cup}A^{*}\right)>\delta\left(X_{u }/A_{u_1}A_{u_1}\right)$
when $X_{u }\subseteq X_{2}^{+}$. Therefore, it
follows that $\delta\left(X_{2}^{+}/\bigcup_{i\in n}A_{i}\dot{\cup}A^{*}\right)\geq0$.
Now
\[
\begin{array}{ccc}
\delta\left(B'X\right) & = & \delta\left(\bigcup_{i\in n}A_{i}\dot{\cup}X\dot{\cup}A^{*}\right)\\
 & = & \delta\left(\bigcup_{i\in n}A_{i}\dot{\cup}A^{*}\right)+\delta\left(X/\bigcup_{i\in n}A_{i}\dot{\cup}A^{*}\right)\\
 & = & \delta\left(\bigcup_{i\in n}A_{i}\right)+\delta\left(A^{*}\right)+\delta\left(X/\bigcup_{i\in n}A_{i}\dot{\cup}A^{*}\right)\\
 & = & \delta\left(\bigcup_{i\in n}A_{i}\right)+\delta\left(A^{*}\right)+\delta\left(X_{1}^{+}/\bigcup_{i\in n}A_{i}\dot{\cup}A^{*}\right)+\delta\left(X_{2}^{+}/\bigcup_{i\in n}A_{i}\dot{\cup}A^{*}\right)\\
 & \geq & \delta\left(\bigcup_{i\in n}A_{i}\right)+\delta\left(A^{*}\right)+\delta\left(X_{1}^{+}/\bigcup_{i\in n}A_{i}\dot{\cup}A^{*}\right)\\
 & \geq & \delta\left(\bigcup_{i\in n}A_{n}\cup X_{1}^{+}\right)+\delta\left(A^{*}\right)\\
 & \geq & \delta\left(\bigcup_{i\in n}A_{n}\cup X_{1}^{+}\right)\\
 & > & 3.
\end{array}
\]
Hence, $\cl{B'}=B$.

(3) Follows from (2) and (1).\end{proof}
\begin{claim*}[C]
\label{lem:3-inter} Let $\Lambda_{1},\Lambda_{2},\Lambda_{3}\in\PK{\mathbf{M}_{0}}B$.
Then, the followings hold
\begin{enumerate}
\item If $\Lambda_{1}\left(B\right)\cap\Lambda_{2}\left(B\right)$ contains
at least two $\leqslant$-closed copy of $A$, then $\Lambda_{1}=\Lambda_{2}$.
\item Suppose $\Lambda_{1}\left(B\right)\cap\Lambda_{2}\left(B\right)$
contains exactly one $\leqslant$-closed copy of $A$. Then, $\Lambda_{1}\left(B\right)\Lambda_{2}\left(B\right)=\Lambda_{1}\left(B\right)\otimes_{\Lambda_{1}\left(B\right)\cap\Lambda_{2}\left(B\right)}\Lambda_{2}\left(B\right)$
and $\delta\left(\Lambda_{1}\left(B\right)\Lambda_{2}\left(B\right)\right)=3$.
Furthermore, $\delta\left(\Lambda_{1}\left(B\right)\cap\Lambda_{2}\left(B\right)\right)=3$.
\end{enumerate}
\end{claim*}
\begin{proof}[Proof of Claim C]
(1) Follows from Claim B.

(2) For simplicity let $B_{1}:=\Lambda_{1}\left(B\right)$ and $B_{2}:=\Lambda_{2}\left(B\right)$.

It is easy to verify that
\[
\delta\left(B_{1}B_{2}\right)\leq\delta\left(B_{1}\right)+\delta\left(B_{2}\right)-\delta\left(B_{1}\cap B_{2}\right).
\]
The equality holds if and only if $B_{1}$ and $B_{2}$ are in the
free-amalgamation over $B_{1}\cap B_{2}$.\\
Now
\[
3\leq\delta\left(B_{1}B_{2}\right)\leq3+3-3.
\]
Hence, $B_{1}$ and $B_{2}$ are in the free-amalgamation over $B_{1}\cap B_{2}$
and $B_{1}B_{2}$ is $\leqslant$-closed. Moreover, $\delta\left(\Lambda_{1}\left(B\right)\cap\Lambda_{2}\left(B\right)\right)=3$
follows from the fact that $\Lambda_{1}\left(B\right)\cap\Lambda_{2}\left(B\right)$
is a $\leqslant$-closed set that it contains a $\leqslant$-closed
copy of $A$, and it contained in $\Lambda_{1}\left(B\right)$.
\end{proof}
Now, we want to show $\left(A;B\right)$ is a tree-pair. Suppose,
on the contrary, that a connected component of $\PK{\mathbf M_0}B$
does contain an $m$-cycle for some $m>1$ (i.e. there are distinct
$\Lambda_{1},\cdots,\Lambda{}_{m}\in\PK{\mathbf{M}_{0}}B$ such that
$\Lambda_{j},\Lambda_{j+1}$ have distance one for $1\leq j<m$,
and $\Lambda_{1}\left(B\right)\cap\Lambda_{m}\left(B\right)$ contains at least one $\leqslant$-closed copy of $A$ such that $\Lambda_{1}\left(B\right)\cap\Lambda_{m}\left(B\right)\neq \Lambda_{1}\left(B\right)\cap\Lambda_{2}\left(B\right)$). For simplicity,
let $B_{j}:=\Lambda_{j}\left(B\right)$, for $1\leq j\leq m$. We
further assume that the $m$-cycle is minimal which implies that the
intersection of each successor pair of elements of $B_{j}$'s are
distinct. By Claim C, $B_{i}\cap B_{i+1}$
contains exactly one $\leqslant$-closed copy of $A$. Now
\[
\begin{array}{ccc}
\delta\left(B_{1}\cdots B_{m}\right) & \leq & \delta\left(B_{1}\cdots B_{m-1}\right)+\delta\left(B_{m}\right)-\delta\left(\left(B_{1}\cdots B_{m-1}\right)\cap B_{m}\right)\\
 & \leq & \delta\left(B_{1}\cdots B_{m-1}\right)+3-2\cdot\delta\left(A\right)\\
 & \leq & \delta\left(B_{1}\cdots B_{m-1}\right)-3\\
 & \leq & \delta\left(B_{1}\cdots B_{m-2}\right)+\delta\left(B_{m-1}\right)-\delta\left(\left(B_{1}\cdots B_{m-2}\right)\cap B_{m-1}\right)-3\\
 & \leq & \delta\left(B_{1}\cdots B_{m-2}\right)-3\\
 & \vdots & \vdots\\
 & \leq & \delta\left(B_{1}\right)-3=0.
\end{array}
\]
This contradicts with the fact that for each $B_{i}$ we have $B_{i}\subseteq B_{1}\cdots B_{n}$
and $B_{i}$ is $\leqslant$-closed in $\mathbf{M}_{0}$. Therefore,
each connected component of $\PK{\mathbf{M}_{0}}B$ with respect to $A$ does not contain a cycle
and hence $\left(A;B\right)$ is a tree-pair.\emph{ }

\end{proof}

Epand the language $L$ by adding a binary relation $\prec$ and let $\mathfrak{L}^{+}:=\mathfrak{L}\cup\left\{ \prec\right\} $. Let $\mathcal{K}_{0}^{+}$ be
the set all $\mathfrak{L^{+}}$-expansions of structures $C\in\mathcal{K}_{0}$,
in which the relation $\prec$ is interpreted as a linear-ordering
on the universe of $C$. For $E^{\mathfrak{L}^{+}},F^{\mathfrak{L}^{+}}\in\mathcal{K}_{\alpha,\mathfrak{L}}^{+}$
we define $E^{\mathfrak{L}^{+}}\leqslant^{+}F^{\mathfrak{L}^{+}}$
if and only if $E^{\mathfrak{L}}\subseteq F^{\mathfrak{L}}$ and $E\leqslant_{\alpha}F$
where $A^{\mathfrak{L}}$ and $B^{\mathfrak{L}}$ are $\mathfrak{L}$-expansions
of graphs $A$ and $B$; respectively. Similar to the proof above,
we can show $\left(A^{\mathfrak{L}^{+}};B^{\mathfrak{L}^{+}}\right)$
in again a tree-pair where the $A^{\mathfrak{L}^{+}}\subseteq B^{\mathfrak{L}^{+}}$,
$A^{\mathfrak{L}^{+}}\upharpoonright\mathfrak{L}=A$ and $B^{\mathfrak{L}^{+}}\upharpoonright\mathfrak{L}=B$.
Therefore, the following theorem is established. Indeed, in the case
of pre-dimension functions with rational coefficients, Theorem $\ref{thm:extamngraph}$
extends to a wider class of generic structures.
\begin{thm}
\label{cor:extrem-} The automorphism groups of ordered Hrushovski
generic structures that are obtained from pre-dimension functions
with rational coefficients are not extremely amenable.\end{thm}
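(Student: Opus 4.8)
The plan is to reduce the statement to the non-amenability of the automorphism group of the ordered generic structure, exploiting the elementary fact that extreme amenability implies amenability. Indeed, if $G$ is extremely amenable then every $G$-flow $\left(G,\Gamma,X\right)$ has a fixed point $x\in X$, and the Dirac measure $\delta_x$ is a $G$-invariant Borel probability measure on $X$; hence $G$ is amenable. Consequently it suffices to show that $\Aut\left(\mathbf{M}_{\mathfrak L^+}\right)$, where $\mathbf{M}_{\mathfrak L^+}$ denotes the $\left(\mathcal{K}_0^+,\leqslant^+\right)$-generic structure, is \emph{not} amenable, and for this I would feed into the machinery developed in Section \ref{sec:3}.

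First I would record the chain of reductions already available for the ordered smooth class $\left(\mathcal{K}_0^+,\leqslant^+\right)$: by Theorem \ref{thm:cors}, $\Aut\left(\mathbf{M}_{\mathfrak L^+}\right)$ is amenable if and only if it has the convex $\leqslant^+$-Ramsey property, and by Theorem \ref{thm:main1} this property fails as soon as there exists a tree-pair $\left(A^{\mathfrak L^+};B^{\mathfrak L^+}\right)$ with $\left|\PK{B^{\mathfrak L^+}}{A^{\mathfrak L^+}}\right|=6$. Thus the entire proof reduces to exhibiting such an ordered tree-pair, which is precisely the content of the discussion preceding the statement: I would take the pair $A,B$ produced in Theorem \ref{thm:ab-} (for $\alpha=2$, the general rational case being analogous), fix an arbitrary linear order $\prec$ on the universe of $B$, and let $A^{\mathfrak L^+}\subseteq B^{\mathfrak L^+}$ be the resulting $\mathfrak L^+$-expansions.

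The next step is to verify that this ordered pair retains the two required features. Since $\leqslant^+$ is defined purely through the $\mathfrak L$-reducts, the relation $A^{\mathfrak L^+}\leqslant^+ B^{\mathfrak L^+}$ is immediate from $A\leqslant B$, and the pre-dimension computations of Claims B and C of Theorem \ref{thm:ab-}---which decide exactly when an intersection $\Lambda\left(B\right)\cap\Lambda'\left(B\right)$ can contain a $\leqslant$-closed copy of $A$ and which forbid $m$-cycles---involve only $\delta$ and the $\mathfrak L$-reduct. As the reduct of $\mathbf{M}_{\mathfrak L^+}$ is the unordered generic $\mathbf{M}_0$ equipped with a generic order, every ordered $\leqslant^+$-embedding of $B^{\mathfrak L^+}$ projects to a $\leqslant$-embedding of $B$ into $\mathbf{M}_0$, an ordered copy of $A^{\mathfrak L^+}$ inside an intersection projects to a copy of $A$, and an $m$-cycle upstairs projects to one downstairs. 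Since Theorem \ref{thm:ab-} rules out cycles downstairs, these carry over verbatim, giving the tree-pair property for $\left(A^{\mathfrak L^+};B^{\mathfrak L^+}\right)$.

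The step I expect to require the most care, and the only one that genuinely uses the shape of $A$, is the count $\left|\PK{B^{\mathfrak L^+}}{A^{\mathfrak L^+}}\right|=6$: passing to the ordered language could a priori split the six unordered copies $A_1,\dots,A_6$ of $A$ into several distinct ordered isomorphism types, lowering the number of order-preserving $\leqslant^+$-embeddings of a \emph{fixed} $A^{\mathfrak L^+}$ below $6$. This is settled by the symmetry of $A$: since $A$ is a triangle $K_3$, its automorphism group acts as the full symmetric group on its three vertices, so every linear order on $A$ yields the \emph{same} ordered structure up to isomorphism. Hence each $A_i$, with the order induced from $B^{\mathfrak L^+}$, is isomorphic to the single ordered type $A^{\mathfrak L^+}$; no new copies are created by the expansion, since the cycles $X_u$ (of length $\geq 6$) contain no triangle; and the count remains exactly $6$. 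With the ordered tree-pair established, Theorem \ref{thm:main1} gives the failure of the convex $\leqslant^+$-Ramsey property, Theorem \ref{thm:cors} gives non-amenability of $\Aut\left(\mathbf{M}_{\mathfrak L^+}\right)$, and the opening implication gives that it is not extremely amenable, as required.
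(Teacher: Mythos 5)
Your proof follows essentially the same route as the paper: it reduces non-extreme-amenability to non-amenability of the ordered generic structure, and obtains the latter from Theorems \ref{thm:cors} and \ref{thm:main1} by lifting the tree-pair $\left(A;B\right)$ of Theorem \ref{thm:ab-} to the order-expanded class $\left(\mathcal{K}_{0}^{+},\leqslant^{+}\right)$. You are in fact more careful than the paper on a point it leaves implicit, namely that the six $\leqslant$-closed copies of the triangle $A$ survive as exactly six $\leqslant^{+}$-embeddings of a single ordered isomorphism type $A^{\mathfrak{L}^{+}}$, which you correctly settle using the full symmetry of $K_{3}$ together with the fact that the cycles $X_{u}$ contain no triangles.
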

\begin{rem}
In \cite{MacS}, it was asked whether there are any links between
the \emph{extension property} (known also as \emph{Hrushovski property}
in \cite{MR2308230,Ivan}) of the Fra\"iss\'e class and the extremely
amenability of the automorphism group of the Fra\"iss\'e-limit. By
a result of the first author in \cite{Zthesis}, the class $\mathcal{K}_{0}$
does not have the extension property. Existence of certain kind of
tree-pairs and the extension property of the class $\mathcal{K}_{0}$
seems to be related.
\end{rem}

\subsection{Collapsed ab-initio generic structures}

In \cite{Hrunew}, in order to obtain a strongly minimal structure, Hrushovski restricts the uncollapsed ab-initio class $\mathcal{K}_{\alpha}^{+}$
to a smaller class $\mathcal{K}_{\alpha}^{\mu}$, using a finite-to-one
function $\mu$ over the 0-minimally algebraic elements. We have already mentioned the class $\mathcal K_\alpha^\mu$ and the $\left(\mathcal K_\alpha^\mu,\leqslant_\alpha\right)$-generic structure $\mathbf{M}_\alpha^\mu$ in Subsection \ref{sub:ab}.  The structure $\mathbf{M}_\alpha^\mu$ is called the \emph{collapsed} ab-initio generic structure. It has to be indicated that the relation  $\mathfrak R$ in the language $\mathfrak L$ that Hrushovski considers is a ternary symmetric relation and $\alpha=1$. Note that
\textquotedblleft collapsing\textquotedblright{} with the $\mu$ function
is applicable just for ab-initio generic structures which are obtained
from pre-dimension functions with rational coefficients. Similar to Theorem \ref{thm:ab-}, one can show there are $A,B\in\mathcal{K}_{\alpha}^{\mu}$
such that $\left|\PK BA\right|=6$ and  $\left(A;B\right)$ form a tree-pair. Hence,

\begin{thm}
The automorphism groups of collapsed ab-initio generic structures and specially the
automorphism group of Hrushovski's strongly minimal set are not amenable.
\end{thm}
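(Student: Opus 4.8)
The plan is to reduce everything, exactly as in the uncollapsed case, to Theorem \ref{thm:main1} and Theorem \ref{thm:cors}. It suffices to exhibit, inside the \emph{collapsed} class $\left(\mathcal{K}_\alpha^\mu,\leqslant_\alpha\right)$, a pair $A\leqslant_\alpha B$ with $\left|\PK BA\right|=6$ forming a tree-pair. Granting this, Theorem \ref{thm:main1} shows $\Aut\left(\mathbf{M}_\alpha^\mu\right)$ fails the convex $\leqslant$-Ramsey property with respect to $\left(\mathcal{K}_\alpha^\mu,\leqslant_\alpha\right)$, and Theorem \ref{thm:cors} then gives non-amenability. Since Hrushovski's strongly minimal set is the generic structure $\mathbf{M}_1^\mu$ for a ternary symmetric $\mathfrak{R}$ and $\alpha=1$, it is covered by the same scheme once the construction is transcribed into that signature.

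First I would reproduce Claim~(A) of Theorem \ref{thm:ab-} inside $\mathcal{K}_\alpha^\mu$: take $A$ a small $\leqslant_\alpha$-closed structure with $\delta(A)>0$ and no proper subset of predimension $0$, assemble six disjoint copies $A_1,\dots,A_6$, and glue a distinct auxiliary cycle $X_u$ to each pair $\{A_{u_1},A_{u_2}\}$ so that $\delta(X_u/A_{u_1}A_{u_2})=-1$ and $X_uA_{u_1}A_{u_2}\ncong X_vA_{v_1}A_{v_2}$ for $u\neq v$. In the ternary $\alpha=1$ setting, triangles and graph-cycles are replaced by the obvious $3$-uniform analogues, and the predimension bookkeeping is formally identical. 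The genuinely new point, relative to Theorem \ref{thm:ab-}, is that $B$ must lie in the \emph{collapsed} class. Because $\mu$ is finite-to-one with $\mu(A',B')\geq\delta(A')\geq 1$ for every admissible pair, and because $B$ is a single finite structure, it is enough to choose the cycle lengths $\mathfrak{m}_u$ large and pairwise distinct so that every $0$-minimally algebraic configuration occurring in $B$ occurs with multiplicity below its $\mu$-bound; this places $A,B\in\mathcal{K}_\alpha^\mu$.

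Next I would re-verify the analogues of Claims~(B) and~(C): that each $A_i\leqslant_\alpha B$, that any $B'\subsetneq B$ containing two $\leqslant$-closed copies of $A$ has $\delta(B')>\delta(B)$ and hence $\cl{B'}=B$, and that two $\leqslant$-closed copies $\Lambda_1(B),\Lambda_2(B)$ meeting in exactly one copy of $A$ are freely amalgamated with $\delta(\Lambda_1(B)\Lambda_2(B))=\delta(B)$. These are pure predimension computations, and they go through verbatim because $\delta_\alpha$ and $\leqslant_\alpha$ are inherited unchanged from $\mathcal{K}_\alpha^+$. With them in hand, the cycle-counting inequality at the end of the proof of Theorem \ref{thm:ab-}, which forces a putative $m$-cycle of embeddings to drop the predimension below $\delta(B_1)$ and contradicts $\leqslant$-closedness, applies without change, so $(A;B)$ is again a tree-pair.

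The main obstacle is the $\mu$-bookkeeping, precisely because the collapsed class has only AP rather than free amalgamation, so Lemma \ref{lem:free-3} is unavailable. Theorem \ref{thm:main1} is stated under AP and already relies on Remark \ref{rem:28}, so it applies as soon as the tree-pair is present; what I must still check is that the finite tree-amalgams of copies of $B$ glued along single $\leqslant$-closed copies of $A$ are actually realized in $\mathbf{M}_\alpha^\mu$, i.e.\ that they remain in $\mathcal{K}_\alpha^\mu$. By the analogue of Claim~(C) the gluing over each shared copy of $A$ is free, and free amalgamation over a base $A$ with $\delta(A)>0$ — hence not a $0$-algebraic base — introduces no $0$-minimally algebraic set over a small base beyond those already present in the individual factors, so the $\mu$-bounds propagate along the tree. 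Genericity of $\mathbf{M}_\alpha^\mu$ then supplies the required embeddings, Theorem \ref{thm:main1} yields the failure of the convex $\leqslant$-Ramsey property, and Theorem \ref{thm:cors} gives non-amenability for every rational $\alpha$, in particular for $\mathbf{M}_1^\mu$, Hrushovski's strongly minimal set.
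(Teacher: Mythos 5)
Your proposal follows exactly the route the paper takes: it reduces the statement to exhibiting a tree-pair $\left(A;B\right)$ with $\left|\PK{B}{A}\right|=6$ inside $\left(\mathcal{K}_\alpha^\mu,\leqslant_\alpha\right)$, built as in Theorem \ref{thm:ab-}, and then invokes Theorems \ref{thm:main1} and \ref{thm:cors}. The paper itself gives no more detail than the assertion that, similar to Theorem \ref{thm:ab-}, one can find such $A,B\in\mathcal{K}_{\alpha}^{\mu}$ forming a tree-pair, so your extra care with the $\mu$-bookkeeping (verifying that $B$ and the tree-amalgams of its copies remain in the collapsed class) merely supplies detail the paper omits.
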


\subsection{Remaining cases}

In the previous subsection, we only dealt with ab-initio generic structures
that are obtained from pre-dimesnion functions with rational coefficients.
For ab-initio generic structures that are obtained from pre-dimension
functions with irrational coefficients, the amenability of their automorphism
group is left unanswered in this manuscript.  This includes the $\omega$-categorical
pseudo-plane constructed by Hrushovski (see \cite{Wag1}) and $\mathbf M_\alpha^{\mathsf f}$ that we have mentioned in Subsection \ref{sub:ab}. There are also other
interesting variants of Hrushovski's construction that are simple (see \cite{MR1982948}). In all these cases tree-pairs do not exist. However, it seems plausible
to modify the techniques of the present paper to assign a coloring
function, to a pair $\left(A;B\right)$ whose graph might have cycles,
that corresponds to a matrix which does not satisfy the convex Ramsey
condition. As we mentioned before,  David M. Evans shows, using a different method, the automorphism groups of generic structures that are obtained from pre-dimension functions with irrational coefficients and the $\omega$-categorical generic structures are not amenable.

\bibliographystyle{amsplain}
\bibliography{ALL}

\providecommand{\bysame}{\leavevmode\hbox to3em{\hrulefill}\thinspace}
\providecommand{\MR}{\relax\ifhmode\unskip\space\fi MR }
\providecommand{\MRhref}[2]{%
  \href{http://www.ams.org/mathscinet-getitem?mr=#1}{#2}
}
\providecommand{\href}[2]{#2}
\begin{thebibliography}{10}

\bibitem{Balshi}
John~T. Baldwin and Niandong Shi, \emph{Stable generic structures}, Ann. Pure
  Appl. Logic \textbf{79} (1996), no.~1, 1--35. \MR{1390325 (97c:03103)}

\bibitem{E:pre}
David~M. Evans, \emph{$\aleph_0$-categorical structures with a predimension},
  Annals of Pure and Applied Logic (2002), no.~116, 157--186.

\bibitem{Zthesis}
Zaniar {Ghadernezhad}, \emph{{Automorphism groups of generic structures.}},
  M\"unster: Univ. M\"unster, Mathematisch-Naturwissenschaftliche Fakult\"at,
  Fachbereich Mathematik und Informatik (Diss.), 2013 (English).

\bibitem{MR1462612}
Wilfrid Hodges, \emph{A shorter model theory}, Cambridge University Press,
  Cambridge, 1997. \MR{1462612 (98i:03041)}

\bibitem{Hrunew}
Ehud Hrushovski, \emph{A new strongly minimal set}, Ann. Pure Appl. Logic
  \textbf{62} (1993), no.~2, 147--166, Stability in model theory, III (Trento,
  1991). \MR{1226304 (94d:03064)}

\bibitem{Ivan}
A.~Ivanov, \emph{An $\omega$-categorical structure with amenable automorphism
  group}, ArXiv (2014).

\bibitem{KPT}
A.~Kechris, V.~Pestov, and S.~Todorcevic, \emph{Fra\"iss\' e limites, ramsey
  theory, and topological dynamics of automorphism groups}, GAFA (2005),
  no.~15, 106 --189.

\bibitem{MR2308230}
Alexander~S. Kechris and Christian Rosendal, \emph{Turbulence, amalgamation,
  and generic automorphisms of homogeneous structures}, Proc. Lond. Math. Soc.
  (3) \textbf{94} (2007), no.~2, 302--350. \MR{2308230 (2008a:03079)}

\bibitem{KueLas}
D.~W. Kueker and M.~C. Laskowski, \emph{On generic structures}, Notre Dame J.
  Formal Logic \textbf{33} (1992), no.~2, 175--183. \MR{1167973 (93k:03032)}

\bibitem{MacS}
Dugald Macpherson, \emph{A survey of homogeneous structures}, Discrete Math.
  \textbf{311} (2011), no.~15, 1599--1634. \MR{2800979 (2012e:03063)}

\bibitem{MR1982948}
Massoud Pourmahdian, \emph{Simple generic structures}, Ann. Pure Appl. Logic
  \textbf{121} (2003), no.~2-3, 227--260. \MR{1982948 (2004d:03079)}

\bibitem{Prom}
Hans~J\"urgen Pr\"omel, \emph{Ramsey theory for discrete structures}, Springer,
  2013.

\bibitem{TatchMoore2011}
Justin Tatch~Moore, \emph{Amenability and ramsey theory}, Fund. Math.
  \textbf{220} (2013), 263--280.

\bibitem{Wag1}
Frank~O. Wagner, \emph{Relational structures and dimensions}, Automorphisms of
  first-order structures, Oxford Sci. Publ., Oxford Univ. Press, New York,
  1994, pp.~153--180. \MR{1325473}

\end{thebibliography}

\end{document}